\newcommand{\ZZ}{{\mathbb Z}}
\newcommand{\Z}{{\mathbb Z}}
\newcommand{\C}{{\mathbb C}}
\newcommand{\R}{{\mathbb R}}
\newcommand{\QQ}{{\mathbb Q}}
\newcommand{\T}{{\mathbb T}}
\newcommand{\TP}{{\mathbb{T}P}}
\newcommand{\GL}{{\text{GL}}}
\newcommand{\CC}{{\mathcal C}}
\newcommand{\F}{{\mathcal F}}
\newcommand{\G}{{\mathcal G}}
\newcommand{\N}{{\mathcal N}}
\newcommand{\Q}{{\mathcal Q}}
\newcommand{\Ker}{\text{Ker}}
\DeclareMathOperator{\rank}{rank}
\DeclareMathOperator{\relint}{int}
\DeclareMathOperator{\Hom}{Hom}
\DeclareMathOperator{\Mod}{Mod}
\DeclareMathOperator{\id}{id}
\DeclareMathOperator{\Int}{int}
\DeclareMathOperator{\sed}{sed}
\newtheorem{thm}{Theorem}[section]
\newtheorem{definition}[thm]{Definition}
\newtheorem{prop}[thm]{Proposition}
\newtheorem{proposition}[thm]{Proposition}
\newtheorem{lemma}[thm]{Lemma}
\newtheorem{cor}[thm]{Corollary}
\newtheorem{remark}[thm]{Remark}
\newtheorem{corollary}[thm]{Corollary}
\theoremstyle{definition}
\theoremstyle{definition}
\newtheorem{example}[thm]{Example}
\newtheorem{ques}[thm]{Question}
 \numberwithin{equation}{section}
 \newcommand\xqed[1]{%
  \leavevmode\unskip\penalty9999 \hbox{}\nobreak\hfill
  \quad\hbox{#1}}
\newcommand\demo{\xqed{$\triangle$}}
\tikzset{%
  add/.style args={#1 and #2}{to path={%
 ($(\tikztostart)!-#1!(\tikztotarget)$)--($(\tikztotarget)!-#2!(\tikztostart)$)%
  \tikztonodes}}
} 
\newcommand{\comment}[1]{}
\begin{document}
\title[]{Lefschetz section theorems for tropical hypersurfaces}

\author[Charles Arnal]{Charles Arnal}
\address{Charles Arnal, Univ. Paris 6, IMJ-PRG, France.}
\email{charles.arnal@imj-prg.fr} 
\author[Arthur Renaudineau]{Arthur Renaudineau}
\address{Arthur Renaudineau, Univ. Lille, CNRS, UMR 8524 -
Laboratoire Paul Painlev\'e, F-59000 Lille, France.}
\email{arthur.renaudineau@univ-lille.fr} 
\author[Kristin Shaw]{Kristin Shaw}
\address{Kristin Shaw, 
University of Oslo, Oslo, Norway.}
\email{krisshaw@math.uio.no}

\begin{abstract}
We establish variants of the Lefschetz hyperplane section theorem for the integral  tropical homology groups of tropical hypersurfaces of toric varieties.
It follows from these theorems that the integral tropical homology groups of non-singular tropical hypersurfaces which are compact or contained in $\R^n$ are torsion free. We prove a relationship between 
the coefficients of the $\chi_y$ genera of complex hypersurfaces in toric varieties and Euler characteristics 
of the integral tropical cellular chain complexes of their tropical counterparts. It follows that the integral  tropical homology groups give  the  Hodge numbers of compact non-singular hypersurfaces of complex toric varieties. 
Finally for tropical hypersurfaces in certain affine toric varieties, we relate the ranks of their tropical homology groups to the Hodge-Deligne numbers of their complex counterparts.
\end{abstract}

\maketitle

\tableofcontents
\section{Introduction}

Tropical homology is a homology theory with non-constant coefficients for polyhedral spaces. Itenberg, Katzarkov, Mikhalkin, and Zharkov,   show that under 
suitable conditions, the  $\QQ$-tropical Betti numbers of the tropical limit of a family  of complex projective varieties are equal to the corresponding  Hodge numbers of a generic member of the family \cite{IKMZ}.  
This explains the particular interest of these homology groups in tropical and complex algebraic geometry. 

In this paper we consider the integral versions of tropical homology groups for hypersurfaces in toric varieties. The $(p,q)$-th tropical homology group of a polyhedral complex $Z$  is denoted $H_q(Z; \F^{Z}_p)$ and the Borel-Moore homology group is denoted $H_q^{BM}(Z; \F^Z_p)$.  To avoid ambiguity we will often also refer to $H_q(Z; \F^{Z}_p)$ as a standard tropical homology group. When a polyhedral complex $Z$ is compact  then $H_q(Z;  \F^Z_p) = H^{BM}_q(Z;  \F^Z_p)$. 
These homology groups are defined in  Section \ref{sec:prelim} as the cellular  tropical homology groups \cite{MZ} and \cite{KSW}. For a comparison between cellular homology and singular homology, see Remark \ref{rem:invariance}.

  Our main goal is to prove that these homology groups are torsion free for a compact non-singular tropical hypersurface in a compact non-singular tropical toric variety. 
The road to the proof of this statement is quite similar to the one followed  to prove that the integral homology of a complex projective hypersurface is torsion free. Namely,  in order to prove that the integral tropical homology groups are without torsion, we first establish a tropical variant of the Lefschetz hyperplane section theorem. Ultimately however, the techniques used in the proofs are quite different from the  complex setting, since we are working with polyhedral spaces instead of algebraic varieties. Also  notice that in the tropical version of the  Lefschetz section theorem stated below the tropical hypersurface is not required to be compact.
However, the tropical hypersurface is required to be  combinatorially  ample in the tropical toric variety, see Definition \ref{def:nondeg}. For the notion of cellular pair see Definition \ref{def:cellcomplex}.


\begin{thm}\label{thm:lef}

Let $X$ be a non-singular and combinatorially ample tropical hypersurface of an $n+1$ dimensional non-singular tropical toric variety $Y$. 
Then 
the map induced by  inclusion
$$i_{*} \colon H^{BM}_q(X;  \mathcal{F}_p^X) \to H^{BM}_q(Y; \mathcal{F}_p^Y)$$
is an isomorphism when $p+q < n$ and a surjection when $p +q = n$.

If additionally, the tropical hypersurface $X$ has full-dimensional Newton polytope and the pair $(Y, X)$ is a cellular pair, then 
the map  induced by  inclusion
$$i_{*} \colon H_q(X;  \mathcal{F}_p^X) \to H_q(Y; \mathcal{F}_p^Y)$$ is an isomorphism when $p+q < n$ and a surjection when $p +q = n$.
\end{thm}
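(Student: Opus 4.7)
The plan is to analyze both assertions by means of the long exact sequences of the pair $(Y,X)$ in, respectively, Borel--Moore and standard tropical homology. After refining the polyhedral structure of $Y$ so that $X$ is a subcomplex, the Borel--Moore long exact sequence
\begin{equation*}
\cdots \to H_q^{BM}(X;\F_p^X) \to H_q^{BM}(Y;\F_p^Y) \to H_q^{BM}(Y,X;\F_p^Y) \to H_{q-1}^{BM}(X;\F_p^X) \to \cdots
\end{equation*}
reduces the first claim to proving that $H_q^{BM}(Y,X;\F_p^Y)=0$ for $p+q\leq n$. By excision, these relative groups compute the Borel--Moore tropical homology of the open complement $U=Y\setminus X$ with the restricted coefficient system, so the whole problem becomes a vanishing statement for $H^{BM}_\ast(U;\F_p)$.

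To establish this vanishing, I would carry out a local-to-global analysis. At every cell $\sigma$ of $Y$ with $\sigma\not\subset X$, the combinatorial ampleness hypothesis should ensure that the $\Star$ of $\sigma$ in the pair $(Y,X)$ is combinatorially modelled by an affine tropical toric variety together with a combinatorially ample tropical hypersurface. In this local model, one can write down the cellular tropical chain complex of the complement directly from the definition of $\F_p$, and establish vanishing of $H^{BM}_q$ for $p+q\leq n$ by induction on the dimension of $Y$, the base case being a single tropical hyperplane in $\RR^{n+1}$, whose Borel--Moore homology with $\F_p$ coefficients is concentrated in top degree. These uniform local vanishings can then be assembled into the global statement, either via a local-to-global spectral sequence indexed by the faces of $Y$, or by filtering $Y$ by its toric strata and applying the five lemma stratum by stratum.

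For the second assertion of the theorem, the same long-exact-sequence strategy applies in standard tropical homology. The additional hypotheses would enter as follows: the assumption that $X$ has full-dimensional Newton polytope forces $X$ to meet every closed toric stratum of $Y$ in a combinatorially ample hypersurface of that stratum, which is what is required for the inductive step on strata to go through; the cellular pair hypothesis guarantees that the cellular chain complex of $(Y,X)$ computes the relative homology correctly, so that excision is available in this setting as well. I expect the main obstacle to be the local vanishing step: one needs tight control over the behaviour of $\F_p$ on the complement of a combinatorially ample hypersurface inside an affine tropical toric variety, uniformly in the face $\sigma$, and the combinatorial ampleness condition has to be exploited essentially in order to reduce the local model to one in which such a computation is tractable.
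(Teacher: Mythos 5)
There is a genuine gap, and it sits at the very first step. The long exact sequence you write down does not exist: its first term carries the coefficient cosheaf $\F_p^X$ while the remaining terms carry $\F_p^Y$, and these are \emph{different} coefficient systems on $X$. The intrinsic multi-tangent cosheaf $\F_p^X(\tau)=\sum_{\tau\subset\sigma\subset X}\bigwedge^p T_{\Z}(\sigma)$ is in general a proper submodule of the restricted ambient cosheaf $\F_p^Y|_X(\tau)=\bigwedge^p T_{\Z}(Y_\rho)$ (already for a line in $\TP^2$ one has $\F_1^X(\tau)=0$ while $\F_1^{\TP^2}(\tau)\cong\Z$ at the boundary vertices). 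The only pair sequence available compares $H_q^{\sqbullet}(X;\F_p^Y|_X)$ with $H_q^{\sqbullet}(Y;\F_p^Y)$, with third term the homology of the quotient cosheaf $\Q_p=\F_p^Y/\F_p^Y|_X$ supported on the complement; that part of your plan (excision to the complement, local vanishing on stars of cells not in $X$, reduction to the standard hyperplane) is essentially the paper's Proposition \ref{prop:Qp}, and it yields vanishing for all $q\leq n$ \emph{independently of $p$}. Had your sequence existed, you would therefore have ``proved'' that $i_*$ is an isomorphism for all $q<n$ regardless of $p$, which is false; the genuine Lefschetz range $p+q<n$ cannot come out of this one comparison.

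What is missing is the second comparison, between $\F_p^X$ and $\F_p^Y|_X$ on $X$ itself, via the cokernel cosheaf $\N_p$ in $0\to\F_p^X\to\F_p^Y|_X\to\N_p\to 0$. This is where all the $p$-dependence enters and where most of the work in the paper lies: one shows $\N_p(\sigma)=0$ whenever $p\leq n-\dim\sigma-\sed(\sigma)$ (using the local product structure of a non-singular hypersurface near the boundary, Lemma \ref{lem:KuennethFp}), and then runs a spectral sequence for the filtration of $C_\bullet^{\sqbullet}(X;\N_p)$ by sedentarity to conclude $H_q^{\sqbullet}(X;\N_p)=0$ for $p+q\leq n$. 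Your reading of the auxiliary hypotheses is also off as a consequence: full-dimensionality of the Newton polytope is not needed to make strata-wise ampleness work, but to guarantee that a compact face of nonzero sedentarity is contained in a \emph{compact} sedentarity-zero face, which is what lets the sedentarity spectral sequence argument go through for standard (as opposed to Borel--Moore) homology. Without the $\N_p$ step your outline cannot produce the stated range, so the argument as proposed does not close.
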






Tropical homology with real or rational  coefficients is the homology of the cosheaf of  real vector spaces $\F_p \otimes \R$ or $\F_p \otimes \QQ$, respectively.
 Theorem \ref{thm:lef} holds in the case of tropical homology with real coefficients for  a  singular tropical hypersurface $X$ in a tropical toric variety $Y$ which is combinatorially ample and also proper, see Definition \ref{def:proper}.
%
%
Below we state the theorems in the case of real coefficients. 
\begin{thm}\label{thm:singularcase}
Let $X$ be a combinatorially ample tropical hypersurface of an $n+1$ dimensional non-singular tropical toric variety $Y$ that is proper in $Y$. 
Then the maps induced by  inclusion
$$i_{*} \colon H^{BM}_q(X;  \mathcal{F}_p^X \otimes \R) \to H^{BM}_q(Y; \mathcal{F}_p^Y \otimes \R )$$
are isomorphisms when $p+q < n$ and surjections when $p +q = n$.
If additionally, the tropical hypersurface $X$ has full-dimensional Newton polytope and the pair $(Y, X)$ is a cellular pair, then the maps induced by  inclusion
$$i_{*} \colon H_q(X;  \mathcal{F}_p^X) \to H_q(Y; \mathcal{F}_p^Y)$$ are isomorphisms when $p+q < n$ and surjections when $p +q = n$.
\end{thm}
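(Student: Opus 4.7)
The strategy is to deduce Theorem~\ref{thm:singularcase} from Theorem~\ref{thm:lef} by a smoothing argument, exploiting the fact that, unlike the integral theory, tropical homology with real coefficients is insensitive to certain combinatorial refinements. Concretely, I will approximate the singular hypersurface $X$ by non-singular tropical hypersurfaces $X_\epsilon \subset Y$ to which Theorem~\ref{thm:lef} applies, and then show that the cellular chain complexes of $X$ and $X_\epsilon$ with coefficients in $\F_p \otimes \R$ are quasi-isomorphic.

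For the construction of $X_\epsilon$, I would refine the regular polyhedral subdivision of the Newton polytope of $X$ to a regular unimodular triangulation $\tilde{\mathcal S}$ (such a refinement exists because the Newton polytope is a rational polytope), and take $X_\epsilon$ to be the tropical hypersurface defined by a tropical polynomial close to that of $X$ but with dual subdivision $\tilde{\mathcal S}$. Then $X_\epsilon$ is non-singular, has the same Newton polytope and ambient toric variety $Y$ as $X$, and inherits combinatorial ampleness and properness from $X$ by stability of these conditions under sufficiently small perturbations of the defining coefficients. Theorem~\ref{thm:lef} applied to $X_\epsilon$ thus gives the required isomorphism or surjection for $i_*\colon H_q^{BM}(X_\epsilon;\F_p^{X_\epsilon}) \to H_q^{BM}(Y;\F_p^Y)$ in the range $p+q \le n$, and likewise for the standard homology once one checks that the cellular pair hypothesis transfers from $(Y,X)$ to $(Y,X_\epsilon)$.

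To descend from $X_\epsilon$ back to $X$, I would construct a natural cellular map $\varphi_\epsilon \colon X_\epsilon \to X$ arising from the degeneration $\epsilon \to 0$ and prove that it induces isomorphisms $H_q^{BM}(X_\epsilon;\F_p^{X_\epsilon}\otimes\R) \to H_q^{BM}(X;\F_p^X\otimes\R)$, and similarly for standard homology. Combined with Theorem~\ref{thm:lef} for $X_\epsilon$ and the natural diagram relating the inclusions of $X$ and $X_\epsilon$ into $Y$ via $\varphi_\epsilon$, a routine diagram chase then yields Theorem~\ref{thm:singularcase}. The main obstacle lies precisely in establishing this quasi-isomorphism. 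At each singular cell $\sigma$ of $X$, the preimage $\varphi_\epsilon^{-1}(\sigma)$ is combinatorially richer than $\sigma$, and one must verify that this local refinement is a homology equivalence for the coefficient cosheaves with real coefficients. The hypothesis that $X$ is proper in $Y$ is what should enforce this: it constrains the local structure of $X$ along each stratum of $Y$ in such a way that the Poincar\'e-type duality statements which hold integrally in the non-singular case continue to hold rationally here, making each local refinement acyclic over $\R$ even though integral torsion may appear.
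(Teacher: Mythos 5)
Your proposal takes a genuinely different route from the paper, and it has two concrete gaps that I do not see how to repair. First, the non-singular approximation $X_\epsilon$ need not exist: a non-singular tropical hypersurface must be dual to a \emph{primitive} (unimodular) regular triangulation of its Newton polytope, and for lattice polytopes of dimension $\geq 3$ a given regular subdivision need not admit any unimodular refinement at all (Reeve simplices admit no unimodular triangulation whatsoever; only some dilate $c\Delta$ is guaranteed to). So you cannot in general produce a non-singular $X_\epsilon$ with the same Newton polytope, and changing the Newton polytope destroys the comparison with $Y$ and the cellular pair hypothesis. Second, the claimed quasi-isomorphism $C_\bullet(X_\epsilon;\F_p\otimes\R)\simeq C_\bullet(X;\F_p\otimes\R)$ is false: it would force all tropical Betti numbers of $X$ and $X_\epsilon$ to agree, but already for plane cubics the smooth tropical cubic (dual to a primitive triangulation of the triangle with vertices $(0,0),(3,0),(0,3)$) has $\dim H_1(\cdot;\F_0\otimes\R)=1$ while the singular one dual to the trivial subdivision is a one-vertex fan with $H_1=0$. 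The degeneration map collapses cycles, so at best one could hope for isomorphisms in the range $p+q<n$ and a surjection at $p+q=n$ --- but proving exactly that is the content of the theorem, so the reduction is circular in the only range where it could work.

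The paper's proof avoids approximation entirely: it tensors the two cosheaf exact sequences (\ref{seq}) and (\ref{seq:N}) with $\R$ and re-runs the integral argument directly on the singular $X$. The one point that must be re-checked is the rank count underlying Lemma \ref{lemma:vanishingN_p}: for a vertex $v$ of a singular hypersurface one still has $\F_p^X(v)\otimes\R=\bigwedge^p\R^{n+1}$ for $p\leq n$ (and $0$ for $p>n$), so $\chi_v(\lambda)$ and, via the K\"unneth formula of Lemma \ref{lem:KuennethFp}, the polynomials $\chi_\tau(\lambda)$ are unchanged from the non-singular case; hence $\N_p\otimes\R$ vanishes in the same range and Lemmas \ref{lem:vanishingFpgamma}, \ref{lem:vanishingNpgamma} and Propositions \ref{prop:Qp}, \ref{prop:vanshingHNp} go through over $\R$ (the vanishing for basic open sets being supplied by \cite[Section 4]{JSS}). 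Your instinct that properness and real coefficients are what make the singular case tractable is correct, but the mechanism is this stalk-level rank equality, not a smoothing.
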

%
%
%

Adiprasito and Bj\"orner established tropical variants of the Lefschetz hyperplane section theorem in \cite{AdBj}. 
Their theorems relate the tropical homology with {real coefficients} of a non-singular tropical variety $X$ contained in a tropical toric variety to the tropical homology groups of 
the intersection of $X$ with a so-called ``chamber complex". A chamber complex is a codimension one polyhedral complex in a tropical toric variety whose complement consists of pointed polyhedra, in particular it need not to be balanced.  Adiprasito and Bj\"orner first establish some topological properties of filtered geometric lattices and then use Morse theory to prove their tropical versions of the Lefschetz theorem. 
The  proof we  present here does not utilise Morse theory but instead proves vanishing theorems for the homology of cosheaves that arise in short exact sequences relating the cosheaves for the tropical homology of $X$ and the ambient space. 
Furthermore, we relate the integral tropical homology groups of a non-singular  tropical hypersurface with the integral tropical homology groups of the ambient toric variety.
Another result which follows from the Lefschetz  section theorem for the integral tropical homology groups of hypersurfaces is that under the correct hypotheses on the ambient space these homology groups are torsion free. At the end of the introduction we discuss the implications of torsion freeness  to recent results on the Betti numbers of real algebraic hypersurfaces arising from patchworking.

The   tropical (co)homology groups with integral coefficients  of a non-singular tropical hypersurface  satisfy a variant of Poincar\'e duality  \cite{JRS}.
Using this we deduce in  Section \ref{section:torsionfree} that the tropical homology groups of a non-singular tropical hypersurface in a non-singular tropical toric variety which satisfy the assumptions below are torsion free, as long as the homology of the toric variety is also torsion free.
 
%

\begin{thm}\label{thm:torsionfree}
Let $X$ be a non-singular tropical hypersurface in a non-singular tropical toric variety $Y$ such that  $(Y,X)$ is a cellular pair and the Newton polytope of $X$ is full dimensional. If the tropical homology groups of $Y$ are torsion free, then both the Borel-Moore and standard tropical homology groups of $X$ are also torsion free.
\end{thm}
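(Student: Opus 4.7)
The plan is to combine Theorem~\ref{thm:lef} with the integral tropical Poincar\'e duality for non-singular tropical hypersurfaces from \cite{JRS} and the universal coefficient theorem, so as to bootstrap torsion freeness from the low-degree Lefschetz range to the remaining bidegrees.

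First, for $p+q < n$, Theorem~\ref{thm:lef} provides isomorphisms
\[
H_q^{BM}(X;\F_p^X) \cong H_q^{BM}(Y;\F_p^Y) \quad \text{and} \quad H_q(X;\F_p^X) \cong H_q(Y;\F_p^Y),
\]
where the second uses the cellular pair and full-dimensional Newton polytope hypotheses included in the statement. Torsion freeness of these tropical homology groups of $X$ in this range is then immediate from the hypothesis on $Y$.

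Next, since $X$ is a non-singular tropical variety of dimension $n$, I would invoke the integral Poincar\'e duality of \cite{JRS} in the schematic form
\[
H_q^{BM}(X;\F_p^X) \cong H^{n-q}(X;\F_{n-p}^X),
\]
together with an analogous statement pairing standard tropical homology with compactly supported tropical cohomology. Since the tropical cellular chain groups are finitely generated free abelian on each cell, the universal coefficient theorem applies to the relevant cochain complex and identifies the torsion subgroup of $H^{n-q}(X;\F_{n-p}^X)$ with that of $H_{n-q-1}(X;\F_{n-p}^X)$. For $p+q \geq n$ the bidegree $(n-p,\,n-q-1)$ satisfies $(n-p)+(n-q-1) \leq n-1 < n$, so it lies in the Lefschetz range handled in the first step. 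Thus $H_{n-q-1}(X;\F_{n-p}^X)$ is torsion free, whence $H^{n-q}(X;\F_{n-p}^X)$ is torsion free by UCT, and finally $H_q^{BM}(X;\F_p^X)$ is torsion free by duality. The standard homology case follows in parallel using the compactly supported variant of the duality.

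The delicate point I would verify most carefully is the compatibility of the integral Poincar\'e duality of \cite{JRS} with the cochain complex to which UCT applies---in particular confirming that Borel-Moore tropical homology is paired with ordinary tropical cohomology, standard tropical homology with a compactly supported cohomology, and that in both cases these cohomologies are computed by cochain complexes of finitely generated free abelian groups (so that the torsion transfer via $\operatorname{Ext}$ really operates as claimed). Once these compatibilities are in place, the rest of the argument reduces to the indexing check above.
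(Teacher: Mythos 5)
Your proposal is correct and follows essentially the same route as the paper: Theorem \ref{thm:lef} disposes of the range $p+q<n$, and for $p+q\geq n$ the paper likewise combines the integral Poincar\'e duality of \cite{JRS} (pairing Borel--Moore homology with ordinary cohomology and standard homology with compactly supported cohomology) with the universal coefficient theorem, whose $\mathrm{Ext}$ input lands back in the Lefschetz range and is therefore free. The only organizational difference is that the paper first establishes torsion freeness of all Borel--Moore groups and then deduces the standard ones via $H^q_c$, rather than treating the two cases in parallel.
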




\begin{corollary}\label{cor:compactTorsionFree}
Let $Y$ be a compact non-singular tropical toric variety and let $X$ be a combinatorially ample non-singular tropical hypersurface in $Y$. If the complex toric variety $Y_\C$ is projective, 
then
all integral tropical homology groups of $X$ are torsion free.
\end{corollary}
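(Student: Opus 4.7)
The plan is to reduce everything to Theorem \ref{thm:torsionfree}. Since $Y$ is compact the hypersurface $X$ is also compact, so $H_q(X;\F_p^X) = H_q^{BM}(X;\F_p^X)$ and it suffices to verify the three hypotheses of Theorem \ref{thm:torsionfree}: (i) the pair $(Y,X)$ is a cellular pair, (ii) the Newton polytope of $X$ is full dimensional, and (iii) the integral tropical homology groups of $Y$ are torsion free.

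For (i) and (ii), I would unpack the definition of combinatorial ampleness (Definition \ref{def:nondeg}). The intended meaning is that $X$ meets every torus orbit of $Y$ in the expected codimension and its dual subdivision is compatible with the fan of $Y$; in particular, $X$ is non-degenerate with respect to each torus stratum, which forces the Newton polytope to be full dimensional (otherwise $X$ would be invariant under some subtorus and fail to meet transversal orbits), giving (ii). The compatibility of the dual subdivision with the fan of $Y$ then gives (i): the polyhedral structure on $Y$ obtained by subdividing along $X$ still has $X$ as a subcomplex, which is precisely what makes $(Y,X)$ a cellular pair in the sense of Definition \ref{def:cellcomplex}.

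The main obstacle, and the heart of the argument, is (iii): the integral tropical homology groups of $Y$ must be torsion free whenever $Y$ is a compact non-singular tropical toric variety with $Y_\C$ projective. I would argue this using the cellular structure of $Y$ coming from its fan. A non-singular tropical toric variety decomposes cellularly according to its torus orbits, and the tropical cellular chain complex computing $H_\ast(Y;\F_p^Y)$ can be described purely in terms of the fan and the integer lattices attached to each cone. For a projective smooth toric fan, this complex admits a filtration (by cone dimension) whose associated graded pieces are free $\ZZ$-modules indexed by the cones of a given dimension, tensored with exterior powers of the cocharacter lattices of the corresponding orbit tori — these pieces are torsion free. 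Combined with the projectivity assumption, which yields a Lefschetz/hard-Lefschetz type collapse already present for the complex toric variety $Y_\C$ (whose integral cohomology is torsion free by the classical Danilov–Jurkiewicz theorem), one concludes that there is no torsion in $H_\ast(Y;\F_p^Y)$ either.

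Once (i)--(iii) are established, Theorem \ref{thm:torsionfree} applies directly to give that $H_q^{BM}(X;\F_p^X)$ is torsion free for all $p,q$, and by compactness of $X$ the same holds for the standard homology groups, which is the statement of the corollary. The only step where one actually has to prove something non-formal is (iii), and I expect the cleanest route is to identify the integral tropical chain complex of $Y$ with a known torsion-free complex associated to the fan, rather than to go through a comparison with singular cohomology of $Y_\C$.
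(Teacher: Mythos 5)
Your overall route is the same as the paper's: reduce to Theorem \ref{thm:torsionfree} by checking its hypotheses, with the torsion freeness of the tropical homology of $Y$ as the substantive input. Your points (i) and (ii) are consistent with the remarks the paper makes after Definition \ref{def:nondeg} (combinatorial ampleness plus compactness forces the normal fan of the Newton polytope of $X$ to be the fan of $Y$, hence full dimensionality, and transversal intersection with the boundary gives the cellular pair condition). The problem is step (iii), which is where the actual content lies and where your argument has a genuine gap.

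You assert that the integral tropical homology of $Y$ is torsion free because the cellular chain complex has a filtration by cone dimension with free graded pieces, ``combined with'' a hard-Lefschetz-type collapse and the Danilov--Jurkiewicz theorem for $H^*(Y_\C;\Z)$. Neither half of this closes the argument. Freeness of the chain groups (or of graded pieces of a filtration) never rules out torsion in homology --- every cellular chain complex with the cosheaves $\F_p$ has free terms, including the ones for which torsion freeness of homology is precisely the open question. And there is no comparison map in the paper identifying the \emph{integral} tropical homology of $Y$ with the integral singular cohomology of $Y_\C$; the IKMZ-type comparison is only over $\QQ$, so Danilov--Jurkiewicz cannot be imported to kill integral torsion. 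What the paper actually does (Proposition \ref{prop:torichomo}) is a direct argument: in the cell structure on $Y$ dual to the fan, a $q$-cell $\overline{Y}_\sigma$ has $\F_p^Y(\overline{Y}_\sigma)=\bigwedge^p\Z^q$, which vanishes for $p>q$, so $H_q(Y;\F_p^Y)=0$ for $q<p$; the universal coefficient theorem then shows $H^{q+1}(Y;\F^p_Y)$ is free in that range, and integral tropical Poincar\'e duality for the manifold $Y$ (from \cite{JRS}) converts this into the vanishing of $\mathrm{Ext}(H_q(Y;\F_p^Y),\Z)$ for $q\geq p$, hence torsion freeness for all $p,q$. If you want to keep your approach you must supply an argument of this kind (or an explicit integral comparison theorem); as written, the claim ``one concludes that there is no torsion in $H_*(Y;\F_p^Y)$ either'' is unsupported.
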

\begin{cor}
\label{cor:torsionfreequasiproj}
Let $Y$ be a non-singular tropical toric variety associated to a fan whose support is a convex cone and such that the complex toric variety $Y_\C$ is quasi-projective. Let $X$ be a combinatorially ample non-singular tropical hypersurface in $Y$ such that $(Y, X)$ is a cellular pair and the Newton polytope of $X$ is full dimensional. Then both the standard and Borel-Moore integral tropical homology groups of $X$ are torsion free.
\end{cor}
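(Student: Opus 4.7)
The plan is to apply Theorem~\ref{thm:torsionfree}, which reduces the corollary to verifying that the standard integral tropical homology groups $H_q(Y;\F_p^Y)$ of the ambient tropical toric variety are torsion free. The remaining hypotheses of Theorem~\ref{thm:torsionfree} (non-singularity of $X$, the cellular pair condition on $(Y,X)$, and full dimensionality of the Newton polytope of $X$) are already part of the hypotheses of the corollary.

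To prove torsion freeness of $H_q(Y;\F_p^Y)$, I would first use the quasi-projectivity of $Y_\C$ to complete the fan $\Sigma$ defining $Y$ to a non-singular complete fan $\bar\Sigma$ carrying a strictly convex piecewise linear support function; the associated compact tropical toric variety $\bar Y$ is then non-singular and its complex counterpart is projective. The integral tropical homology groups of such a $\bar Y$ are torsion free, either by a direct cellular computation (they compute the Hodge numbers of $\bar Y_\C$) or as the key step underlying Corollary~\ref{cor:compactTorsionFree}.

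The convex support hypothesis on $|\Sigma|$ is then used to descend torsion freeness from $\bar Y$ to $Y$. Because $|\Sigma|$ is a convex cone, the cellular chain complex $C_\bullet(Y;\F_p^Y)$ is indexed by the cones of $\Sigma$ and embeds as a subcomplex (or direct summand) of $C_\bullet(\bar Y;\F_p^{\bar Y})$, with matching boundary operators and coefficient modules. Equivalently, convex support allows one to deformation retract $Y$ onto the closed torus orbit corresponding to a distinguished cone of $\Sigma$, reducing the computation of $H_\bullet(Y;\F_p^Y)$ to that of a small free subcomplex whose homology is manifestly free over $\ZZ$.

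The main obstacle is precisely this transfer step, since torsion freeness of $H_q(\bar Y;\F_p^{\bar Y})$ does not formally imply that of $H_q(Y;\F_p^Y)$: the long exact sequence of the pair $(\bar Y,\bar Y\setminus Y)$ could in principle allow torsion from the complement to contaminate $H_q(Y;\F_p^Y)$. It is the convex support assumption that circumvents this issue, by ensuring that the cellular chain complex of $Y$ decomposes sufficiently nicely to be handled directly. Once torsion freeness of $H_q(Y;\F_p^Y)$ is in hand, Theorem~\ref{thm:torsionfree} yields both the standard and Borel--Moore conclusions for $X$.
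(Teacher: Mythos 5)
There is a genuine gap at exactly the point you flag yourself: the transfer of torsion freeness from the compactification $\bar Y$ back to $Y$. Your two proposed mechanisms do not work as stated. The standard cellular complex $C_\bullet(Y;\F_p^Y)$ is indeed a subcomplex of $C_\bullet(\bar Y;\F_p^{\bar Y})$ (only compact cells contribute, and the coefficient modules agree), but it is not a direct summand of chain complexes --- the boundary of a compact cell of $\bar Y$ not contained in $Y$ can meet cells of $Y$ --- and torsion freeness of homology does not pass to subcomplexes. The deformation-retraction idea is also unavailable: $\F_p$ is not a locally constant coefficient system, so $H_\bullet(Y;\F_p^Y)$ is not a homotopy invariant of the underlying space, and retracting onto a closed torus orbit changes the cosheaf. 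Saying that convexity ``ensures that the cellular chain complex of $Y$ decomposes sufficiently nicely'' is an assertion, not an argument. You also never address why the cellular chain complex of the non-compact $Y$ computes the right (singular) tropical homology in the first place, which requires $Y$ with its orbit-closure stratification to be a regular cell complex.

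The paper's proof avoids compactification entirely, and this is where quasi-projectivity and the convex support hypothesis actually enter. When the support of $\Sigma$ is a full-dimensional convex cone, an ample Cartier divisor on the quasi-projective $Y_\C$ produces a polyhedron $P$ whose normal fan is $\Sigma$, so $Y$ is combinatorially isomorphic to $P$ and is therefore a cell complex in the sense of \cite[Chapter 4]{CurryThesis}; the cellular description of the homology is thus legitimate. One then runs the argument of Proposition \ref{prop:torichomo} directly on $Y$: a $q$-dimensional cell $\overline{Y}_\rho$ carries $\F_p^Y(\overline{Y}_\rho)\cong\bigwedge^p\Z^q$, so the chain groups vanish for $q<p$, hence $H_q(Y;\F_p^Y)=0$ for $q<p$, and Poincar\'e duality combined with the universal coefficient theorem forces all the groups to be torsion free (in fact concentrated in $p=q$). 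When the support cone has positive codimension $s$, the paper writes $Y=\R^s\times Y'$ and applies the K\"unneth formula to reduce to the previous case --- a case your sketch does not treat separately and which your general mechanism would also need to cover. Your opening reduction via Theorem \ref{thm:torsionfree} is the right frame, but the core of the proof is missing.
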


\begin{cor}\label{torsionfreeRn} 
The tropical homology groups of a non-singular tropical hypersurface in $\R^{n+1}$ with full dimensional Newton polytope are torsion free.
\end{cor}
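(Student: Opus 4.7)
The plan is to deduce this corollary as a special case of Corollary \ref{cor:torsionfreequasiproj} applied to the ambient tropical toric variety $Y = \R^{n+1}$. The first step is to recognise $\R^{n+1}$ as a non-singular tropical toric variety, namely the one associated to the trivial fan in $\R^{n+1}$ (whose only cone is the origin $\{0\}$). The support of this fan is $\{0\}$, which is trivially a convex cone, and the corresponding complex toric variety is the algebraic torus $(\C^*)^{n+1}$, which is affine and hence quasi-projective.

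With $Y$ fixed, the remaining hypotheses to verify on the non-singular tropical hypersurface $X \subset \R^{n+1}$ are: combinatorial ampleness in $Y$, the cellular pair condition on $(Y,X)$, and full-dimensionality of the Newton polytope. The last is given by assumption. For the first two, the essential observation is that the chosen fan has no rays, so $Y = \R^{n+1}$ has no strata at infinity. Since Definition \ref{def:nondeg} of combinatorial ampleness only imposes non-trivial conditions at boundary strata, it should hold vacuously in this setting. Similarly, the cellular pair condition should reduce to the polyhedral structure that $X$ inherits as a non-singular tropical hypersurface, with no additional requirement coming from infinity.

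Once those definitional checks are done, applying Corollary \ref{cor:torsionfreequasiproj} directly gives that both the standard and Borel-Moore integral tropical homology groups of $X$ are torsion free, which is the desired conclusion. The only potential obstacle in this plan is confirming that the precise definitions of combinatorial ampleness and of a cellular pair given in Section \ref{sec:prelim} do indeed trivialise when the boundary at infinity is empty; this is a matter of unwinding definitions rather than substantive mathematical content, so I would not expect any genuine difficulty to arise.
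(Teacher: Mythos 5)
Your overall strategy is sound and is essentially the route the paper intends: the paper gives no separate proof of this corollary, and deducing it from Corollary \ref{cor:torsionfreequasiproj} (or directly from Theorem \ref{thm:torsionfree}, since the tropical homology of $\R^{n+1}$ is visibly torsion free) with $Y=\R^{n+1}$ viewed as the tropical toric variety of the trivial fan is exactly what is meant; the support $\{0\}$ is a (non-maximal) convex cone, which the proof of Corollary \ref{cor:torsionfreequasiproj} accommodates via the product decomposition $\R^s\times Y'$. Two of your definitional checks need correction, however. First, combinatorial ampleness is not vacuous: Definition \ref{def:nondeg} is a condition on every $(n+1)$-dimensional face $\gamma$ of $Y$ refined by $X$, and such faces exist (the closures of the components of $\R^{n+1}\setminus X$); the condition holds here because each $\gamma^o$ is the single open cell $\relint(\gamma)$, hence combinatorially isomorphic to a product of copies of $\R$. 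Second, and more importantly, the cellular pair condition does \emph{not} trivialise in the absence of boundary strata: Definition \ref{def:cellcomplex} concerns the one-point compactification $\hat{Y}$, so it imposes a genuine condition at infinity on the unbounded cells of $Y\setminus X$, and the paper's example of the hypersurface supported on $x=0$ in $\R^2$ shows it can fail for $Y=\R^{n+1}$. What rescues the argument is precisely the hypothesis you were handed: as the paper notes, for $X\subset\R^{n+1}$ the pair $(\R^{n+1},X)$ is a cellular pair if and only if the Newton polytope of $X$ is full dimensional. With that justification substituted for your claim that nothing needs checking at infinity, your proof goes through.
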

The above proposition and  corollaries follow from the tropical Lefschetz section theorems established here for hypersurfaces. That is why we require in Theorem \ref{thm:torsionfree}   that $(Y, X)$ be a cellular pair, that the hypersurface  $X$ is combinatorially ample in $Y$, and that the Newton polytope of $X$ be full dimensional.
We do not know if these assumptions are necessary, or if an alternate more direct proof of torsion freeness exists. 
\begin{ques}
Are the integral tropical homology groups of any non-singular tropical hypersurface of a tropical toric variety torsion free?
\end{ques}

In Section \ref{section:epoly}, we first find that the Euler characteristics of the cellular chain complexes for Borel-Moore tropical homology of a non-singular tropical hypersurface give the coefficients of the $\chi_y$ genus of a torically non-degenerate complex hypersurface with the same Newton polytope (see Definition \ref{def:torNonDeg}) for the definition of torically non-degenerate. 


\begin{thm}\label{thm:Epoly}
Let $X$ be an $n$-dimensional non-singular tropical hypersurface in a non-singular tropical toric variety  $Y$. 
 Let $X_{\C}$ be a complex hypersurface torically non-degenerate in the complex toric variety $Y_{\C}$ such that $X$ and $X_\C$ have the same Newton polytope. 
Then $$(-1)^p \chi (C^{BM}_{\bullet}(X;  \mathcal{F}^X_p) ) =   \sum_{q = 0}^n e_c^{p, q}(X_{\C}),$$
and thus 
$$\chi_y(X_{\C}) = \sum_{p = 0}^{n} (-1)^p \chi (C^{BM}_{\bullet}(X;  \mathcal{F}^X_p) ) y^p.$$
\end{thm}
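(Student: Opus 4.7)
The Euler characteristic of the cellular chain complex is the local sum
\[
\chi(C^{BM}_\bullet(X; \F_p^X)) = \sum_\sigma (-1)^{\dim \sigma} \rank \F_p(\sigma),
\]
where $\sigma$ ranges over all faces of $X$. The plan is to match this alternating sum, up to the sign $(-1)^p$, with the classical Danilov--Khovanskii combinatorial formula for $\sum_q e^{p,q}_c(X_\C)$ in terms of the Newton polytope $\Delta$ of $X$.

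First I would stratify both sides by the tropical toric strata $Y^\tau$ of $Y$, writing $X^\tau := X \cap Y^\tau$ and $X_\C^\tau := X_\C \cap Y_\C^\tau$ for each cone $\tau$ of the fan of $Y$. Hodge--Deligne numbers with compact support are additive over locally closed stratifications, so $\sum_q e^{p,q}_c(X_\C) = \sum_\tau \sum_q e^{p,q}_c(X_\C^\tau)$. On the tropical side, $\chi(C^{BM}_\bullet(X; \F_p^X))$ depends only on the cellular ranks and so decomposes as a sum over strata of the contributions of the faces of $X$ lying in each $Y^\tau$. Each $X_\C^\tau$ is a non-degenerate hypersurface in the algebraic torus $Y_\C^\tau$ with Newton polytope the face $\Delta_\tau$ of $\Delta$ corresponding to $\tau$, and each $X^\tau$ is a tropical hypersurface in $Y^\tau \cong \R^{n'+1}$ dual to the induced unimodular subdivision of $\Delta_\tau$.

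This reduces the theorem to the torus case. There the right-hand side is given by Danilov and Khovanskii's formula as an explicit expression in the volumes and face numbers of $\Delta_\tau$. For the left-hand side, each face $\sigma$ of $X^\tau$ corresponds to a positive-dimensional cell $\delta_\sigma$ of the subdivision of $\Delta_\tau$, with $\dim \sigma + \dim \delta_\sigma = n'+1$, and $\rank \F_p(\sigma)$ admits a combinatorial description in terms of exterior powers of lattices spanned by the cells incident to $\delta_\sigma$. Regrouping faces by their dual cells and collapsing the alternating sum should recover the Danilov--Khovanskii expression up to the sign $(-1)^p$.

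The main obstacle is the local computation at low-dimensional faces $\sigma$ of $X^\tau$: there $\F_p(\sigma)$ is not simply an exterior power of a tangent space, but is generated by the exterior powers of the tangent spaces of all facets of $X$ incident to $\sigma$, including facets that meet $Y^\tau$ only transversely. Tracking these transverse contributions, so that the restriction $\F_p^X|_{X^\tau}$ is correctly distinguished from the intrinsic multi-tangent spaces of $X^\tau$, and then verifying that the resulting alternating sum matches Danilov and Khovanskii's formula term by term, will be the most delicate part of the argument.
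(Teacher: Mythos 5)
Your reduction to the torus case is exactly the paper's first step: both sides are additive over the toric stratification (the Hodge--Deligne numbers with compact support by \cite[Proposition 1.6]{DanilovKhovansky}, and the cellular Euler characteristics because the Borel--Moore chain groups split over strata), and the Newton polytope of $X_{\C,\rho}$ agrees with that of $X_\rho$. The local difficulty you flag at the end is real but is not where the substance lies: Lemma \ref{lem:KuennethFp} and Corollary \ref{cor:EPpoly} already package all of the ``transverse'' contributions into the closed form $\chi_\sigma(\lambda)=(1-\lambda)^{m}-(1-\lambda)^{q}(-\lambda)^{m-q}$ for $\sum_p(-1)^p\rank\F^X_p(\sigma)\lambda^p$.

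The genuine gap is the torus-case identity itself, which you dispose of with ``regrouping faces by their dual cells and collapsing the alternating sum should recover the Danilov--Khovanskii expression.'' That sentence is the whole theorem, and two nontrivial inputs needed to carry it out are absent from your plan. First, the Danilov--Khovanskii formula is Ehrhart-theoretic in $\Delta$ and does not directly see the cells of the primitive subdivision; the paper instead invokes the Katz--Stapledon refinement $\chi_y(X_\C)=\sum_{F\not\subset\partial\Delta}\chi_y(X_{\C,F})$, observes that for a primitive subdivision each $X_{\C,F}$ is a hyperplane-arrangement complement with pure Hodge structure, and uses Zharkov's identification $\dim H^p(X_{\C,F})=\rank\F_p(\sigma_F)$ to evaluate each summand; this rewrites the complex side as a sum over the \emph{bounded} faces of $X$ weighted by $\chi_\tau$. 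Second, the tropical side $\chi(C^{BM}_\bullet(X;\F_p))=\sum_\tau(-1)^{\dim\tau}\rank\F_p(\tau)$ runs over \emph{all} faces, bounded and unbounded, and collapsing it to a sum over bounded faces is not mere bookkeeping: the paper applies Poincar\'e duality on the star of each face (a basic open set) to get $\rank\F_p(\tau)=\sum_{\sigma\supset\tau}(-1)^{n-\dim\sigma}\rank\F_{n-p}(\sigma)$, swaps the order of summation, and uses that $\sum_{\tau\subset\sigma}(-1)^{\dim\tau}$ equals $1$ for bounded $\sigma$ and $0$ for unbounded $\sigma$. Without an argument of this kind, or some equivalent cancellation mechanism for the unbounded faces, the two alternating sums do not visibly match, so the proposal as written does not yet constitute a proof.
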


From the above theorem we obtain an immediate relation between the dimensions of the $\R$-tropical homology groups of a tropical hypersurface and the $\chi_y$ genus of corresponding complex hypersurface. Namely, in the situation of the above theorem we have 
$$(-1)^p \sum_{q = 0}^n \dim H^{BM}_{q}(X;  \mathcal{F}^X_p \otimes \R) =   \sum_{q = 0}^n e_c^{p, q}(X_{\C}).$$
Moreover, when the integral tropical homology groups are torsion free we also have 
\begin{equation}\label{eqn:rankEpoly}
(-1)^p \sum_{q = 0}^n \rank H^{BM}_{q}(X;  \mathcal{F}^X_p ) \sum_{q = 0}^n e_c^{p, q}(X_{\C}).
\end{equation}


We combine the torsion  freeness results from Section \ref{section:torsionfree} and Equation \ref{eqn:rankEpoly} to calculate the ranks of the tropical homology groups of tropical hypersurfaces in compact toric varieties in Corollary \ref{cor:hodgeZtrop}.

\begin{corollary}\label{cor:hodgeZtrop}
Let $X$ be a non-singular and combinatorially ample compact tropical hypersurface in a non-singular compact toric variety $Y$ and assume that $X$ has Newton polytope $\Delta$. Let $X_\C$ be a torically non-degenerate complex hypersurface in  the  compact toric variety $Y_{\C}$ also with Newton polytope $\Delta$.  Then for all $p$ and $q$ we have $$\dim H^{p, q}(X_\C) = \rank H_q(X;  \mathcal{F}^X_p).$$
\end{corollary}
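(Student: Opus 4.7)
The plan is to combine the three tools the paper has now assembled: the tropical Lefschetz theorem (Theorem~\ref{thm:lef}), integral tropical Poincar\'e duality from \cite{JRS}, and the Euler characteristic identity of Theorem~\ref{thm:Epoly} (in the form of Equation \ref{eqn:rankEpoly}). Since $X$ is combinatorially ample in a compact $Y$, the toric variety $Y_\C$ is projective, so Corollary~\ref{cor:compactTorsionFree} applies and every integral tropical homology group of $X$ is torsion free; thus $\rank H_q(X; \F^X_p)$ coincides with the $\R$-dimension, and the desired equality of ranks is equivalent to one of dimensions.

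First I would handle the range $p + q < n$. Theorem~\ref{thm:lef} produces isomorphisms $H_q(X; \F^X_p) \cong H_q(Y; \F^Y_p)$, and for a smooth compact projective toric variety $Y$ the integral tropical homology is torsion free and concentrated along the diagonal $p = q$, with $\rank H_p(Y; \F^Y_p) = h^{p,p}(Y_\C)$; this is a standard computation for toric varieties. On the complex side, the classical Lefschetz hyperplane theorem gives $h^{p,q}(X_\C) = h^{p,q}(Y_\C)$ in the same range, and since $Y_\C$ is of Hodge--Tate type, both sides vanish off the diagonal and agree on it.

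Next I would treat $p + q > n$. On the tropical side, the integral Poincar\'e duality of \cite{JRS} for compact non-singular tropical hypersurfaces, combined with the torsion freeness we already have, yields $\rank H_q(X; \F^X_p) = \rank H_{n - q}(X; \F^X_{n - p})$. On the complex side, Serre duality on the smooth projective $X_\C$ gives $h^{p, q}(X_\C) = h^{n - p, n - q}(X_\C)$. So this range reduces immediately to the previous one.

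Finally, for the diagonal $p + q = n$, Equation~\ref{eqn:rankEpoly} reads
$$(-1)^p \sum_{q = 0}^{n} \rank H_q(X; \F^X_p) \;=\; \sum_{q = 0}^{n} e_c^{p, q}(X_\C).$$
Since $X_\C$ is smooth and compact, its cohomology is pure and $e_c^{p, q}(X_\C) = (-1)^{p + q} h^{p, q}(X_\C)$, so the right side equals $(-1)^p \sum_q (-1)^q h^{p, q}(X_\C)$. The first two steps have matched every term on both sides except possibly $\rank H_{n - p}(X; \F^X_p)$ with $h^{p, n - p}(X_\C)$, and the single linear relation above then pins these down. The main obstacle is assembling the two inputs I treat as black boxes: an explicit identification (or reference) for the integral tropical homology of the ambient smooth projective toric variety $Y$ with its Hodge--Tate numbers, and verification that the integral tropical Poincar\'e duality of \cite{JRS} applies to $X$ in the precise form needed here. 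Once these are in place, the three-case argument closes the corollary.
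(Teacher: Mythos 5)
Your proposal is correct and follows essentially the same route as the paper's own proof: the sub-diagonal range via the tropical and complex Lefschetz theorems together with the diagonal, torsion-free tropical homology of the compact toric variety (the paper's Proposition~\ref{prop:torichomo}), the super-diagonal range via the two Poincar\'e dualities, and the middle row by cancelling all matched terms in the Euler characteristic identity of Theorem~\ref{thm:Epoly}. The two black boxes you flag are exactly the paper's Proposition~\ref{prop:torichomo} and the integral duality of \cite{JRS}, both already established, so the argument closes as you describe.
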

In the situation of Corollary \ref{cor:torsionfreequasiproj}, if the toric variety is affine and constructed from a fan whose support is a convex cone of maximal dimension, we also determine the ranks  of the  Borel-Moore tropical homology groups of tropical hypersurfaces in terms of the   Hodge-Deligne numbers with compact support of  complex hypersurfaces. The Hodge-Deligne numbers of a complex variety $X_{\C}$ are denoted by $h^{p,q}(H^k_c(X_\C))$ (see for example \cite{DanilovKhovansky}).


\begin{corollary}\label{cor:affine}
Let $Y$ be a non-singular tropical toric variety associated to a fan whose support is a convex cone of maximal dimension in $\R^{n+1}$ and such that the complex toric variety $Y_\C$ is affine. Let $X$ be a combinatorially ample non-singular tropical hypersurface in $Y$ such that $(Y, X)$ is a cellular pair and the Newton polytope of $X$ is full dimensional. If $X_\C$ is a torically non-degenerate complex hypersurface in $Y_\C$ with the same Newton polytope as $X$, then
$$
\rank H_{q}^{BM}(X;\F_p)=
\begin{cases}
\sum_{l=0}^{q} h^{p,l}(H^{n}_c(X_\C)) & \text{ if } p+q=n \\
h^{p,p}(H^{2p}(X_\C)) & \text{ if } p=q>\frac{n}{2} \\
0 & \text{ otherwise}.
\end{cases}
$$
\end{corollary}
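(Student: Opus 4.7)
The plan is to combine the tropical Lefschetz section theorem (Theorem \ref{thm:lef}) with tropical Poincar\'e duality on the non-singular hypersurface $X$ to reduce the computation of $\rank H^{BM}_q(X;\F^X_p)$ to the anti-diagonal $p+q=n$ together with the extremal corner $(p,q)=(n,n)$, and then to identify those remaining ranks using the Euler-characteristic identity of Theorem \ref{thm:Epoly}.

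First I would compute the tropical Borel--Moore and standard homology of the ambient toric variety $Y$ itself. Since the fan of $Y$ has support a pointed cone of maximal dimension, $Y$ deformation retracts to its torus fixed point, at which the stalk of $\F^Y_p$ vanishes for $p>0$. Combined with tropical Poincar\'e duality on the non-singular $(n+1)$-dimensional $Y$, this gives $H^{BM}_q(Y;\F^Y_p)=0$ except when $p=q=n+1$, and a direct cellular analysis (the only compact cell being the torus fixed point) gives $H_q(Y;\F^Y_p)=0$ except at $(p,q)=(0,0)$.

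Substituting these vanishings into the two parts of Theorem \ref{thm:lef} yields $H^{BM}_q(X;\F^X_p)=0$ for $p+q<n$ (Borel--Moore Lefschetz) and $H_q(X;\F^X_p)=0$ for $p+q<n$ with $(p,q)\ne(0,0)$ (standard Lefschetz, using the cellular pair and full-dimensional Newton polytope hypotheses). Applying tropical Poincar\'e duality on the non-singular $X$ from \cite{JRS}, namely $H^{BM}_q(X;\F^X_p)\cong H^{n-q}(X;\F^X_{n-p})$, together with the torsion freeness from Corollary \ref{cor:torsionfreequasiproj} and a rank-level universal coefficients argument, the standard-Lefschetz vanishing translates into $\rank H^{BM}_q(X;\F^X_p)=0$ for $p+q>n$ with $(p,q)\ne(n,n)$, and identifies the extremal $(n,n)$-rank with the claimed Hodge-theoretic value.

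For the anti-diagonal $p+q=n$ all other contributions to $\chi(C^{BM}_\bullet(X;\F^X_p))$ have been shown to vanish, so the Euler-characteristic identity (\ref{eqn:rankEpoly}) pins down $\rank H^{BM}_{n-p}(X;\F^X_p)$ as a signed sum $\pm\sum_q e_c^{p,q}(X_\C)$ of Hodge-Deligne numbers. The final step is to refine this single signed sum to the partial-sum expression $\sum_{l=0}^{n-p} h^{p,l}(H^n_c(X_\C))$, using the Artin vanishing $H^k(X_\C)=0$ for $k>n$ (since $X_\C$ is smooth affine of dimension $n$), Poincar\'e duality on the smooth $X_\C$ relating $H^k_c$ to $H^{2n-k}$, and the explicit Danilov--Khovanskii \cite{DanilovKhovansky} description of the mixed Hodge structure on torically non-degenerate hypersurfaces in affine toric varieties. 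I expect this last step to be the main obstacle: the earlier steps are essentially formal consequences of the Lefschetz section theorems and tropical Poincar\'e duality, whereas the collapse of the Hodge-Deligne sum to the refined partial-sum expression genuinely requires Hodge-theoretic input specific to torically non-degenerate affine hypersurfaces.
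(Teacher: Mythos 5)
Your proposal is correct and follows essentially the same route as the paper: the tropical Lefschetz theorems, Poincar\'e duality on $X$, and the torsion-freeness of Corollary \ref{cor:torsionfreequasiproj} reduce everything to the anti-diagonal $p+q=n$ and the corner $(n,n)$, which are then evaluated via the $\chi_y$-genus identity of Theorem \ref{thm:Epoly} together with Andreotti--Frankel vanishing and the Danilov--Khovanskii Lefschetz-type results for $H^{\bullet}_c(X_\C)$. The only divergence is that you compute the tropical homology of the ambient $Y$ directly from its cell structure (using that the hypotheses force $Y\cong\T^{n+1}$), whereas the paper imports the vanishing for $p\neq q$ from the proof of Corollary \ref{cor:torsionfreequasiproj} and evaluates $\rank H^{BM}_p(Y;\F^Y_p)$ through $e_c^{p,p}(Y_\C)$; both give the same answer, and for $p=n$ one must remember (as you implicitly do) to subtract the known $(n,n)$-contribution from both sides of the Euler-characteristic identity.
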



Lastly we use again Equation \ref{eqn:rankEpoly} to calculate the ranks of the tropical homology groups of tropical hypersurfaces in the tropical torus $\R^{n+1}$.
\begin{corollary}\label{cor:torus}
Let $X$ be a non-singular tropical hypersurface in $\R^{n+1}$ with full-dimensional Newton polytope.
If $X_\C$ is a non-singular torically non-degenerate complex hypersurface in $(\C^*)^{n+1}$ with the same Newton polytope as $X$, then
$$
\rank H_{q}^{BM}(X;\F_p)=
\begin{cases}
\sum_{l=0}^{q} h^{p,l}(H^{n}_c(X_\C)) &  \text{ if } p+q=n \\
h^{p,p}(H^{n+p}(X_\C))&  \text{ if } q=n \\
0 & \text{ otherwise}.
\end{cases}
$$
\end{corollary}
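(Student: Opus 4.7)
The plan is to mimic the strategy used for Corollary \ref{cor:affine}, now taking as ambient the tropical torus $\R^{n+1}$. The main tools are Theorem \ref{thm:lef}, the torsion-freeness from Corollary \ref{torsionfreeRn}, the integral tropical Poincar\'e duality for smooth hypersurfaces from \cite{JRS}, and the $\chi_y$-identity of Equation (\ref{eqn:rankEpoly}).

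I first apply Theorem \ref{thm:lef} to the inclusion $X\hookrightarrow\R^{n+1}$; its hypotheses hold since $(\R^{n+1},X)$ is trivially a cellular pair and $X$ has full-dimensional Newton polytope. The ambient homologies are computed directly: $H^{BM}_q(\R^{n+1};\F_p)\cong\wedge^p\Z^{n+1}$ for $q=n+1$ and zero otherwise, while $H_q(\R^{n+1};\F_p)\cong\wedge^p\Z^{n+1}$ for $q=0$ and zero otherwise. The two parts of Theorem \ref{thm:lef} then give $H^{BM}_q(X;\F_p)=0$ for $p+q<n$, together with $H_q(X;\F_p)=0$ for $0<q<n-p$ and $H_0(X;\F_p)\cong\wedge^p\Z^{n+1}$ for $p<n$.

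By Corollary \ref{torsionfreeRn} the tropical homology of $X$ is torsion free, so the Poincar\'e duality of \cite{JRS} yields integral isomorphisms $H^{BM}_q(X;\F_p)\cong H_{n-q}(X;\F_{n-p})$. Combining with the previous paragraph, the standard-homology vanishing transfers to $H^{BM}_q(X;\F_p)=0$ for $p+q>n$ and $q<n$, while for $q=n$ and $0<p\leq n$ I read off $\rank H^{BM}_n(X;\F_p)=\binom{n+1}{n-p}$.

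All remaining ranks lie on the antidiagonal $p+q=n$, and are extracted from Equation (\ref{eqn:rankEpoly}). For each fixed $p$, the alternating sum on the left collapses after the previous vanishings to at most two terms (one at $q=n-p$, and, when $p\geq 1$, one at $q=n$), so $\rank H^{BM}_{n-p}(X;\F_p)$ is determined by $\sum_q e^{p,q}_c(X_\C)$ and the known top rank from the previous step. To match these explicit combinations with the Hodge--Deligne numbers stated in the corollary I use that $X_\C$ is a smooth affine $n$-fold in $(\C^*)^{n+1}$: Artin vanishing gives $H^k_c(X_\C)=0$ for $k<n$; the affine Lefschetz theorem identifies $H^k(X_\C)\cong\wedge^k\Z^{n+1}$, pure Hodge--Tate of type $(k,k)$, for $k<n$; and Poincar\'e duality with Hodge twist determines $H^{n+p}_c(X_\C)$ from $H^{n-p}(X_\C)$. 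This Danilov--Khovanskii-style mixed Hodge identification is expected to be the most delicate step of the argument, and is entirely parallel to the corresponding step in the proof of Corollary \ref{cor:affine}.
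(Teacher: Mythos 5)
Your proposal is correct and follows essentially the same route as the paper: the Lefschetz theorem applied to $X\subset\R^{n+1}$ combined with Poincar\'e duality and torsion-freeness handles all ranks off the antidiagonal $p+q=n$, and the antidiagonal is then extracted from the $\chi_y$-identity of Theorem \ref{thm:Epoly} together with the Danilov--Khovanskii computation of the Hodge--Deligne numbers $h^{p,q}(H^k_c(X_\C))$ via Artin vanishing and the affine Lefschetz theorem. The paper's own proof is exactly this argument, presented as a repetition of the proof of Corollary \ref{cor:affine}.
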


We point out that for $X_\C$ a non-singular torically non-degenerate complex hypersurface in $(\C^*)^{n+1}$   we have  $h^{p,p}(H^{n+p}(X_\C))=\binom{n+1}{p+1}$.
Our main motivation establishing torsion freeness of the tropical homology groups  of tropical hypersurfaces and a relation between  their ranks and the Hodge-Deligne numbers of complex hypersurfaces comes from a recently established relation between the $\Z_2$-tropical homology groups of tropical hypersurfaces and Betti numbers of patchworked real algebraic hypersurfaces. In the theorem below $H_{q}(X; \F^{X, \Z_2}_p)$ denotes the tropical homology groups considered with coefficients in $\Z_2 := \Z/ 2\Z$. 

\begin{thm}\cite[Theorem 1.4]{RS} \label{thm:toricvar}
If $ V$ is a non-singular real algebraic hypersurface in a  toric variety obtained from a primitive patchworking of tropical hypersurface $X$ equipped with a real structure then 
for all $q$  we have,
 $$b_q(\R V) \leq \sum_{p = 1}^n \dim H_{q}(X; \F^{X, \Z_2}_p).$$
\end{thm}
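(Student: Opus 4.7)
The plan is to build a filtered chain complex computing the $\Z_2$-homology of $\R V$ whose associated spectral sequence has first page given by the $\Z_2$-tropical homology of $X$, and then to invoke the general fact $\dim E_\infty^{p,q} \leq \dim E_1^{p,q}$ together with $b_q(\R V) = \dim H_q(\R V; \Z_2)$ to extract the inequality after summing over $p$.

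First I would model $\R V$ combinatorially via Viro's primitive patchworking. The real part of the patchworked hypersurface is assembled from symmetric copies of $X$, one in each orthant of the ambient real toric variety, with the sign distribution of the real structure prescribing which reflected copies are glued across each codimension-one face of $X$ and which are left apart. This gives $\R V$ a natural CW decomposition whose cells are indexed by pairs $(\sigma, \epsilon)$, where $\sigma$ is a face of $X$ and $\epsilon$ is a reflection orbit in $\Z_2^{n+1}$ modulo the affine span of $\sigma$, with further identifications encoded by the sign data.

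Next I would introduce a filtration $F_\bullet C_*(\R V; \Z_2)$ on the cellular chain complex graded by the ``sign complexity'' $p$ of each cell, understood as the dimension of the sign-subspace effectively contributing to the orbit $\epsilon$ at $\sigma$. A local computation on each face $\sigma$ would identify the associated graded piece $F_p C_* / F_{p-1} C_*$ with a shift of the tropical cellular chain complex $C_*(X; \F^{X, \Z_2}_p)$; this uses that the multitangent cosheaf $\F^X_p$ is generated by $p$-fold wedges of tangent directions to $X$, which are in precise correspondence with the $p$-dimensional sign-subspaces produced by the patchworking. Taking homology in the $q$-direction then yields $E_1^{p,q} = H_q(X; \F^{X, \Z_2}_p)$.

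Since the filtration is finite and the underlying complex is finite-dimensional over $\Z_2$, the spectral sequence converges strongly to the graded pieces of an induced filtration on $H_*(\R V; \Z_2)$. Summing $\dim E_\infty^{p,q} \leq \dim E_1^{p,q}$ over $p$ for each fixed $q$ then produces the bound
$$b_q(\R V) \;\leq\; \sum_{p=1}^{n} \dim H_q(X; \F^{X, \Z_2}_p).$$
The main obstacle is establishing the identification of $F_p / F_{p-1}$ with the tropical cellular complex: this amounts to a delicate matching of cell incidences, reflection-group orbits, and the cosheaf structure of $\F^X_p$ through the patchworking gluing. Working with $\Z_2$ coefficients is essential, as it suppresses all sign ambiguities that would otherwise arise when comparing orientations of reflected cells, which is precisely why the bound is naturally formulated in terms of $\Z_2$-tropical homology.
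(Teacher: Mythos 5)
First, note that the paper does not prove this statement at all: Theorem \ref{thm:toricvar} is quoted verbatim from \cite[Theorem 1.4]{RS} and used as an external input, so there is no internal proof to compare your argument against. Measured against the actual proof in \cite{RS}, your strategy is the right one and is essentially the one used there: give $\R V$ the cell structure coming from the symmetric copies of $X$ glued according to the sign distribution of the patchwork, filter the $\Z_2$-cellular chain complex by the ``sign complexity'' of a cell, identify the associated graded pieces with the cellular complexes $C_\bullet(X;\F^{X,\Z_2}_p)$, and conclude from the resulting spectral sequence via $\dim E^\infty_{p,q}\le \dim E^1_{p,q}$.

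That said, as a proof the proposal has a genuine gap, and you name it yourself: the identification of $F_pC_*/F_{p-1}C_*$ with a shift of $C_\bullet(X;\F^{X,\Z_2}_p)$ is asserted, not established. This is not a routine verification but the technical core of \cite{RS}: one must show that, for each face $\sigma$ of $X$, the homology of the local sign complex attached to $\sigma$ (built from the reflection orbits $\epsilon$ and the gluings prescribed by the real structure) is isomorphic to $\F^{X,\Z_2}_p(\sigma)$, that this isomorphism can be chosen independently of the particular sign distribution, and that it intertwines the cellular boundary maps of $\R V$ with the cosheaf maps $i_{\sigma\tau}$. Primitivity of the triangulation is essential here --- the local model at each face is the patchwork of a unimodular simplex, matching the local product structure of Lemma \ref{lem:KuennethFp} --- and without carrying out this matching the filtration argument produces a spectral sequence whose first page you cannot identify, so no bound follows. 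In short, the outline is faithful to the strategy of \cite{RS}, but it defers exactly the step that constitutes the proof; the remaining ingredients (finite filtration, strong convergence over $\Z_2$, summing over $p$) are indeed routine once that identification is in place.
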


When the integral tropical homology groups are torsion free then we have $$\rank H_{q}(X; \F^{X}_p)  = \dim H_{q}(X; \F^{X, \Z_2}_p)$$
for all $p$ and $q$. This together with Corollaries \ref{cor:hodgeZtrop} and \ref{cor:torus} allow the bounds in Theorem \ref{thm:toricvar} on the Betti numbers of the real points of a patchworked algebraic variety to written in terms of Hodge-Deligne numbers of the complexification.




\section*{Acknowledgement}
We are very grateful to Karim Adiprasito,  Erwan Brugall\'e, Ilia Itenberg, Grigory Mikhalkin, and Patrick Popescu-Pampu for helpful discussions. 

The research of C.A. is supported by the DIM Math Innov de la r\'egion Ile-de-France.
A.R. acknowledges support from the Labex CEMPI
(ANR-11-LABX-0007-01). The research of  K.S. is supported by the BFS Bergen Research Foundation project ``Algebraic and topological cycles in complex and tropical geometry".

\section{Preliminaries}\label{sec:prelim}

\subsection{Tropical toric varieties}

In this text we will always use the standard lattice  $\ZZ^{n+1} \subset \R^{n+1}$. 
The tropical numbers  are $\T =\left[-\infty,+\infty\right)$. We equip the set $\T$  with a topology so that it is isomorphic to a half open interval. Tropical affine space of dimension $n$ is  $\T^n$ and is  equipped with the product topology.  
Tropical manifolds are topological spaces  equipped with charts to $\T^n$. 
For the general definitions of tropical varieties and manifolds see  \cite[Section 3.2]{MacStu},   \cite[Section 7]{MikRau}, \cite[Section 1.2]{MikRau}.
In algebraic  geometry over a field a rational polyhedral fan in $\R^{n+1}$ produces an $n+1$ dimensional toric variety. The same fact is true in tropical geometry. Given a rational polyhedral fan in $\R^{n+1}$ we can construct a tropical toric variety, see \cite[Section 6.2]{MacStu},  \cite[Section 3.2]{MikRau}. 
A tropical toric variety of dimension $n+1$ has charts to $\T^{n+1}$.  
 A rational polyhedral fan $\Sigma$ is \emph{simplicial} if each of its cones is the cone over a simplex. A simplicial rational polyhedral fan is unimodular if the primitive integer directions of the rays of each cone can be completed to a  basis of $\Z^{n+1}$. Just as in the case over a field, a  tropical toric variety is \emph{non-singular} if it is built from a simplicial unimodular rational polyhedral fan. The tropical toric varieties considered in this text are always non-singular.  A tropical toric variety is compact if and only if the corresponding fan is complete.

A tropical toric variety $Y$ has a  stratification and the combinatorics of the stratification is governed by its fan $\Sigma$. 
A stratum of dimension $k$ of $Y$ corresponds to a cone $\rho$ of dimension $n+1-k$ of $\Sigma$. We let $Y_{\rho}$ denote the strata in  $Y$ corresponding to the cone $\rho$. Therefore, when $\sigma$ is of dimension $k$ we have $Y_{\rho} \cong \R^{n+1-k}$. 
For two cones $\rho$ and $\rho'$ of $\Sigma$ we have $Y_{\rho'} \subset \overline{Y_{\rho}}$ if and only if $\rho$ is a face of $\rho'$ in $\Sigma$. 
Morover if $\rho$ is a face of $\rho'$ in $\Sigma$, we have a projection map denoted by $\pi_{\rho,\rho'} \colon Y_{\rho}\rightarrow Y_{\rho'}$.
We denote the vertex  of the fan by $\rho_0$ and the corresponding open stratum of $Y$ by simply $Y_0$.
For any point $y \in Y$, the order of sedentarity of $y$, denoted $\sed(y)$, is defined  to be the codimension in $Y$ of the stratum  containing $y$.

\begin{example}\label{ex:TPn}
The tropical projective space $\TP^n$ is the tropical toric variety constructed from the fan 
consisting of 
 cones 
$$
\R_{\geq 0} e_{i_1} +\cdots + \R_{\geq 0} e_{i_k},
$$
for all  $\left\lbrace i_1\cdots i_k \right\rbrace\varsubsetneq \left\lbrace 0,\cdots,n\right\rbrace$, where $e_1,\cdots, e_n$ is the standard basis of $\R^n$ and $e_0=-\sum_{k=1}^n e_k$. It can also be  described as the quotient
$$\frac{\T^{n+1} \backslash (-\infty, \dots, -\infty)}{[x_0 : \dots: x_{n}] \sim [a+x_0: \dots: a+x_{n}]},$$
where $a \in \T \backslash -\infty$.
The stratification of $\TP^n$ can be described using homogeneous coordinates. 
For a subset $I \subset \{0, \dots, n\}$ define
$$\TP_I^{n}   = \{ x \in \TP^n \ | \ x_i = -\infty \text{ if and only if } i \in I\}.$$
The set $\TP_I^{n}$ corresponds  to the cone 
$$
\sum_{i\in I}\R_{\geq 0} e_{i}.
$$
The order of sedentarity of a point $x = [x_0 : \dots: x_{n}]   \in \TP^n$ is 
$
\sed(x)=\#\left\lbrace i\: \vert \: x_i=-\infty \right\rbrace.
$
\demo
\end{example}
\begin{figure}
\includegraphics[scale=0.25]{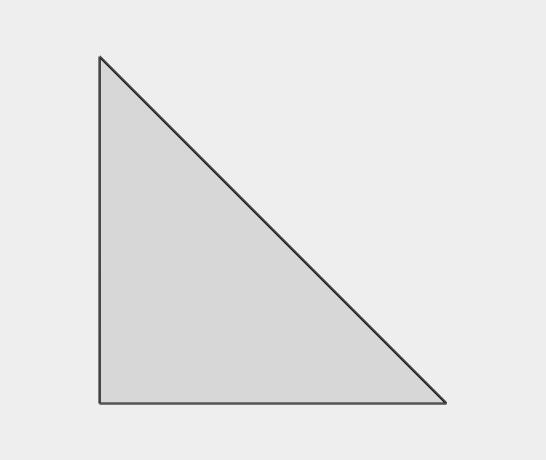}
\includegraphics[scale=0.25]{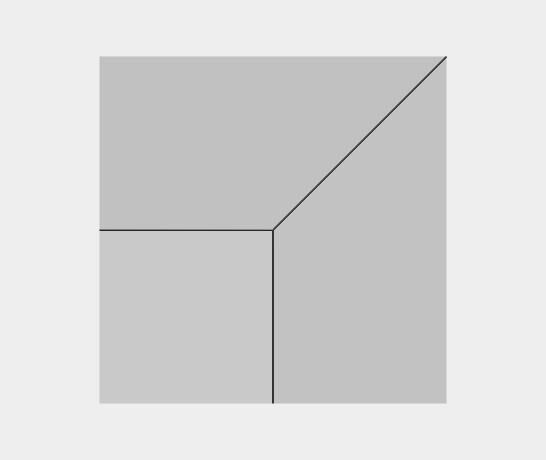}
\caption{
The tropical projective plane $\TP^2$ on the left and its normal fan on the right.}\label{fig:TP2andfan}
\end{figure}


A rational polyhedron in $Y$ is the closure in $Y$ of a rational polyhedron in some stratum $Y_{\rho}$.
Therefore the polyhedra in $Y$ are always   closed.
A   \emph{polyhedral complex} $Z$ in a tropical toric variety $Y$ is a collection of polyhedra in  $Y$
such that   $Z \cap Y_{\rho}$ is   a polyhedral complex in $Y_{\rho} \cong \R^{\text{codim} \rho}$ for every cone $\rho $ of $\Sigma$ and satisfying: 
\begin{enumerate}
\item for a polyhedron $\sigma \in Z$, if $\tau$ is a face of $\sigma$, which is denoted $\tau \subset \sigma$, we have $\tau \in Z$; 
\item  for $\sigma, \sigma' \in Z$, if $\tau = \sigma \cap \sigma'$ is non-empty then $\tau$ is a face of both $\sigma$ and $\sigma'$. 
\end{enumerate}
For a polyhedron $\sigma$ in $Y$ we define $\sed(\sigma)$ to be $\sed(y)$ for any $y $ in the relative interior of $\sigma$. 
This is a generalization of  the notion of sedentarity from \cite[Section 5.5]{BIMS} to tropical toric varieties beyond $\TP^{n+1}$. 
Two polyhedral complexes are \emph{combinatorially isomorphic} if they are isomorphic as posets under inclusion.

\begin{definition}\label{def:proper}
A polyhedral complex $Z$ is \emph{proper in $Y$}  if for each cell $\sigma$ and each cone $\rho$ such that $\sigma\cap Y_\rho\neq\emptyset$, one has $\dim\sigma\cap Y_{\rho}=\dim (\sigma)-\dim (\rho)$. 
\end{definition} 

If $\sigma$ is  
 a polyhedron in $Y$ which is the closure of a polyhedron in $Y_0$ then $\sigma \cap Y_{\rho} \neq \emptyset$ if and only if 
the recession cone of $\sigma$ intersects $\relint (\rho)$ 
 \cite[Lemma 3.9]{OR}. The same lemma also shows that  if $\sigma \cap Y_{\rho} \neq \emptyset$, then 
$\sigma \cap Y_{\rho} = \pi_{0 \rho}(\sigma \cap Y_0  )$.
Therefore, if a polyhedral complex $Z$ is proper in $Y$ and $\sigma $ is a face of $Z$ such that $\relint \rho  \subset Y_{\rho}$ where $\rho \neq 0$, then there exists at most one face of sedentarity $0$ of $Z$ containing $\sigma$ as a face. This is because such a face must be of dimension $\dim \sigma + \dim \rho$ since $Z$ is proper and the image of such a face under $\pi_{0, \rho}$ is $\sigma$ where $\dim \Ker \pi_{0, \rho} = \dim \rho$.

A polyhedral complex $Z$ in $Y$  is \emph{rational} if  $Z \cap Y_{\rho}$ is  a rational polyhedral complex for every stratum $Y_\rho$. 
 For $\sigma$ a cell of $Z$ 
we use $\relint{\sigma}$ to denote its relative interior.

\subsection{Tropical hypersurfaces}
A tropical hypersurface $X$ in $\R^{n+1}$ is a weighted rational polyhedral complex of codimension one which satisfies the balancing condition well-known in tropical geometry. A tropical hypersurface in $\R^{n+1}$ is defined by a tropical polynomial $f$. As a polyhedral complex, a tropical hypersurface $X$ is dual to a regular subdivision of the Newton polytope of $f$, and this subdivision is also induced by the polynomial $f$. 
A tropical hypersurface $X$ in $\R^{n+1}$ is \emph{non-singular} if it is dual to a \emph{primitive} regular  triangulation of its Newton polytope.
For the definitions and properties of tropical hypersurfaces in $\R^{n+1}$ and the dual subdivisions  of their Newton polytopes we refer the reader to  \cite[Chapter 3]{MacStu} and \cite[Section 5.1]{BIMS}.  Recall that  for  Theorem \ref{thm:lef} to hold for standard tropical homology, the  tropical hypersurface is required to  have full-dimensional Newton polytope. See Example \ref{ex:NPnotfull} for a situation where this assumption is necessary. 

If  $Y$ is a tropical toric variety of dimension $n+1$,  the closure in $Y$ of any tropical hypersurface $X_0 \subset \R^{n+1}$ is a tropical hypersurface in $Y$. 
A tropical hypersurface $X$  in $Y$ is \emph{non-singular} if for every open toric stratum $Y_{\rho}$ the intersection  $X_{\rho}: =X\cap Y_{\rho}$ is a non-singular tropical hypersurface in $Y_{\rho} \cong \R^{n+1 -\dim\rho}$. In particular, if $X_{\rho}: =X\cap Y_{\rho}$ is non-singular it is defined by a tropical polynomial $f_{\rho}$ and $X_{\rho}$ is dual to a \emph{primitive} regular  triangulation of the  Newton polytope of $f_{\rho}$. 
We always consider the polyhedral structure on $X\cap\R^{n+1}$ which is dual to the regular subdivision of its Newton polytope. If $X$ is non-singular in $Y$, then $X$ equipped with this polyhedral structure is proper in $Y$. 
When considering a tropical hypersurface $X$ contained in a toric variety $Y$, we always use the polyhedral structure on $Y$ obtained from refining by $X$.

Let $\gamma$ be a polyhedron of dimension $s$ and $\sed(\gamma) = 0$ in a tropical toric variety $Y$.  For each cone $\rho$  in the fan  $\Sigma$ defining $Y$, 
set  $\gamma_{\rho} := \gamma \cap Y_{\rho}$ and define 
 $$\gamma^{\circ} := \bigsqcup_{\rho} \relint \gamma_{\rho}.$$
If we assume  that $\gamma$ intersects   the boundary of $Y$ properly, a face $\sigma$ of $\gamma^o$ of dimension $q$  is necessarily of sedentarity order 
$\sed(\sigma)  =  \dim \gamma- q$.

To prove the tropical version of the Lefschetz hyperplane section theorem we require the following additional assumption on $X$. 
With the exception of Theorem \ref{thm:Epoly}, we will always require that  $X$ is  combinatorially ample in $Y$.

\begin{definition}\label{def:nondeg} 
A tropical hypersurface $X$ in an $n+1$ dimensional  toric variety $Y$ is \emph{combinatorially ample} if for every face $\gamma$  of dimension $n+1$ of $Y$, considered with the refinement given by $X$,  the polyhedral complex $\gamma^o$ is combinatorially isomorphic to a product of copies of $\T$ and $\R$.  
\end{definition}

Suppose that a  tropical polynomial $f$ defines a non-singular tropical hypersurface $X_0$ in $\R^{n+1}$. If the Newton polytope of $f$   is full dimensional and the dual fan of the polytope defines a non-singular tropical toric variety $Y$, then the compactification of $X_0$ in $Y$ is non-singular and  combinatorially ample.

\begin{remark}
When $Y$ is a compact tropical toric variety, the condition that a non-singular tropical hypersurface $X$ is  combinatorially ample in $Y$ implies that $X \cap Y_{\rho} \neq \emptyset$ for every  $1$-dimensional stratum $Y_{\rho}$ of $Y$.  
Let $Y_{\C}$ denote the compact complex toric variety obtained from the same fan as $Y$. 
If $X_{\C}$ is any non-singular complex hypersurface in $Y_{\C}$ with the same Newton polytope as the tropical hypersurface $X$, then 
$X_{\C}$ also has non-empty intersection with every $1$-dimensional stratum $Y_{\C, \rho}$ of $Y_{\C}$. 
By the Kleinman condition and the Toric Cone Theorem \cite[page 254]{toricMori},  the hypersurface $X_{\C}$ is an ample Cartier divisor.  If in addition, the tropical hypersurface intersects the boundary of $Y$ properly, then we may choose  $X_{\C}$ to be a $T$-Cartier divisor. By \cite[Exercise 2 Page 72]{FultonToric}, the normal fan of the  Newton polytope of the defining equation for $X_{\C}$ must be the same as the fan defining the toric variety $Y_{\C}$. Therefore, we can conclude that the normal fan of the Newton polytope of  $X$ is the same as the fan defining the tropical toric variety $Y$. 
\demo
\end{remark}

The following example shows that the  assumption that the tropical hypersurface be combinatorially ample  is necessary for Theorem \ref{thm:lef} to hold. 

\begin{example}\label{ex:blowup}
Here is a counter example to Theorem \ref{thm:lef} when we drop the condition of combinatorial ampleness from Definition \ref{def:nondeg}. 
Consider the standard tropical hyperplane $X^o \subset \R^{n+1}$. The case when $n=2$ is depicted in the left of Figure \ref{fig:blowup}. 
Let $\Sigma$ be the fan for $n+1$ dimensional projective space blown up in a toric fixed point, and let  $Y$ be the tropical toric variety defined by $\Sigma$. Let $X$ denote the compactification of $X^o$ in $Y$. Then it can be computed that  $\rank H_{1}(X, \mathcal{F}_1^X) = 1$ and $
\rank H_{1}(Y, \mathcal{F}_1^Y)  = 2$, so the map $H_{1}(X, \mathcal{F}_1^X)  \to H_{1}(Y, \mathcal{F}_1^Y) $  is not an isomorphism when $n > 2$.   
The connected component of $Y \backslash X$ containing the stratum of $Y$ dual to the ray of $\Sigma$ corresponding to the exceptional divisor of the blow up does not satisfy the condition to be combinatorially ample. 
The complex geometric version of the same scenario also fails the Lefschetz hyperplane section theorem,  since the hypersurface of the toric variety is not ample. 
\demo
\end{example}

\begin{figure}

\includegraphics[scale=0.28]{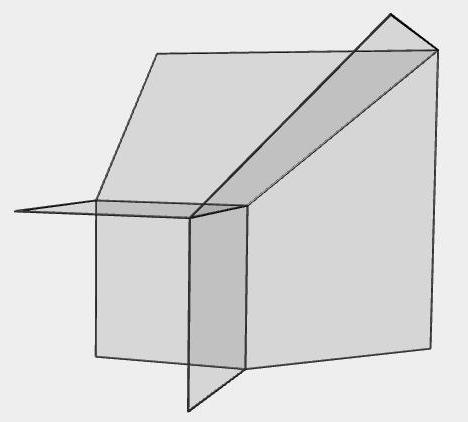}
\includegraphics[scale=0.4142]{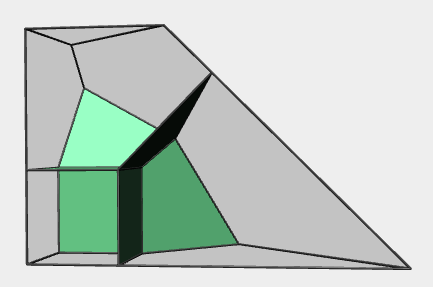}

\caption{The standard tropical hyperplane in $\R^3$ on the left its closure in the tropical toric variety described in Example \ref{ex:blowup} on the right.  }\label{fig:blowup} 
\end{figure}

To prove Lefschetz hyperplane section theorem in the case of standard tropical homology, we also need the following assumption on the topological pair $(Y,X)$.
\begin{definition}
\label{def:cellcomplex}
Let $Y$ be a tropical toric variety and let $X\subset Y$ be a tropical hypersurface. We say that the pair $(Y,X)$ is a cellular pair if the cellular structure induced by $X$ on the one-point compactification $\hat{Y}$ of $Y$ is a regular CW-complex. More precisely, for any cell $\sigma$ of $\hat{Y}$ of dimension $k$, the pair $(\sigma, \mathrm{int}(\sigma))$ is homeomorphic to the pair $(B^k,\mathrm{int}(B^k))$, where $B^k$ is the closed Euclidean ball of dimension $k$.  
\end{definition}

Requiring $(Y, X)$ to be a cellular pair implies that $X$ and $Y$ equipped with the polyhedral structure induced by $X$ are both cellular complexes in the sense of  \cite{ShepardThesis} and \cite[Chapter 4]{CurryThesis}. This topological condition is required to use the cellular description of cosheaf homology groups from \cite{CurryThesis}.

\begin{example}
There are examples of tropical hypersurfaces in toric varieties which are not cellular pairs. 
For example, consider $X$ to be supported on the line $ x= 0$ in $\R^2$. Then the one point compactification of $(\R^2, X)$ is not a CW-complex. 
In fact,  if $X$ is a tropical hypersurface in $\R^{n+1}$, then $(\R^{n+1}, X)$ is a cellular pair if and only if the Newton polytope of $X$ is full dimensional.

There exist tropical hypersurfaces in $\T^{n+1}$ which do not intersect the boundary of $\T^{n+1}$. For example, let  $X\subset \T^2$ be the tropical curve with three rays in directions $(-2,1)$, $(1,-2)$ and $(1,1)$. In this case, the pair  $(\T^{n+1}, X)$  is not a cellular pair, though $X$ may be combinatorially ample in $\T^{n+1}$. 
However, 
if $Y$ is a compact tropical toric variety and $X$ is a hypersurface which intersects the boundary of $Y$ transversally then $(Y ,X)$ is a cellular pair. 
\end{example}


The next example shows that full-dimensionality of the Newton polytope is an  essential for the Lefschetz theorem to hold  for standard homology. 

\begin{example}\label{ex:NPnotfull}
Consider the case when the Newton polytope of $X$ is an interval of lattice length equal to $1$. Then the tropical hypersurface $X$ is a (classical) affine subspace of $Y = \R^{n+1}$ of dimension $n$, therefore $X = \R^{n}$. Upon subdividing  $X = \R^n$ and $Y = \R^{n+1}$ so that they form a cellular pair, or using singular tropical homology, we can compute the standard tropical homology groups to be: 
$$H_q(X;\F^{X}_p) = 
\begin{cases}
\bigwedge^p \Z^{n}  \text{ if } q =  0 ,\\
0  \text{ if } q \neq 0
\end{cases}
\text{ and \ \ }
H_q(Y;\F^{Y}_p) = 
\begin{cases}
\bigwedge^p \Z^{n+1}  \text{ if } q = 0,\\
0  \text{ if } q  \neq 0. 
\end{cases}$$
Whereas, the Borel-Moore homology groups are 
$$H_q^{BM}(X;\F^{X}_p) = 
\begin{cases}
\bigwedge^p \Z^{n}  \text{ if } q =  n ,\\
0  \text{ if } q \neq n
\end{cases}
$$
and
$$H_q^{BM}(Y;\F^{Y}_p) = 
\begin{cases}
\bigwedge^p \Z^{n+1}  \text{ if } q = n+1,\\
0  \text{ if } q  \neq n+1. 
\end{cases}$$
We see that the conclusion of the Lefschetz section theorem as stated in Theorem \ref{thm:lef} does not hold for the standard tropical homology groups, however there is no contradiction for the Borel-Moore homology groups.  
\end{example}

\subsection{Tropical homology}\label{sec:tropicalHomology}

A polyhedral complex $Z$ has the structure of a category. The objects
of this category are the cells of $Z$  and there is a morphism $\tau \rightarrow \sigma$ if 
the cell $\tau$ is included in $\sigma$. We
use the notation $Z^{\textrm{op}}$ to denote the category that has the same objects as $Z$, and 
with morphisms corresponding to the morphisms of $Z$ but with their directions reversed.
 Let $\text{Mod}_{\Z}$ denote the category of modules over $\Z$. 
We now define cellular sheaves and cosheaves of $\Z$-modules on $Z$. 


\begin{definition}\label{def:cosheaf}
Given a polyhedral complex $Z$, a  \emph{cellular cosheaf} $\mathcal{G}$ is a  functor
$$ \mathcal{G}\colon Z^{\text{op}} \to \Mod_{\Z}.$$ 
\end{definition}
In particular, a cellular cosheaf consists of a $\Z$-module $\mathcal{G}(\sigma)$ for each cell  $\sigma$  in $Z$ together 
with a morphism $\iota_{\sigma \tau} \colon \mathcal{G}(\sigma) \to
\mathcal{G}(\tau)$ for each pair $\tau$, $\sigma$ when $\tau$ is a face of $\sigma$. 
Since  $\mathcal{G}$ is a functor, for  any triple of cells $\gamma \subset \tau \subset \sigma$ the  morphisms $\iota$ commute 
in the sense that
$$
\iota_{\sigma \gamma }\ =\ \iota_{\tau \gamma} \circ \iota_{\sigma \tau }
$$
Dually, a  cellular sheaf $\mathcal{H}$ is a morphism $\mathcal{H} \colon Z \to \Mod_{\Z}$. Therefore, 
for each $\sigma$ there is a $\Z$-module $\mathcal{H}(\sigma)$ and there are morphisms 
 $\rho_{\tau \sigma} \colon \mathcal{H}(\tau) \to \mathcal{H}(\sigma)$ when $\tau$ is a face of $\sigma$.

The cosheaves that we use throughout the text will always be free $\Z$-modules unless it is otherwise  stated. We will now define the integral multi-tangent modules. 
We refer the reader to \cite{BIMS}, \cite{KSW}, and \cite{MZ}  for the definitions of  the multi-tangent spaces with rational and real coefficients. 

Let $Y$ be the non-singular tropical toric variety corresponding to a fan $\Sigma$. 
Let $\rho$ be a simplicial cone of $\Sigma$ which has rays in primitive integer directions $r_1, \dots , r_s$. Then we define 
$$T(Y_{\rho}) := \frac{\R^{n+1} }{\langle r_1, \dots , r_s \rangle} \quad \quad \text{and} \quad \quad    T_{\Z}(Y_{\rho}) := \frac{\Z^{n+1}}{ \langle r_1, \dots , r_s \rangle}.$$
 
If  $Y_{\rho}$ and $Y_{\eta}$ are a pair of strata such that $Y_{\eta} \subset \overline{Y}_{\rho}$ then the generators of the cone $\eta$ contain the generators of the cone $\rho$ and thus we get projection maps:
\begin{equation}\label{eq:projmaps}
\pi_{\rho \eta} \colon T(Y_{\rho})  \to T(Y_{\eta}) \quad \quad \text{and} \quad \quad  \pi_{\rho \eta} \colon T_{\Z}(Y_{\rho})  \to T_{\Z}(Y_{\eta}).
\end{equation}


Recall that a  polyhedron in $Y$ is the closure in $Y$ of a rational  polyhedron in some stratum $Y_{\rho} \cong \R^{k}$. 
Therefore, if $\sigma$ is a polyhedron in $Y$, then $\relint{\sigma}$ is contained in some stratum $Y_{\rho}$ of $Y$.  
Let $T(\sigma)$ denote the tangent space to the relative interior of $\sigma$ in $T(Y_{\rho})$ when $\relint{\sigma}$ is contained in $Y_{\rho}$. When $\sigma$ is rational there is a full rank lattice $T_{\Z}(\sigma) \subset T(\sigma)$. 

\begin{definition}\label{def:Fp}
Let $Z$ be a rational polyhedral complex  in a tropical toric variety $Y$.
The integral  $p$-multi-tangent space of $Z$ is a  cellular cosheaf $\F_p$ of $\Z$-modules on $Z$. For a face $\tau$ of $Z$ such that $\relint \tau$  is contained in the stratum $Y_{\rho}$ we have 
\begin{equation}
\F^Z_p(\tau) = \sum_{\substack{\tau \subset \sigma \subset Z_{\rho}}} \bigwedge^p T_{\Z}(\sigma). 
\end{equation}
For   $\tau\subset\sigma$, the maps of the cellular cosheaf 
$i_{\sigma \tau} \colon \F^Z_p(\sigma) \to \F^Z_p(\tau)$ 
are induced by natural inclusions when $\Int(\sigma)$ and $\Int(\tau)$ are in the same stratum of $Y$. Otherwise are induced by the quotients $\pi_{\rho \eta}$ composed with inclusions when $\Int(\sigma) \subset Y_{\rho}$ and $\Int(\tau) \subset Y_{\eta}$. 
\end{definition}

\begin{example}\label{ex:FpY}
Let $Y$ be a toric variety. Consider the polyhedral structure on $Y$ given by $Y=\bigcup \overline{Y}_\rho$ induced by the toric stratification. One has 
$$
\F^Y_p(\overline{Y}_\rho)=\bigwedge^p T_{\Z}(Y_\rho)\cong \bigwedge^p \Z^{\text{codim} \rho},
$$ 
and the cosheaf maps are the maps induced by the projection maps $\pi_{\rho\eta}$ defined in (\ref{eq:projmaps}).
\demo
\end{example}

\begin{example}\label{ex:FpHyperplane} 
Let $H_n \subset \R^{n+1}$ denote the standard tropical hyperplane in $\R^{n+1}$. Then $H_n$ is the tropical variety defined by the tropical polynomial function 
$$f(x_1, \dots, x_{n+1}) = \max \{0, x_1, \dots, x_{n+1} \}.$$
Its Newton polytope is the standard simplex in $\R^{n+1}$. 

The tropical hypersurface $H_n$ is a fan of dimension $n$, it has $n+2$ rays that are  in  the directions $-e_1, \dots, - e_{n+1}$, and $e_1+ \dots + e_{n+1}$. See the left hand side of Figure  \ref{fig:blowup} for the standard hyperplane in $\R^3$. Every subset of the rays of size less than or equal to $n$ spans a cone of $H_n$. 
If $v$ is the vertex of $H_n$, then $ \F_p^{H_n}(v)  = \Lambda^p \Z^{n+1}$, for $0 \leq p \leq n$,  and  
$ \F_p^{H_n}(v) = 0 $ otherwise. 
Moreover, we have 
$$\chi_v(\lambda):=\sum_{p=0}^n (-1)^p \rank \mathcal{F}^{H_n}_p(v )\lambda^p = (1-\lambda)^{n+1} - (-\lambda)^{n+1}.$$
\demo
\end{example}


\begin{example}\label{ex:FYFX}
Figure \ref{fig:FYFX} shows a tropical line $X$ contained in the tropical projective plane $\TP^2$ from Example \ref{ex:TPn}.
The polyhedral structure on $\TP^2$ induced by $X$ has $7$ vertices, $9$ edges, and $3$ faces of dimension $2$. 

For any face $\sigma$ of this polyhedral structure on $\TP^2$, the rank of $\F_p^{\TP^2}(\sigma)$ depends only on the dimension of the stratum of $\TP^2$ which contains $\relint(\sigma)$. 
If $\relint(\sigma)$ is contained in a stratum of $\TP^2$ of dimension $k$ then $\F_p^{\TP^2}(\sigma) \cong \bigwedge^p \Z^k$. 

The directions of the rays of the fan for $\TP^2$ are $$v_1 = (-1,0), \quad  v_2 = (0, -1), \quad  \text{ and } \quad v_3  = (1,1).$$
Referring to the labeling in Figure \ref{fig:FYFX}, we have 
$$\F_1^{X}(x)  = \langle v_1, v_2 , v_3 \rangle \cong \Z^2, \quad  \F_1^{X}(\sigma_i)  = \langle v_i  \rangle, 
\quad \text{ and } \quad 
 \F_1^{X}(\tau_i)  = 0.$$
When $p= 0$, we have $\F_0^X(\gamma) = \Z$ for all $\gamma$ in $X$ and $\F_p^X(\gamma) = 0$ for all $\gamma$ in $X$ when $p \geq 2$.
\demo
\end{example}

\begin{figure}
\includegraphics[scale=0.4]{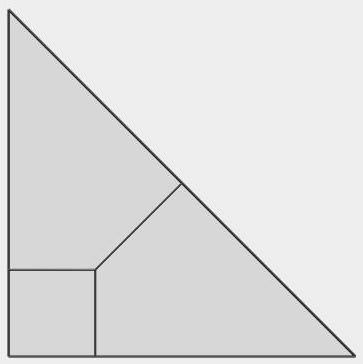}
\put(-130,45){$\sigma_1$}
\put(-80, 55){$\sigma_3$}
\put(-100, 10){$\sigma_2$}
\put(-100,30){$x$}
\caption{The tropical line $X$ in $\TP^2$ from Example \ref{ex:FYFX}.}\label{fig:FYFX}
\end{figure}

The following lemma about the structure of the cosheaves in the case of a non-singular tropical hypersurface will be useful later on.

\begin{lemma}\label{lem:KuennethFp}
Let $X$ be a non-singular tropical hypersurface in a tropical toric variety $Y$. If $\tau$ is a face of $X$ of dimension $q$ whose relative interior is contained in a stratum $Y_{\rho}$ of dimension $m$, then 
$$ \F^X_p(\tau) \cong  \bigoplus_{l = 0}^p \F_{p-l}^{H_{m-q-1}} (v) \otimes \bigwedge^l T_{\Z}(\sigma),$$
where $H_{m-q-1}$ is the standard tropical  hyperplane  of dimension $m-q-1$ in $\R^{m-q}$ and $v$ denotes its vertex. 

 If $\tau $ is a codimension one face of $\sigma $ in $X$ and $\relint(\tau)$ and $\relint(\sigma)$ are contained in the distinct strata $Y_{\rho}$ and $Y_{\eta}$, respectively, then the cosheaf map   $i_{\sigma \tau} \colon \F^{X} _p(\sigma) \to  \F^{X}_p(\tau)$ together with the above isomorphisms commute with the map  
\begin{equation}\label{eq:tensormap}
 \bigoplus_{l = 0}^p \F_{p-l}^{H_{m-q-1}} (v) \otimes \bigwedge^l T_{\Z}(\sigma   )  \to    \bigoplus_{l = 0}^p \F_{p-l}^{H_{m-q-1}} (v) \otimes \bigwedge^l T_{\Z}(\tau), 
\end{equation}
which is induced by the map $\id \otimes \pi_{\eta \rho}$ on each factor of the direct sum, where $ \pi_{\eta \rho} \colon \bigwedge^l T_{\Z}(\sigma   )  \to \bigwedge^l T_{\Z}(\tau)$ is from  Equation \ref{eq:projmaps}.
\end{lemma}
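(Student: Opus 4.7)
My plan is to exploit the product structure of the non-singular hypersurface $X \cap Y_\rho$ locally near $\tau$, and then to apply the standard decomposition of the wedge power of a direct sum. Since $X \cap Y_\rho$ is a non-singular tropical hypersurface in $Y_\rho \cong \R^m$, its dual subdivision is a primitive triangulation, so the face $\Delta_\tau$ dual to $\tau$ is a unimodular simplex of dimension $m - q$. This unimodularity implies that $T_\Z(Y_\rho)/T_\Z(\tau)$ is free of rank $m - q$, and that the image of the star of $\tau$ in this quotient agrees, after a unimodular change of coordinates, with the standard hyperplane $H_{m-q-1}$. Choosing a splitting $T_\Z(Y_\rho) = T_\Z(\tau) \oplus L$ with $L \cong \Z^{m-q}$ yields a bijection between the faces $\sigma$ of $X \cap Y_\rho$ containing $\tau$ and the faces $\sigma''$ of $H_{m-q-1}$ containing the vertex $v$, along with a direct-sum decomposition $T_\Z(\sigma) = T_\Z(\tau) \oplus T_\Z(\sigma'')$.

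Applying $\bigwedge^p(A \oplus B) \cong \bigoplus_{l=0}^p \bigwedge^l A \otimes \bigwedge^{p-l} B$ to each such $T_\Z(\sigma)$ and exchanging the finite sum over $\sigma$ with the direct sum over $l$ then gives
$$\F^X_p(\tau) = \sum_{\sigma \supset \tau} \bigwedge^p T_\Z(\sigma) = \bigoplus_{l=0}^p \bigwedge^l T_\Z(\tau) \otimes \sum_{\sigma''} \bigwedge^{p-l} T_\Z(\sigma'') = \bigoplus_{l=0}^p \F_{p-l}^{H_{m-q-1}}(v) \otimes \bigwedge^l T_\Z(\tau),$$
which is the first claimed isomorphism.

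For the second assertion, when $\tau$ is a codimension-one face of $\sigma$ with interiors in distinct strata $Y_\rho$ and $Y_\eta$, properness of the non-singular $X$ in $Y$ forces $\dim Y_\eta - \dim Y_\rho = 1$. The projection $\pi_{\eta\rho}$ then restricts to a surjection $T_\Z(\sigma) \twoheadrightarrow T_\Z(\tau)$ with one-dimensional kernel and descends to a canonical isomorphism $T_\Z(Y_\eta)/T_\Z(\sigma) \cong T_\Z(Y_\rho)/T_\Z(\tau)$, under which the local hyperplanes $H_{m-q-1}$ at $\sigma$ and at $\tau$, their vertices, and the bijection of faces are canonically identified. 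By Definition \ref{def:Fp}, $i_{\sigma\tau}$ is induced by $\pi_{\eta\rho}$ composed with inclusions. Choosing a splitting $L_\sigma$ at $\sigma$ and setting $L_\tau := \pi_{\eta\rho}(L_\sigma)$ as the splitting at $\tau$ makes the direct-sum decompositions of $T_\Z(\sigma')$ and $T_\Z(\pi_{\eta\rho}(\sigma'))$ compatible under $\pi_{\eta\rho}$, so that the cosheaf map acts on each summand $\F_{p-l}^{H_{m-q-1}}(v) \otimes \bigwedge^l T_\Z(\sigma)$ as the identity on the hyperplane factor and as $\bigwedge^l \pi_{\eta\rho}$ on the tangent factor, exactly as required.

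The main difficulty is that these direct-sum decompositions depend on a choice of splitting. I would circumvent this by working with the canonical filtration of $\bigwedge^p T_\Z(\sigma)$ whose $l$-th piece is spanned by $p$-wedges having at least $l$ factors in $T_\Z(\tau)$; its associated graded at level $l$ is canonically $\bigwedge^l T_\Z(\tau) \otimes \bigwedge^{p-l}(T_\Z(\sigma)/T_\Z(\tau))$, and the splittings only serve to identify $T_\Z(\sigma)/T_\Z(\tau)$ with $T_\Z(\sigma'')$. Since $\pi_{\eta\rho}$ preserves $T_\Z(\tau)$ and induces an isomorphism on the relevant quotients, it automatically respects these filtrations, and the induced map on associated graded pieces canonically recovers $\id \otimes \pi_{\eta\rho}$.
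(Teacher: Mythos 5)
Your argument is correct and follows essentially the same route as the paper's proof: both split $T_{\Z}(\sigma)$ as $T_{\Z}(\tau)$ plus a transversal factor identified with the corresponding cone of $H_{m-q-1}$ using primitivity of the dual triangulation, then distribute the wedge power over this splitting and observe that $\pi_{\eta\rho}$ respects it. The only cosmetic difference is that the paper realizes the transversal factor by an explicit integral affine slice $L$ through $\tau$ (so that $L\cap X$ is a copy of $H_{m-q-1}$ near its vertex), whereas you work with the quotient $T_{\Z}(Y_{\rho})/T_{\Z}(\tau)$ and add a filtration argument to make the identification independent of the chosen splitting.
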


\begin{proof}
Recall that $T_{\Z}(\tau)$ denotes the integral points in the tangent space of the face $\tau$.
Now let $L$ be a  $m- q$ dimensional affine subspace of $\R^{m} \cong Y_{\rho}$ defined over $\Z$ such that $L$ intersects all faces of $X_{\rho}$  that contain $\relint(\tau)$ transversally and that together $T_{\Z}(L)$ and $T_{\Z}(\tau)$ generate the lattice $T_{\Z}(Y_{\rho})$. 
By the above transversality assumption, 
the intersection $L' = L \cap X$ has a single vertex $v'$ contained in $\tau$.

For every $l$ there is a 
map 
$$ i_l \colon \F^{L'}_{p-l}(v') \otimes \bigwedge^l T_{\Z}(\tau)  \to \F^X_p(\tau),$$
given by taking the wedge product of the vectors in $ \F^{L'}_{p-l}(v)$ and  $\bigwedge^l T_{\Z}(\tau).$
Taking the direct sum of the maps $i_l$ for all $0\leq l \leq p$ gives a map 
\begin{equation}\label{eqn:kunniso}
\bigoplus_{l = 0}^p \F^{L'}_{p-l}(v') \otimes \bigwedge^l T_{\Z}(\tau)  \to \F^X_p(\tau).
\end{equation}
If $\sigma$ is a facet of $ X\cap Y_{\rho}$ containing the   face $\tau$, then by our assumptions on $L'$, we have 
$$
T_\Z(\sigma )\cong T_\Z(\tau)\oplus T_\Z(L'\cap\sigma).
$$
Therefore, 
$$
\F_p^X(\sigma) \cong \bigoplus_{l=0}^p \F^{L'\cap\sigma}_{p-l}(v') \otimes \bigwedge^l T_{\Z}(\tau).
$$
Now since $\F_p^X(\tau)$ is generated by all $\F_p^X(\sigma)$ for $\sigma$ a  facet containing  $\tau$, the map in Equation \ref{eqn:kunniso} is an isomorphism.

By the assumption that  $X$ is non-singular and  intersects the boundary of $Y$ properly, every non-empty stratum $X_{\rho} = Y_{\rho} \cap X$ is a non-singular tropical hypersurface in $\R^{m}$, where $m = n+1 -\dim \rho$. Therefore, the hypersurface $X_{\rho}$ is defined by a tropical polynomial $f_{\rho}$ and it is dual to a primitive regular  subdivision of the Newton polytope of $f_{\rho}$ which is induced by $f_{\rho}$. 
A face $\sigma$ of $X$ whose relative interior is contained in $X_{\rho}$ is dual to a face of the dual subdivision of $\Delta(f_{\rho})$, and since this dual subdivision  is primitive, the face dual to $\sigma$ is a simplex.  Therefore, near the vertex $v'$ the polyhedral complex $L'$ is up to an integral affine transformation the same as a neighborhood of the vertex $v$ of the tropical hyperplane $H_{m-q-1}$  and   we have $\F^{L'}_{p-l}(v')  \cong \F_{p-l}^{H_{m-q-1}}(v)$. This proves the isomorphism stated in the lemma. 

If $\tau$ is a face of $\sigma$, and  $\tau$ and $\sigma$ are contained in  $Y_\eta$ and $Y_\rho$ respectively, then we can write 
$T_{\Z}(Y_{\rho}) = T_{Z}(L_{\sigma}) \oplus T_{\Z}(\sigma)$ and 
$T_{\Z}(Y_{\eta}) = T_{\Z}(L_{\tau}) \oplus T_{\Z}(\tau)$, where $L_{\sigma}$ and $L_{\tau}$ are the linear spaces chosen in the argument  above to intersect $\sigma$ and $\tau$, respectively. Since the polyhedral structure on $X$ is proper in $Y$, the map $\pi_{\rho\eta} \colon
T_{\Z}(Y_{\rho}) \to T_{\Z}(Y_{\eta})$
restricts to an isomorphism between $T_{\Z}(L_{\sigma})$ and $T_{\Z}(L_\tau)$. Therefore, it also restricts  to an isomorphism between $\F^{L_{\sigma} \cap X}_{p}(v_{\sigma})$ and $\F^{L_{\tau} \cap X}_{p}(v_{\tau})$ for all $p$. 
The claim about the commutativity of the above isomorphisms with the maps in Equation (\ref{eq:tensormap}) and   $i_{\sigma \tau} \colon \F^{X} _p(\sigma) \to  \F^{X}_p(\tau)$  follows since $i_{\sigma \tau}$  is induced by projecting along a direction $\pi_{\rho \eta}$.
\end{proof}

\begin{corollary}\label{cor:EPpoly}
Let $X$ be a non-singular tropical hypersurface of a tropical toric variety $Y$. Let $\sigma$ be a face of $X$ of dimension $q$ whose relative interior is contained in stratum $Y_{\rho}$ of dimension $m$. Then the  polynomial defined by 
$$
\chi_\sigma(\lambda):=\sum_{p=0}^n (-1)^p \rank \mathcal{F}^X_p(\sigma)\lambda^p.
$$ 
is 
$$
\chi_\sigma(\lambda)=(1-\lambda)^{m}-(1-\lambda)^q(-\lambda)^{m-q}.
$$
\end{corollary}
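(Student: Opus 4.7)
The plan is to reduce the computation of $\chi_\sigma(\lambda)$ to the Euler characteristic polynomial of the multi-tangent cosheaf of a standard tropical hyperplane at its vertex, via the tensor decomposition furnished by Lemma \ref{lem:KuennethFp}, and then recognize the result as a product of two polynomials, each of which we already know in closed form.

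First, I would apply Lemma \ref{lem:KuennethFp} to obtain the decomposition
\[
\mathcal{F}^X_p(\sigma) \;\cong\; \bigoplus_{l=0}^p \mathcal{F}^{H_{m-q-1}}_{p-l}(v) \otimes \bigwedge^l T_\Z(\sigma),
\]
where $v$ is the vertex of the standard tropical hyperplane $H_{m-q-1} \subset \R^{m-q}$. Since $\sigma$ has dimension $q$, the lattice $T_\Z(\sigma)$ has rank $q$, so $\rank \bigwedge^l T_\Z(\sigma) = \binom{q}{l}$. Taking ranks in the above decomposition yields
\[
\rank \mathcal{F}^X_p(\sigma) \;=\; \sum_{l=0}^p \binom{q}{l}\,\rank \mathcal{F}^{H_{m-q-1}}_{p-l}(v).
\]

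Next, I would substitute this into the defining sum for $\chi_\sigma(\lambda)$ and use the factorization $(-\lambda)^p = (-\lambda)^l (-\lambda)^{p-l}$ to split the double sum as a product:
\[
\chi_\sigma(\lambda) \;=\; \sum_{p,\,l} \binom{q}{l}(-\lambda)^l \cdot \rank \mathcal{F}^{H_{m-q-1}}_{p-l}(v)\,(-\lambda)^{p-l} \;=\; \left(\sum_{l=0}^q \binom{q}{l}(-\lambda)^l\right)\chi_v(\lambda),
\]
where $\chi_v(\lambda)$ is the polynomial from Example \ref{ex:FpHyperplane} associated to $H_{m-q-1}$.

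Finally, the binomial theorem collapses the first factor to $(1-\lambda)^q$, and Example \ref{ex:FpHyperplane} (with $n$ replaced by $m-q-1$) gives $\chi_v(\lambda) = (1-\lambda)^{m-q} - (-\lambda)^{m-q}$. Multiplying these two expressions yields
\[
\chi_\sigma(\lambda) \;=\; (1-\lambda)^q\bigl[(1-\lambda)^{m-q} - (-\lambda)^{m-q}\bigr] \;=\; (1-\lambda)^m - (1-\lambda)^q(-\lambda)^{m-q},
\]
as required. The only substantive step here is Lemma \ref{lem:KuennethFp}, which has already been established; everything else is a straightforward product-of-generating-functions manipulation, so I do not anticipate any real obstacle.
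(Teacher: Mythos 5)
Your proof is correct and follows exactly the paper's own argument: the paper likewise invokes Lemma \ref{lem:KuennethFp} together with the rank formula for $\F^{H_{m-q-1}}_{p-l}(v)$ from Example \ref{ex:FpHyperplane} to obtain $\chi_\sigma(\lambda)=(1-\lambda)^q[(1-\lambda)^{m-q}-(-\lambda)^{m-q}]$ and then simplifies. You have merely spelled out the Cauchy-product manipulation that the paper leaves implicit.
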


\begin{proof}
Using the isomorphism in Lemma \ref{lem:KuennethFp}, together with the formula for the ranks of $\F_p^{H_{n-q}}(v)$ from Example \ref{ex:FpHyperplane}, we obtain 
$$\chi_\sigma(\lambda)  = (1-\lambda)^q [  (1-\lambda)^{m-q} - (-\lambda)^{m-q}].$$
The statement of the corollary follows upon simplification.
\end{proof}

In order to define the cellular tropical homology groups of a polyhedral complex $Z$  we must first fix  orientations of each of its cells. 
Let $Z^q$ denote the cells of dimension $q$ of $Z$. 
We define an \emph{orientation map} on pairs of cells, 
$\mathcal{O} \colon Z^{q} \times Z^{q-1} \to \{0, 1, -1\}$ by:

\begin{equation}
\mathcal{O}(\sigma, \tau) := 
\begin{cases}
0 \text{ if $\tau \not \subset \sigma$}, \\
1 \text{ if the orientation of $\tau$ coincides with its orientation in $\partial \sigma$,} \\
-1 \text{ if the orientation of $\tau$ differs from its orientation in $\partial \sigma$.}
\end{cases}
\end{equation}

\begin{definition}\label{def:homogeneral} Let $Z$ be a polyhedral complex and $\mathcal{G}$ a cellular cosheaf on $Z$. 
The groups of  \emph{cellular $q$-chains in $Z$ with coefficients in $\mathcal{G}$} are 
$$C_q(Z ; \mathcal{G}) = \bigoplus_{\substack{\dim \sigma = q \\ \sigma \text{ compact}}} \mathcal{G}(\sigma).$$
The boundary maps $\partial \colon C_q(Z ;\mathcal{G}) \to C_{q-1}(Z ;\mathcal{G}) $ are given by the direct sums of the cosheaf maps $i_{\sigma \tau}$ for $\tau\subset \sigma$ composed with the orientation maps $\mathcal{O}_{\sigma \tau}$ for all $\tau$ and $\sigma$. 
The \emph{$q$-th homology group of $\mathcal{G}$} is 
$$H_q(Z; \mathcal{G}) = H_q(C_{\bullet}(Z ;\mathcal{G})).$$
\end{definition}

\begin{definition}\label{def:homoBMgeneral}
 Let $Z$ be a polyhedral complex and $\mathcal{G}$ a cellular cosheaf on $Z$.  
The groups of \emph{Borel-Moore cellular $q$-chains in $Z$ with coefficients in $\mathcal{G}$} are 
$$C^{BM}_q(Z ; \mathcal{G}) = \bigoplus_{\dim \sigma = q} \mathcal{G}(\sigma).$$
The boundary maps $\partial \colon C^{BM}_q(Z ; \mathcal{G}) \to C^{BM}_{q-1}(Z ; \mathcal{G}) $ are given by the direct sums of the cosheaf maps $i_{\sigma \tau}$ for $\tau\subset \sigma$ with the orientation maps $\mathcal{O}_{\sigma \tau}$ for all $\tau$ and $\sigma$. 
The \emph{$q$-th  homology group of $\mathcal{G}$} is 
$$H^{BM}_q(Z; \mathcal{G}) = H_q(C^{BM}_{\bullet}(Z ;\mathcal{G})).$$
\end{definition}

\begin{definition}\label{def:trophomol}
The \emph{$(p,q)$-th tropical homology group} is 
\begin{equation}\label{usualhomo}
H_q(Z; \mathcal{F}^Z_p) = H_q(C_{\bullet}(Z ;\mathcal{F}^Z_p)).
\end{equation}
The \emph{$(p,q)$-th Borel-Moore tropical homology group} is 
$$H^{BM}_q(Z; \mathcal{F}^Z_p) = H_q(C^{BM}_{\bullet}(Z ;\mathcal{F}^Z_p)).$$
\end{definition}

\begin{remark}\label{rem:invariance}
Both the Borel-Moore and the standard tropical cellular homology groups of cosheaves are defined with respect to a fixed polyhedral structure. 
Let $X$ be a hypersurface in a toric variety $Y$, and consider the polyhedral structure on $X$ coming from the dual subdivision of its Newton polytope and the polyhedral structure on $Y$ induced by $X$. When $(Y, X)$ is a cellular pair in the sense of Definition \ref{def:cellcomplex} then the  cellular homology groups from (\ref{usualhomo}) of $X$ or $Y$ are isomorphic to singular tropical homology groups of $X$ or $Y$, respectively  \cite[Theorem 7.3.2]{CurryThesis}.

On the other hand, even when $(Y,X)$ is  not a cellular pair, the Borel-Moore tropical cellular homology groups of $X$ and $Y$ are always isomorphic to the Borel-Moore singular homology groups of $X$ and $Y$, respectively. In fact, one can always find a compactification of the pair $(Y, X)$ such that $(\overline{X}, X)$, $(\overline{Y}, Y)$ and $(\overline{Y},\overline{X})$ are cellular pairs. The  Borel-Moore homology groups of $X$ are isomorphic to the relative homology groups of the pair $(\overline{X}, X)$, and similarly for $Y$ and $(\overline{Y}, Y)$.

\end{remark}

If $\mathcal{G}$ is a  cellular sheaf on a polyhedral complex $Z$, then the group of  $q$ cochains  and $q$ cochains with compact support of $\mathcal{G}$ are respectively, 
$$C^q(Z ; \mathcal{G}) = \bigoplus_{\substack{\dim \sigma = q \\ \sigma \text{ compact}}} \mathcal{G}(\sigma)   \qquad \text{and} \qquad C_{c}^q(Z ; \mathcal{G}) = \bigoplus_{\dim \sigma = q} \mathcal{G}(\sigma).$$

  The complex of cochains and cochains with compact support of $\mathcal{G}$ are formed from the cochain groups together with the   restriction maps $r_{\tau \sigma}$ combined with the orientation map $\mathcal{O}$ as in the case for a cosheaf. The cohomology groups of $\mathcal{G}$ are defined as the cohomology of these complexes.

\begin{definition}\label{def:tropcohomo}
Let $Z$ be a polyhedral complex and $\mathcal{G}$ a cellular sheaf on $Z$. 
The cohomology groups and cohomology groups with compact support of $\G$ are respectively, 
$$   H^q( Z, \mathcal{G}):= H^q(  C^{\bullet}(Z ; \mathcal{G}))   \quad \text{and} \quad H_c^q( X, \mathcal{G}):= H^q( C_{c}^{\bullet}(Z ; \mathcal{G})).$$
\end{definition}

\begin{remark}\label{rem:trophomocohomo}
Since the multi-tangent modules are free $\Z$-modules we have 
$$C^q(Z ; \mathcal{F}_Z^p)  = \Hom(C_q(Z ; \mathcal{F}^Z_p), \Z) \quad \text{and} \quad  C^q_c(Z ; \mathcal{F}_Z^p)  = \Hom(C^{BM}_q(Z ; \mathcal{F}^Z_p), \Z).$$
Therefore for  $Z$ a non-singular tropical toric variety or a non-singular  tropical hypersurface of a toric variety, the tropical cohomology groups and cohomology groups with compact support are respectively, 
$$   H^q( X, \F^p):= H^q( \Hom(C_{\bullet}(Z ; \mathcal{F}^Z_p), \Z))   $$ 
and 
$$H_c^q( X, \F^p):= H^q( \Hom(C^{BM}_{\bullet}(Z ; \mathcal{F}^Z_p), \Z)).$$
\demo
\end{remark}

\section{Tropical Lefschetz hyperplane section theorem}

A tropical hypersurface $X$ in a tropical toric variety $Y$  induces a polyhedral structure on $Y$. Unless it is explicitly mentioned we will use this polyhedral structure on $Y$ to compute its cellular tropical homology groups.  
Following Remark \ref{rem:invariance}, we obtain  the same homology groups using this polyhedral structure as if we chose the polyhedral structure from the stratification of $Y$ dual to the polyhedral fan defining it, see  Example \ref{ex:FpY}. 
Notice that if $\sigma$ is a face of $X$ whose relative interior is contained in $Y_\rho$ then we have
$$\F_p^Y(\sigma) = \F^Y_p(\overline{Y}_\rho)=\bigwedge^p T_{\Z}(Y_\rho)\cong \bigwedge^p \Z^{\text{codim} \rho}.$$



To prove Theorems \ref{thm:lef} and  \ref{thm:singularcase},  we consider two exact sequences of cosheaves. The first is the exact sequence of cosheaves on $Y$ given by, 
\begin{equation}
\label{seq}
 0 \to \F^{Y}_p|_X \to \F^Y_p \to \mathcal{Q}_p \to 0.
\end{equation}
The second one consists of cosheaves on $X$ and is given by, 
\begin{equation}\label{seq:N}
0 \to \F^{X}_p \to \F^Y_p|_X \to \mathcal{N}_p \to 0.
\end{equation}

Since we consider the polyhedral structure on $Y$ induced by $X$, each face $\sigma$ of $X$ is also a face of the polyhedral structure on $Y$ and  there is a $\Z$-module $\F_p^Y(\sigma)$. The cosheaf $\F^{Y}_p|_X$ considered as a cosheaf on $Y$ assigns the $\Z$-module $\F_p^Y(\sigma)$ when $\sigma$ is a face of $X$ and it assigns $0$ if $\sigma$ is a face of $Y$ but not of  $X$. 
When we consider  $\F^{Y}_p|_X$ as a cosheaf on $X$, then it simply assigns  $\F^{Y}_p(\sigma) $ for all $\sigma \in X \subset Y$. 

The injective maps on the left hand side of both cosheaf sequences are both natural inclusions on the stalks over faces. 
The cosheaves $\mathcal{Q}_p$ and $\mathcal{N}_p$ are defined as the cokernel cosheaves in both short exact sequences. 
The cosheaves  $\F_p^Y|_X$, $\F_p^Y$,  and $\F_p^X$ are all free $\Z$-modules. Moreover, since  $X$ is a non-singular tropical hypersurface, the cosheaves $\Q_p$ and $\N_p$ are also cosheaves of free $\Z$-modules.

\begin{example}\label{ex:NpQponL}
 Consider again the tropical line $X$ in $\TP^2$ from Example \ref{ex:FYFX} and   Figure \ref{fig:FYFX}.
Then the cosheaf $\Q_p$ on $\TP^2$ assigns the trivial $\Z$-module to any face of $\TP^2$ which is also a face of $X$. For $\sigma$ a face of $\TP^2$ and not a face of $X$, then $\Q_p(\sigma) = \F_p^{\TP^2}(\sigma)$. The inclusion maps $\Q_p(\sigma) \to \Q_p(\tau)$ are either $0$ or equal to $\iota_{\sigma \tau} \colon \F_p^{\TP^2}(\sigma) \to \F_p^{\TP^2}(\tau)$. 

For $x$ the unique vertex of sedentarity $0$ of $X$, the 
 cosheaf $\N_p$ assigns  $\N_p(x) = 0$, for all $p < 2$. 
 When $p = 2$, we have  $\N_p(x)  = \bigwedge^2 \Z^2$.

 For an edge $\tau_i$ of $X$ the $\Z$-module $\N_p(\tau_i)$ is a free module of rank $1$, similarly for the three other vertices $x_i$ of $X$ that have non-zero sedentarity.
 \demo
\end{example}

To prove the Lefschetz section theorem for hypersurfaces, we prove some statements about the vanishing of both the standard and Borel-Moore homology with coefficients in $\mathcal{Q}_p$ and with coefficients in $\mathcal{N}_p$.

\begin{proposition}\label{prop:Qp}
Let $X$ be a combinatorially ample non-singular tropical hypersurface of an $n+1$ dimensional non-singular tropical toric variety $Y$. 
Then  $H^{BM}_{q}(Y; \mathcal{Q}_p ) = 0$ for all $q < n+1$, and  therefore the map
$$H^{BM}_q(X; \F^{Y}_p|_X) \to  H^{BM}_q(Y; \F_p^Y)$$ 
is an
isomorphism when $q < n$ and a surjection when $q = n$.  

If in addition $(Y, X)$ is a cellular pair, then   $H_{q}(Y; \mathcal{Q}_p ) = 0$  for all  $q < n+1$, and  therefore the map 
$$H_q(X; \F^{Y}_p|_X) \to  H_q(Y; \F_p^Y)$$
is an
isomorphism when $q < n$ and a surjection when $q = n$. 
\end{proposition}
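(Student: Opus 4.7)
The plan is to decompose the chain complex for $\mathcal{Q}_p$ as a direct sum indexed by the top-dimensional faces $\gamma$ of $Y$ in the polyhedral structure induced by $X$. Because $\mathcal{Q}_p$ vanishes on faces of $X$ and equals $\mathcal{F}_p^Y$ on faces of $Y$ not lying in $X$, and because any face shared by two distinct components of $Y\setminus X$ necessarily lies in $X$, the boundary maps cannot mix cells across different $\gamma$. Hence
\begin{equation*}
C^{BM}_\bullet(Y;\mathcal{Q}_p) \;\cong\; \bigoplus_{\gamma} C^{BM}_\bullet\bigl(\gamma^{\circ};\,\mathcal{F}_p^Y|_{\gamma^{\circ}}\bigr),
\end{equation*}
and the analogous direct sum decomposition holds for standard chains. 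Combinatorial ampleness then provides, for each $\gamma$, an isomorphism of polyhedral complexes $\gamma^{\circ}\cong \T^{k(\gamma)}\times \R^{n+1-k(\gamma)}$. This isomorphism matches cells whose ambient toric strata have the same dimension, and therefore identifies $\mathcal{F}_p^Y|_{\gamma^{\circ}}$ with $\mathcal{F}_p^{\T^{k}\times \R^{n+1-k}}$ stalk by stalk.

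The vanishing of $H^{BM}_q(Y;\mathcal{Q}_p)$ for $q<n+1$ thus reduces to showing
\begin{equation*}
H^{BM}_q\bigl(\T^{k}\times \R^{n+1-k};\mathcal{F}_p\bigr)=0 \quad\text{for all } q<n+1 \text{ and all }p.
\end{equation*}
I would establish this via a K\"unneth formula for cellular Borel--Moore tropical homology, relying on the fact that cells of a product polyhedral complex are products of cells and that the multi-tangent stalks split as $\mathcal{F}_p(\sigma\times\tau)=\bigoplus_{a+b=p}\mathcal{F}_a(\sigma)\otimes\mathcal{F}_b(\tau)$ compatibly with the boundary operators. Direct computation of the base cases gives $H^{BM}_q(\T;\mathcal{F}_a)=\Z$ iff $a=q=1$ (zero otherwise), while $H^{BM}_q(\R;\mathcal{F}_a)=\bigwedge^a\Z$ iff $q=1$ (zero otherwise). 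Iterating the K\"unneth isomorphism over the $k+(n+1-k)$ factors concentrates $H^{BM}_\bullet(\T^{k}\times \R^{n+1-k};\mathcal{F}_p)$ in total degree $q=n+1$, giving the required vanishing.

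For the standard-homology half of the proposition, the cellular pair hypothesis ensures that, within each $\gamma$, the cells contributing to $C_\bullet(\gamma^{\circ};\mathcal{F}_p^Y|_{\gamma^{\circ}})$ have compact closure in $Y$, and a short check shows that this standard chain complex in fact coincides with the Borel--Moore complex of $\T^{k}\times \R^{n+1-k}$ already analysed, yielding the same vanishing $H_q(Y;\mathcal{Q}_p)=0$ for $q<n+1$. Both vanishing statements then feed into the long exact sequence of cosheaf homology associated to~(\ref{seq}), converting them into the claimed isomorphism for $q<n$ and surjection for $q=n$ of the inclusion-induced map. The main obstacle I foresee is the clean formulation and proof of the K\"unneth theorem for integral cellular tropical cosheaves, together with verifying that the combinatorial isomorphism $\gamma^{\circ}\cong \T^k\times \R^{n+1-k}$ faithfully transports the multi-tangent cosheaves, i.e.\ respects the stratum-dimension grading on each cell.
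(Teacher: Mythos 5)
Your overall architecture is the same as the paper's: you split $C^{BM}_{\bullet}(Y;\mathcal{Q}_p)$ as a direct sum over the $(n+1)$-dimensional faces $\gamma$ of $Y$ (refined by $X$), prove a vanishing theorem for each $\gamma^o$, and feed the result into the long exact sequence of (\ref{seq}). Where you genuinely diverge is in how the vanishing $H^{BM}_q(\gamma^o;\F_p^{\gamma^o})=0$ for $q<n+1$ is obtained. The paper simply invokes \cite[Proposition 5.5]{JRS} (recorded as Lemma \ref{lem:vanishinggammao}), whereas you propose a self-contained K\"unneth computation: identify $\gamma^o$ with $\T^k\times\R^{n+1-k}$ via combinatorial ampleness, check the base cases $H^{BM}_\bullet(\T;\F_a)$ and $H^{BM}_\bullet(\R;\F_a)$ (your computations of these are correct), and iterate. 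This is a legitimate and arguably more elementary route, very much in the spirit of the splitting argument the paper itself uses later in Lemma \ref{lem:vanishingFpgamma}; what it costs you is exactly the point you flag, namely that a combinatorial (poset) isomorphism $\gamma^o\cong\T^k\times\R^{n+1-k}$ must be upgraded to one that matches cell dimensions and sedentarities and intertwines the projection maps $\pi_{\rho\eta}$ with the standard coordinate projections --- this uses properness of $X$ and unimodularity of the fan and should be spelled out rather than deferred.

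There is, however, a genuine gap in your treatment of the standard-homology half. You assert that for each $\gamma$ the standard chain complex of $\gamma^o$ ``coincides with the Borel--Moore complex of $\T^k\times\R^{n+1-k}$ already analysed.'' This is false when $\gamma$ is non-compact: the top cell $\gamma_0$ is then non-compact and is therefore omitted from $C_\bullet(Y;\mathcal{Q}_p)$, so the resulting complex for that component is a proper truncation of the Borel--Moore complex, and truncating a complex whose homology is concentrated in top degree $n+1$ can easily create homology in lower degrees. The paper's Equation (\ref{eqn:splittingQp1}) handles this by restricting the direct sum to the \emph{compact} top-dimensional faces $\gamma$ only --- for those, every cell of $\gamma^o$ is compact and the standard complex really is the full Borel--Moore complex --- while non-compact $\gamma$ are claimed to contribute nothing. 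To repair your argument you need this dichotomy: every compact cell of $Y\setminus X$ lies in $\gamma^o$ for a compact $\gamma$, so that each summand appearing in $C_\bullet(Y;\mathcal{Q}_p)$ is an entire Borel--Moore complex and never a truncation of one.
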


\begin{proposition}\label{prop:vanshingHNp}

Let $X$  be a combinatorially ample   non-singular  $n$-dimensional tropical hypersurface   in a non-singular toric variety $Y$.
Then  $H^{BM}_{q}(X; \mathcal{N}_p ) = 0$ for all $p+ q \leq n$,  and  therefore  the map 
$$H^{BM}_q(X; \F_p^X) \to H^{BM}_q(X; \F^{Y}_p|_X)$$ is an isomorphism when $p+q <n$ and 
a surjection when $p+q =n$.

If in addition $(Y, X)$ is a cellular pair and the Newton polytope of $X$ is full dimensional, then   $H_{q}(X; \mathcal{N}_p ) = 0$ for all $p+ q \leq n$, and therefore the map 
$$H_q(X; \F^{X}_p) \to  H_q(X; \F_p^Y|_X)$$
is an
isomorphism when $p + q < n$ and a surjection when $p + q = n$.  
\end{proposition}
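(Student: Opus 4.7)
The short exact sequence $0\to\F^X_p\to\F^Y_p|_X\to\N_p\to 0$ induces a long exact sequence in (Borel-Moore) cellular homology, so both the isomorphism claim (for $p+q<n$) and the surjection claim (for $p+q=n$) reduce to proving that $H^{BM}_q(X;\N_p)=0$ (respectively $H_q(X;\N_p)=0$ under the stronger hypotheses) for every $p+q\leq n$.

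My first step is to identify $\N_p$ stalkwise via Lemma \ref{lem:KuennethFp}. For a face $\sigma$ of dimension $q$ whose relative interior lies in $Y_\rho$ of dimension $m$, pick a complement $L$ to $T_\Z(\sigma)$ inside $T_\Z(Y_\rho)$. Both $\F^Y_p|_X(\sigma)=\bigwedge^p T_\Z(Y_\rho)$ and $\F^X_p(\sigma)$ decompose compatibly as $\bigoplus_{\ell=0}^p(\,\cdot\,)\otimes\bigwedge^\ell T_\Z(\sigma)$, and in the $\ell$-th summand the inclusion $\F^{H_{m-q-1}}_{p-\ell}(v)\hookrightarrow\bigwedge^{p-\ell}T_\Z(L)$ is the identity when $p-\ell<m-q$ and is the inclusion $0\hookrightarrow\bigwedge^{m-q}T_\Z(L)\cong\Z$ precisely when $p-\ell=m-q$, by Example \ref{ex:FpHyperplane}. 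Only the top-normal summand contributes to the quotient, yielding
\[
\N_p(\sigma)\;\cong\;\bigwedge^{p-(m-q)}T_\Z(\sigma)\quad\text{if }p\geq m-q,\qquad\text{and }0\text{ otherwise.}
\]
In particular, on sedentarity-$0$ faces ($m=n+1$) one has $\N_p(\sigma)=0$ whenever $p+q\leq n$, so in the relevant range $\N_p$ is supported on the closed set $\partial X:=X\setminus(X\cap Y_0)$.

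Next, I would reduce to a computation on $\partial X$ and then induct on $n$. Set $X_0:=X\cap Y_0$. Since $C^{BM}_q(X_0;\N_p|_{X_0})=0$ in all degrees $q<n+1-p$, the long exact sequence for cellular Borel-Moore homology applied to the open-closed decomposition $X=X_0\sqcup\partial X$ yields $H^{BM}_q(X;\N_p)\cong H^{BM}_q(\partial X;\N_p|_{\partial X})$ for $p+q<n$, with a small additional analysis of the connecting map (whose target vanishes for chain-degree reasons) handling the boundary case $p+q=n$. The base case $n=0$ is trivial. For the inductive step, observe that for every cone $\rho$ of $\Sigma$ of positive dimension, $\overline{X_\rho}=X\cap\overline{Y_\rho}$ is again a non-singular combinatorially ample tropical hypersurface in the smaller non-singular tropical toric variety $\overline{Y_\rho}$, and the identical local K\"unneth description of $\F^{\overline{X_\rho}}_p$ from Lemma \ref{lem:KuennethFp} yields $\N_p|_{\overline{X_\rho}}\cong\N_p^{\overline{X_\rho}}$. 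A Mayer-Vietoris (or Leray-type) spectral sequence for the closed cover $\partial X=\bigcup_{\rho\in\Sigma^{(1)}}\overline{X_\rho}$, combined with the inductive vanishing both on each $\overline{X_\rho}$ and on their iterated intersections (which equal $\overline{X_{\rho'}}$ for larger cones $\rho'$), then delivers the required vanishing of $H^{BM}_\bullet(\partial X;\N_p)$.

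The main technical obstacle is that the inductive vanishing range for $\overline{X_\rho}$ is only $p+q\leq n-\dim\rho$, short by exactly $\dim\rho$ of the range required on $\partial X$; closing this gap is where combinatorial ampleness of $X$ is used in an essential way, as it is precisely the hypothesis excluding the pathology of Example \ref{ex:blowup} and ensures that the spectral-sequence differentials annihilate the borderline classes. The standard-homology statement under the additional cellular-pair and full-dimensional-Newton-polytope hypotheses is proved by the same scheme with $C^{BM}_\bullet$ replaced by the compact-support chain complex $C_\bullet$; the cellular-pair assumption guarantees via Remark \ref{rem:invariance} that the cellular groups compute singular homology, while full-dimensionality of the Newton polytope rules out the degenerate behavior of Example \ref{ex:NPnotfull} that would otherwise break the reduction to $\partial X$.
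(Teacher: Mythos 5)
Your stalkwise identification of $\N_p$ is correct and consistent with the paper's Lemma \ref{lemma:vanishingN_p} (indeed it refines it), and the reduction of the Proposition to the vanishing of $H^{\sqbullet}_q(X;\N_p)$ via the long exact sequence is exactly what the paper does. The reduction to faces of positive sedentarity is also sound, since the chain groups themselves vanish on sedentarity-zero faces in the range $p+q\leq n$. However, the core of your argument --- the induction on dimension via the closed cover $\partial X=\bigcup_{\rho}\overline{X_\rho}$ --- has a genuine gap that you yourself flag: for $|I|=s+1$ the induction hypothesis only kills $H^{BM}_t(\overline{X_{\rho_I}};\N_p)$ for $t\leq n-p-s-1$, so the entire antidiagonal of borderline terms $E^1_{s,\,n-p-s}$ survives to the $E^1$ page, and you offer no mechanism by which the \v{C}ech-type differential annihilates them. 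Saying that combinatorial ampleness ``ensures that the spectral-sequence differentials annihilate the borderline classes'' is not an argument; it is precisely the statement that needs proof, and it is not a formal consequence of the cover being well behaved.

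The paper closes exactly this gap by a different organization of the chain complex: it filters $C^{\sqbullet}_\bullet(X;\N_p)$ by sedentarity, so that the $E^1$ page splits as a direct sum, over sedentarity-zero faces $\gamma$ of $X$, of the complexes $C^{BM}_\bullet(\gamma^o;\N_p|_{\gamma^o})$, and then proves the local vanishing theorem $H^{BM}_q(\gamma^o;\N_p|_{\gamma^o})=0$ for all $q<\dim\gamma$ (Lemma \ref{lem:vanishingNpgamma}). That local statement is what your approach is missing; it is obtained by applying the short exact sequence (\ref{seq:N}) restricted to $\gamma^o$ together with two inputs: (i) the vanishing $H^{BM}_q(\tilde\gamma^o;\F_p^{\tilde\gamma})=0$ for $q\neq n+1$ from \cite{JRS} (Lemma \ref{lem:vanishinggammao}), which is where combinatorial ampleness actually enters --- it guarantees $\gamma^o$ and $\tilde\gamma^o$ have unique minimal faces and are combinatorially products of copies of $\T$ and $\R$; and (ii) the K\"unneth splitting $C^{BM}_\bullet(\gamma^o;\F_p^X|_{\gamma^o})\cong\bigoplus_l \F^{\Gamma'}_{p-l}(v)\otimes C^{BM}_\bullet(\gamma^o;\F_l^{\gamma^o})$ from Lemma \ref{lem:KuennethFp} (Lemma \ref{lem:vanishingFpgamma}). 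With the sedentarity filtration, the terms $E^1_{q,m}$ are nonzero only for $m\geq n-p+1$, so every $q\leq n-p$ satisfies $q<m$ and the local vanishing kills $E^2_{q,m}$ outright --- there are no surviving borderline classes to cancel. If you want to rescue your induction on dimension, you would need to prove an analogue of Lemma \ref{lem:vanishingNpgamma} anyway, at which point the sedentarity filtration is the more natural bookkeeping device. Your remarks on the standard-homology case (cellular pair for Remark \ref{rem:invariance}, full-dimensional Newton polytope to ensure compact faces have compact sedentarity-zero parents) are in the right spirit but inherit the same gap.
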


We recall the definition of $\gamma^o$ for a face $\gamma$ of $X$ of dimension $s$ and $\sed(\gamma) = 0$.  For each cone $\rho$  in the fan  $\Sigma$ defining $Y$, 
set  $\gamma_{\rho} := \gamma \cap Y_{\rho}$ and define the polyhedral complex $$\gamma^{\circ} := \bigsqcup_{\rho} \relint \gamma_{\rho}.$$
If $X$ is a hypersurface in $\R^{n+1}$, then for every face $\gamma$ of $X$ the complex $\gamma^o$ consists of a single open cell $\relint (\gamma)$.
See Figure \ref{fig:gammao} and Example \ref{ex:gammao} for illustrations of $\gamma^o$.

\begin{example}\label{ex:gammao}

Let $X$ be a tropical hypersurface in a $3$-dimensional toric variety $Y$. We describe the polyhedral complexes $\gamma^o$ for some faces $\gamma$ of $X$. 
If $\gamma$ is a face of $X$ which does not intersect any of the strata $Y_{\rho}$ for $\rho \neq 0$ then $\gamma^{\circ}$ consists of a single cell which is simply $\relint( \gamma)$. Therefore $\gamma^\circ$ is combinatorially isomorphic to $\R^q$ where $q$ is the dimension of $\gamma$. 

Suppose that $\gamma$ is a $2$-dimensional face of $X$ and $\gamma \cap Y_{\rho} \neq \emptyset$ for some $1$-dimensional stratum $Y_{\rho}$. There must be two strata $Y_{\rho'}$ and $Y_{\rho''}$ of $Y$ which contain $Y_{\rho}$, moreover $\gamma$ has non-empty intersection with both $Y_{\rho'}$ and $Y_{\rho''}$.
Therefore, $\gamma^o$ consists of four open cells and is combinatorially isomorphic to $\T^2$, see the left hand side of Figure \ref{fig:gammao}. 
If $\gamma$ is $2$-dimensional and  intersects only a single $2$-dimensional stratum $Y_{\rho}$, then $\gamma^o$ consists of two open cells and is combinatorially isomorphic to $\R \times \T$. 

Suppose $\gamma$  is a $1$-dimensional face of $X$ of sedentarity $0$ such that $\gamma \cap Y_{\rho}$ is non-empty for some stratum $Y_{\rho}$ of codimension $1$. Such a situation is depicted on the right
hand side of Figure \ref{fig:gammao}.
Then 
$\gamma^o$ consists of two open cells, the $1$-dimensional cell $\gamma_0 = \gamma \cap \R^3$ and the point $\gamma_{\rho} := \gamma \cap \R^2 \times \{-\infty\}$. 
 The face $\gamma_{\{3\}}$ is the minimal face of $\gamma_0$. 
Moreover, we have  $\gamma_0^{\circ}  \cong H_2 \subset \R^3$ and $\gamma_{\{3\}}^{\circ} \subset \R^{2} \times \{-\infty\}$ is the boundary of $X$ and is a combinatorially isomorphic to a tropical line.
\demo
\end{example}
\begin{figure}
\includegraphics[scale=0.25]{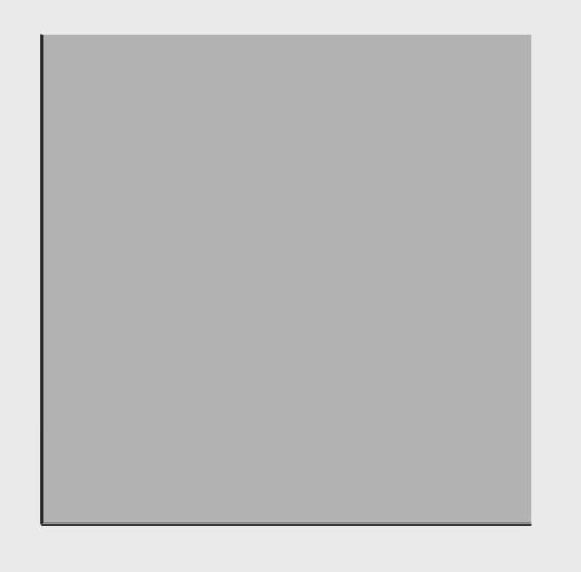}
\includegraphics[scale=0.25]{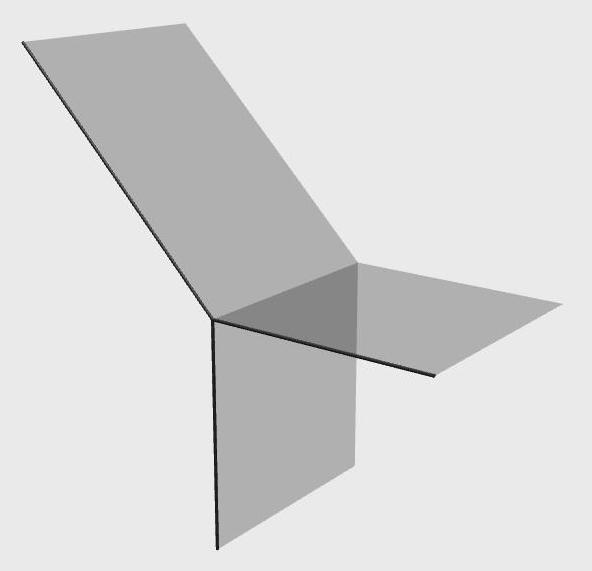}
\caption{A depiction of the polyhedral complexes  $\gamma^o$ for two faces $\gamma$ from Example \ref{ex:gammao} }\label{fig:gammao}
\end{figure}

\begin{lemma}
Let $X$ be a non-singular and combinatorically ample tropical hypersurface in a non-singular tropical toric variety $Y$. Then for every face $\gamma$ of $X$ the polyhedral complex   $\gamma^o$ has a unique minimal face.  
\end{lemma}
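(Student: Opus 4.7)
The plan is to reduce the statement to the combinatorial ampleness of a top-dimensional face of $Y$ containing $\gamma$, under the standing assumption $\sed(\gamma) = 0$ (the case of higher sedentarity then reduces to this one by passing to the toric subvariety $\overline{Y_{\eta}}$ containing $\gamma$, to which all the hypotheses transfer). First, I would choose a top-dimensional face $\tilde{\gamma}$ of $Y$, equipped with the polyhedral structure refined by $X$, such that $\gamma$ is a face of $\tilde{\gamma}$; such a $\tilde{\gamma}$ exists because $\gamma \subset X$ lies in the topological boundary of at least one connected component of $Y \setminus X$. By Definition \ref{def:nondeg}, the polyhedral complex $\tilde{\gamma}^{\circ}$ is combinatorially isomorphic to $\T^a \times \R^b$ with $a+b=n+1$; its face poset is the Boolean lattice $2^{\{1,\dots,a\}}$, which has a unique minimal element $\{-\infty\}^a \times \R^b$. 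Transporting via this isomorphism, the set $\{\rho \in \Sigma : \tilde{\gamma}_\rho \neq \emptyset\}$ admits a unique maximal cone $\rho^* \in \Sigma$ of dimension $a$, and every cone in this set is a face of $\rho^*$.

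Next, I would propagate this structure to $\gamma$. Using \cite[Lemma 3.9]{OR}, both $\{\rho : \gamma_\rho \neq \emptyset\}$ and $\{\rho : \tilde{\gamma}_\rho \neq \emptyset\}$ coincide with the sets of cones whose relative interior meets $\mathrm{rec}(\gamma)$, respectively $\mathrm{rec}(\tilde{\gamma})$; since $\gamma \subset \tilde{\gamma}$ gives $\mathrm{rec}(\gamma) \subset \mathrm{rec}(\tilde{\gamma})$, every cone $\rho$ with $\gamma_\rho \neq \emptyset$ must be a face of $\rho^*$. Inside $\rho^*$ the fan $\Sigma$ reduces to the face lattice of $\rho^*$, which is closed under intersection, so I would define $\rho_\gamma$ to be the smallest face of $\rho^*$ containing $\mathrm{rec}(\gamma) \cap \rho^*$.

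I would then verify that $\rho_\gamma$ is the unique maximum of $\{\rho : \gamma_\rho \neq \emptyset\}$, equivalently that $\gamma_{\rho_\gamma}$ is the unique minimal face of the polyhedral complex $\gamma^{\circ}$. On one side, the minimality of $\rho_\gamma$ prevents $\mathrm{rec}(\gamma) \cap \rho^*$ from being contained in any proper face of $\rho_\gamma$, which forces its linear span to coincide with that of $\rho_\gamma$; an elementary openness argument in this span then yields $\mathrm{rec}(\gamma) \cap \relint \rho_\gamma \neq \emptyset$, so $\gamma_{\rho_\gamma} \neq \emptyset$. On the other side, any $\rho \supsetneq \rho_\gamma$ in the face lattice of $\rho^*$ satisfies $\mathrm{rec}(\gamma) \cap \rho^* \subset \rho_\gamma \subsetneq \rho$, placing $\mathrm{rec}(\gamma)$ in a proper face of $\rho$ and hence giving $\gamma_\rho = \emptyset$.

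The main technical step is extracting from the combinatorial isomorphism $\tilde{\gamma}^{\circ} \cong \T^a \times \R^b$ the fact that the recession directions of $\tilde{\gamma}$ that reach the toric boundary of $Y$ all lie in a single cone $\rho^* \in \Sigma$; once this is secured, the remaining claims reduce to standard combinatorics inside the face lattice of $\rho^*$.
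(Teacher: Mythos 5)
Your proposal is correct and follows essentially the same route as the paper: reduce to $\sed(\gamma)=0$, embed $\gamma$ in a top-dimensional face $\tilde\gamma$ of $Y$ refined by $X$, use combinatorial ampleness to show the cones meeting $\tilde\gamma$ all lie in the face lattice of a single cone, and transfer to $\gamma$ via the recession-cone criterion of \cite[Lemma 3.9]{OR}. The only difference is presentational: the paper shows the poset of cones meeting $\gamma$ is directed (any two have an upper bound in the set) and concludes by finiteness, whereas you exhibit the maximal cone $\rho_\gamma$ explicitly.
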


\begin{proof}
We can suppose without loss of generality that $\gamma$ is of sedentarity $0$. 
Let $\Sigma$ be the fan defining $Y$. Suppose that $\rho$ and $\rho'$ are cones of $\Sigma$ such that $\gamma \cap Y_{\rho} \neq \emptyset$ and $\gamma \cap Y_{\rho'} \neq \emptyset$. We will show that there exists a cone $\eta $ of $\Sigma$ such that $\gamma \cap Y_{\eta} \neq \emptyset$ and $\eta$ contains $\rho$ and $\rho'$. 
Since there are a finite number of faces in $\gamma^o$, it will follow that there is a unique minimal face. 

We consider the polyhedral structure on $Y$ induced by $X$. Then there is a face  $\tilde{\gamma}$ of $Y$ of dimension $n+1$ such that $\gamma$ is a face of $\tilde{\gamma}$. Since $X$ intersects the boundary of $Y$ properly, we have $\tilde{\gamma}^o \cap Y_{\rho}, \tilde{\gamma}^o \cap Y_{\rho'} \neq \emptyset$. Moreover, since $X$ is combinatorially ample in $Y$, the face $\tilde{\gamma}^o$ is combinatorially isomorphic to $\R^{r} \times \T^{n+1-r}  $ for some $r$. Therefore, there exists a 
cone $\eta$  of $\Sigma$ such that $\rho$ and $\rho'$ are faces of $\eta$ and $\tilde{\gamma}^o \cap Y_{\eta} \neq \emptyset$. Then $\gamma \cap Y_{\eta}$ is also non-empty since the recession cone of $\gamma$ intersects the cone $\eta$, for example the faces $\rho$ and $\rho'$ are contained in the intersection. This completes the proof.
\end{proof}


Before the proof of Proposition \ref{prop:Qp} we state a useful lemma that will be used throughout this section. 
By \cite{JRS}, the  singular Borel-Moore homology groups of 
$\gamma^o$ with coefficients in $ \mathcal{F}_p^{\gamma^o}$ vanish except in degree $\dim(\gamma)$. Combining this result with Remark  \ref{rem:invariance} we obtain the next lemma which asserts the analogous statement for the cellular Borel-Moore homology groups of $\gamma^o$.

\begin{lemma}{\cite[Proposition 5.5]{JRS}} \label{lem:vanishinggammao}
Let $X$ be a non-singular and \emph{combinatorially ample} tropical hypersurface of an $n+1$ dimensional non-singular tropical toric variety $Y$. Consider the polyhedral structure on $Y$ obtained by refinement by $X$.
Let $\gamma$ be a face of $Y$ of  sedentarity $0$. Then for any $p$ and all $q \neq  \dim \gamma$  
$$H^{BM}_q(\gamma^o; \mathcal{F}_p^{\gamma^o}) = 0.$$
\end{lemma}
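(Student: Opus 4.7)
The plan is to derive this lemma by combining the singular Borel-Moore vanishing of \cite[Proposition 5.5]{JRS} with the cellular-to-singular comparison discussed in Remark \ref{rem:invariance}.

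First, I would unpack the combinatorial shape of $\gamma^o$ used by the cited proposition. Combinatorial ampleness of $X$ guarantees that every top-dimensional face $\tilde\gamma$ of $Y$ satisfies $\tilde\gamma^o \cong \R^a \times \T^b$ combinatorially, with $a+b=n+1$. For a face $\gamma$ of sedentarity zero of smaller dimension, I would pick a top-dimensional face $\tilde\gamma$ containing $\gamma$ and use the properness of the polyhedral structure on $Y$ (Definition \ref{def:proper}) together with the identity $\gamma_\rho = \pi_{0\rho}(\gamma_0)$ valid whenever $\gamma\cap Y_\rho\neq\emptyset$. These together imply that the stratification of $\gamma^o$ by intersection with the toric strata of $Y$ is combinatorially that of a product of copies of $\R$ and $\T$ of total dimension $\dim\gamma$, so that the hypothesis of \cite[Proposition 5.5]{JRS} is fulfilled.

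Applying that proposition yields the singular Borel-Moore tropical vanishing $H^{BM}_q(\gamma^o; \mathcal{F}_p^{\gamma^o}) = 0$ for $q \neq \dim\gamma$. To transfer this to the cellular setting used throughout this paper, I would invoke Remark \ref{rem:invariance}: one can always choose a compactification $\overline{\gamma^o}$ such that $(\overline{\gamma^o}, \gamma^o)$ is a cellular pair in the sense of Definition \ref{def:cellcomplex}, and then both the cellular and singular Borel-Moore tropical homology groups of $\gamma^o$ compute the relative tropical homology of this pair, giving the stated cellular vanishing.

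The only delicate point is the first step, namely verifying that combinatorial ampleness — which Definition \ref{def:nondeg} phrases only in terms of top-dimensional faces — descends to arbitrary sedentarity-zero faces in a manner compatible with the hypotheses of \cite[Proposition 5.5]{JRS}. Once this combinatorial reduction is in place, the lemma is an immediate consequence of the cited result and the cellular-singular comparison.
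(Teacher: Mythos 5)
Your proposal matches the paper's treatment exactly: the lemma is stated with the citation \cite[Proposition 5.5]{JRS}, and the paper's only justification is precisely the two steps you give, namely the singular Borel--Moore vanishing from that proposition combined with the cellular--singular comparison of Remark \ref{rem:invariance}. The combinatorial point you rightly flag as delicate --- that combinatorial ampleness, phrased only for top-dimensional faces of $Y$, controls the structure of $\gamma^o$ for lower-dimensional sedentarity-zero faces $\gamma$ --- is exactly what the unnamed lemma preceding this one addresses, by showing that $\gamma^o$ always has a unique minimal face.
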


\begin{proof}[Proof of Proposition \ref{prop:Qp}]
We consider the  polyhedral structure on $Y$ given by refinement by $X$. 
 For any face $\sigma$ of  $Y$ which is also a face of $X$ we have 
$ \mathcal{Q}_p(\sigma) = 0$. Therefore we have the following isomorphism of cellular chain complexes, 
\begin{equation}\label{eqn:chaingroupsQpBM}
\quad C^{BM}_{\bullet}(Y; \mathcal{Q}_p ) = \bigoplus_{\sigma \in Y \backslash X}   \F_p^Y(\sigma). 
\end{equation}
When $(Y, X)$ is a cellular pair, the cellular chain groups compute the standard homology by Remark \ref{rem:invariance} and we also have the isomorphism
\begin{equation}\label{eqn:chaingroupsQpUsual}
C_{\bullet}(Y; \mathcal{Q}_p )   = \bigoplus_{\substack{\sigma \in Y \backslash X \\ \sigma \text{ compact}}}    \F_p^Y(\sigma). 
\end{equation}

The complement $Y \backslash X$ consists of connected components each of dimension $n+1$. Each such connected component is equal to $\gamma^o$ where $\gamma$ is a $n+1$ dimensional face of $Y$ with polyhedral structure induced by $X$. 
For $\gamma$ a  face of $Y$ of dimension $n+1$,   there is the equality of cosheaves $\F_p^{\gamma^o} \cong \F_p^{Y}|_{\gamma^o}$. 
Each face $\sigma$ in $Y \backslash X$ is contained in $\gamma^o$ for a unique $n+1$-dimensional  face $\gamma  $ of $Y$.
Moreover, the boundary of the face $\sigma$ contained in $\gamma^o$ is also contained in $\gamma^o$. 
Therefore, the cellular chain complexes for $\Q_p$ split and we have the following isomorphisms, 

\begin{equation}\label{eqn:splittingQp2}
C^{BM}_{\bullet}(Y; \mathcal{Q}_p )   = \bigoplus_{\dim \gamma = n+1 }  C_{\bullet}^{BM}(\gamma^o; \mathcal{F}^{\gamma^o}_p )
\end{equation}
and 
\begin{equation}\label{eqn:splittingQp1}
C_{\bullet}(Y; \mathcal{Q}_p )   = \bigoplus_{\substack{\dim  \gamma  = n+1 \\ \gamma \text{ compact}}}  C_{\bullet}^{BM}(\gamma^o; \mathcal{F}^{\gamma^o}_p ).
\end{equation}
%
%
This produces the  isomorphisms 
$$H^{BM}_{q}(Y; \mathcal{Q}_p ) = \bigoplus_{\dim \gamma = n+1 } H_{q}^{BM}(\gamma^o; \mathcal{F}^{\gamma^o}_p )$$
and
$$H_{q}(Y; \mathcal{Q}_p ) = \bigoplus_{\substack{\dim  \gamma  = n+1 \\ \gamma \text{ compact}}}  H_{q}^{BM}(\gamma^o; \mathcal{F}^{\gamma^o}_p ).$$

It follows from  Lemma \ref{lem:vanishinggammao} that $H_{q}^{BM}(\gamma^o; \mathcal{F}^{\gamma^o}_p ) = 0$ if $q \neq n+1$, 
and we obtain that   $H_{q}(Y; \mathcal{Q}_p )   =  H^{BM}_{q}(Y; \mathcal{Q}_p ) = 0$ for all $q < n+1$. 

A direct comparison of the respective  chain complexes gives  isomorphisms  $$H_q(Y; \F^{Y}_p|_X) \cong H_q(X ; \F_p^Y|_X) 
\text{ and }  H^{BM}_q(Y; \F^{Y}_p|_X) \cong H^{BM}_q(X ; \F_p^Y|_X).$$    
Lastly, combining  this with the long exact sequence in homology associated to  the short exact sequence (\ref{seq}) and the vanishing of $H^{BM}_{q}(Y; \mathcal{Q}_p )$ and $H_{q}(Y; \mathcal{Q}_p )$ for all $q < n+1$ proves the statement of the proposition.
\end{proof}


To prove the statement about the vanishing of the homology of the cosheaf $\N_p$   from Proposition \ref{prop:vanshingHNp} we first establish a sequence of lemmas.

\begin{lemma}
\label{lemma:vanishingN_p}

Let $X$  be a $n$-dimensional  non-singular tropical hypersurface   in a toric variety $Y$.
For  $\sigma$  a face of $X$ of dimension $q$ and sedentarity $\sed(\sigma)$, we have  $\mathcal{N}_p(\sigma) = 0$ if $p \leq n - q - \sed(\sigma)  $. 
\end{lemma}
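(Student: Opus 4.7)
The plan is to compute the stalk $\mathcal{N}_p(\sigma) = \F^Y_p(\sigma) / \F^X_p(\sigma)$ explicitly by combining the \emph{K\"unneth}-type decomposition of $\F^X_p(\sigma)$ supplied by Lemma \ref{lem:KuennethFp} with a parallel decomposition of the ambient stalk $\F^Y_p(\sigma)$, and then reading off when the quotient vanishes.

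Fix a face $\sigma$ of $X$ of dimension $q$ whose relative interior lies in the stratum $Y_\rho$, where $\dim Y_\rho = m = n+1-\sed(\sigma)$. Choose, as in the proof of Lemma \ref{lem:KuennethFp}, an $(m-q)$-dimensional rational affine subspace $L \subset Y_\rho$ meeting the faces of $X_\rho$ containing $\relint(\sigma)$ transversally and such that $T_\Z(Y_\rho) = T_\Z(\sigma) \oplus T_\Z(L)$. Taking wedge powers of this splitting yields the natural decomposition
\begin{equation*}
\F^Y_p(\sigma) \;=\; \bigwedge^p T_\Z(Y_\rho) \;\cong\; \bigoplus_{l=0}^p \bigwedge^{p-l} T_\Z(L) \otimes \bigwedge^l T_\Z(\sigma),
\end{equation*}
while Lemma \ref{lem:KuennethFp} gives
\begin{equation*}
\F^X_p(\sigma) \;\cong\; \bigoplus_{l=0}^p \F^{H_{m-q-1}}_{p-l}(v) \otimes \bigwedge^l T_\Z(\sigma),
\end{equation*}
where $H_{m-q-1} \subset \R^{m-q}$ is the standard tropical hyperplane with vertex $v$.

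The next step is to identify the inclusion $\F^X_p(\sigma) \hookrightarrow \F^Y_p(\sigma)$ with the direct sum of the natural inclusions $\F^{H_{m-q-1}}_{p-l}(v) \hookrightarrow \bigwedge^{p-l} T_\Z(L)$, tensored with $\bigwedge^l T_\Z(\sigma)$. This is immediate from the construction in Lemma \ref{lem:KuennethFp}: the isomorphism there is induced by wedging classes in $\F^{L\cap X}_{p-l}(v) = \F^{H_{m-q-1}}_{p-l}(v)$ with elements of $\bigwedge^l T_\Z(\sigma)$, and both of these factors sit inside the corresponding factors of the ambient decomposition. Passing to the quotient therefore gives
\begin{equation*}
\mathcal{N}_p(\sigma) \;\cong\; \bigoplus_{l=0}^p \Bigl( \bigwedge^{p-l} T_\Z(L) \big/ \F^{H_{m-q-1}}_{p-l}(v) \Bigr) \otimes \bigwedge^l T_\Z(\sigma).
\end{equation*}

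From Example \ref{ex:FpHyperplane} the hyperplane stalk $\F^{H_{m-q-1}}_{p-l}(v)$ equals the full wedge power $\bigwedge^{p-l} T_\Z(L)$ as long as $p-l \leq m-q-1 = n-q-\sed(\sigma)$, and the two disagree only when $p - l = m - q$. Thus if $p \leq n - q - \sed(\sigma)$, then for every $0 \leq l \leq p$ we have $p - l \leq p \leq m - q - 1$, so every summand in the display above is zero and $\mathcal{N}_p(\sigma) = 0$, as claimed. The only genuine work is the identification of the inclusion $\F^X_p(\sigma) \hookrightarrow \F^Y_p(\sigma)$ with the direct sum of hyperplane inclusions; this is the main (but mild) obstacle and is handled by revisiting the explicit generators in the proof of Lemma \ref{lem:KuennethFp}.
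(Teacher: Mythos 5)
Your proof is correct, but it takes a more explicit route than the paper's. The paper's own argument is a pure rank count: it records that $\rank \F^Y_p(\sigma) = \binom{n+1-\sed(\sigma)}{p}$, reads off from the polynomial $\chi_\sigma(\lambda)$ of Corollary \ref{cor:EPpoly} (itself a consequence of Lemma \ref{lem:KuennethFp}) that $\rank \F^X_p(\sigma)$ equals the same binomial coefficient precisely in the range $p \leq n-q-\sed(\sigma)$, and then concludes $\N_p(\sigma)=0$ from the equality of ranks together with the freeness of $\N_p(\sigma)$, which the paper asserts earlier as a consequence of non-singularity of $X$. You instead split the ambient stalk $\bigwedge^p T_\Z(Y_\rho)$ compatibly with the K\"unneth decomposition of $\F^X_p(\sigma)$, identify the inclusion as a direct sum of the hyperplane inclusions $\F^{H_{m-q-1}}_{p-l}(v)\hookrightarrow \bigwedge^{p-l}T_\Z(L)$, and compute the cokernel term by term. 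Both arguments rest on the same key input, Lemma \ref{lem:KuennethFp}, and your identification of the inclusion with the sum of wedge maps is exactly how the isomorphism in that lemma is constructed, so the ``mild obstacle'' you flag is genuinely mild. What your version buys is that it does not need the freeness of $\N_p(\sigma)$ as an external hypothesis --- it exhibits $\N_p(\sigma)$ explicitly as a direct sum of free pieces (each summand is either $0$ or, in the extreme case $p-l=m-q$, a copy of $\bigwedge^l T_\Z(\sigma)$), thereby reproving that freeness; the paper's version is shorter but leans on that unproved assertion to pass from equal ranks to a vanishing quotient.
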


\begin{proof}
The $\Z$-modules $\mathcal{N}_p(\sigma)$, $ \mathcal{F}_p^Y|_X(\sigma)$, and $\F_p^X$ are all free,  so  it suffices to show that the ranks of $ \mathcal{F}_p^{Y}|_X(\sigma)$ and $\F_p^X$ are equal when $p \leq n-q-\sed(\sigma)$. 
By Example \ref{ex:FYFX}, 
$$ \rank \mathcal{F}_p^{Y}|_X(\sigma) = {n+1-\sed(\sigma) \choose p}.$$
By Lemma \ref{lem:KuennethFp}, the  polynomial defined by 
$$
\chi_\sigma(\lambda):=\sum_{p=0}^n (-1)^p \rank \mathcal{F}^X_p(\sigma)\lambda^p.
$$ 
is  
$$
\chi_\sigma(\lambda)=(1-\lambda)^{n+1-\sed(\sigma)}-(1-\lambda)^q(-\lambda)^{n-q+1-\sed(\sigma)}.
$$
So that  $\rank \mathcal{F}_p^X(\sigma) = {n+1-\sed(\sigma) \choose p}$ if 
$p \leq n-q - \sed(\sigma)$. 
Therefore $\mathcal{N}_p(\sigma) = 0$ when $p \leq n-q - \sed(\sigma)$, and the proof is completed.  
\end{proof}



\begin{lemma}\label{lem:vanishingFpgamma}
Let $X$  be a combinatorially ample $n$-dimensional  non-singular tropical hypersurface   in a toric variety $Y$.
For a face $\gamma$ of $X$ of sedentarity $0$ we have
$$H^{BM}_{q}(\gamma^{\circ}; \mathcal{F}^X_p|_{\gamma^{\circ}}) = 0$$
for all $q < \dim \gamma$. 
\end{lemma}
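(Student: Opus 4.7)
Set $s=\dim\gamma$. Recall that the cells of $\gamma^{\circ}$ are exactly $\relint(\gamma_\rho)$ for those cones $\rho\in\Sigma$ with $\gamma\cap Y_\rho\neq\emptyset$; since $X$ is proper in $Y$, each $\gamma_\rho$ has dimension $s-\dim\rho$, its relative interior lies in the stratum $Y_\rho$ of dimension $m:=n+1-\dim\rho$, and the face relations on $\gamma^{\circ}$ are dual to the inclusions of cones. Note also that, because $\gamma$ is a face of $X$, it is a face of $Y$ in the polyhedral refinement induced by $X$, so Lemma \ref{lem:vanishinggammao} applies to $\gamma$.

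The plan is to show that $\F_p^X|_{\gamma^{\circ}}$ splits, as a cosheaf on $\gamma^{\circ}$, into a sum of constant coefficients tensored with the cosheaves $\F_l^{\gamma^{\circ}}$. Indeed, for the cell $\gamma_\rho\in\gamma^{\circ}$, Lemma \ref{lem:KuennethFp} gives the natural isomorphism
\begin{equation*}
\F_p^X(\gamma_\rho)\;\cong\;\bigoplus_{l=0}^p \F_{p-l}^{H_{n-s}}(v)\,\otimes\,\bigwedge^l T_\Z(\gamma_\rho),
\end{equation*}
since $m-(s-\dim\rho)-1=n-s$ is independent of $\rho$. For $\gamma_{\rho'}$ a face of $\gamma_\rho$ (so $\rho\subsetneq\rho'$ and the two cells lie in distinct strata), the second part of Lemma \ref{lem:KuennethFp} says that the cosheaf map $i_{\gamma_\rho,\gamma_{\rho'}}$ is compatible with these isomorphisms and the projections $\id\otimes\pi_{\rho\rho'}$ on each summand. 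Because $\F_l^{\gamma^{\circ}}(\gamma_\rho)=\bigwedge^l T_\Z(\gamma_\rho)$ with cosheaf maps given by exactly those $\pi_{\rho\rho'}$, this assembles into a cosheaf isomorphism
\begin{equation*}
\F_p^X|_{\gamma^{\circ}}\;\cong\;\bigoplus_{l=0}^p \F_{p-l}^{H_{n-s}}(v)\otimes \F_l^{\gamma^{\circ}}
\end{equation*}
on $\gamma^{\circ}$.

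Since each $\F_{p-l}^{H_{n-s}}(v)$ is a free $\Z$-module (a constant factor), it commutes with the formation of Borel--Moore cellular chain complexes and with taking homology, yielding
\begin{equation*}
H_q^{BM}(\gamma^{\circ};\F_p^X|_{\gamma^{\circ}})\;\cong\;\bigoplus_{l=0}^p \F_{p-l}^{H_{n-s}}(v)\otimes H_q^{BM}(\gamma^{\circ};\F_l^{\gamma^{\circ}}).
\end{equation*}
Finally, Lemma \ref{lem:vanishinggammao} (applied to $\gamma$ as a sedentarity-$0$ face of $Y$ refined by $X$) gives $H_q^{BM}(\gamma^{\circ};\F_l^{\gamma^{\circ}})=0$ for all $q\neq s$ and all $l$, so the left-hand side vanishes whenever $q<s=\dim\gamma$, as claimed. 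The one technical point to verify carefully is the naturality of the splitting above with respect to cosheaf maps, which is precisely what the second half of Lemma \ref{lem:KuennethFp} provides; all remaining steps are formal consequences.
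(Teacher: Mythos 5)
Your proof is correct and follows essentially the same route as the paper's: both decompose $\F_p^X|_{\gamma^{\circ}}$ via the K\"unneth-type formula of Lemma \ref{lem:KuennethFp} into summands $\F_{p-l}^{H_{n-\dim\gamma}}(v)\otimes\F_l^{\gamma^{\circ}}$ and then invoke Lemma \ref{lem:vanishinggammao} together with the freeness (the paper cites right-exactness of the tensor product) of the constant factor. The only cosmetic difference is that the paper organizes the splitting at the level of explicit chain complexes $C^{p,l}_{\bullet}$ rather than as a cosheaf isomorphism, which changes nothing of substance.
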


\begin{proof}
Denote by $\gamma_{m}$ the unique minimal face of $\gamma^{\circ}$ and suppose it is contained in the stratum $Y_{\rho_m}$. 
Let  $\Gamma$ denote the star of $\gamma_{m}$ in $X_{\rho_m}$, that is, $$\Gamma = \text{star}_{X_{\rho_m}}(\gamma_{m})  = \{ \sigma  \ | \   \gamma_m \subset \sigma \subset X_{\rho_m}  \}\subset \R^{n+1-\sed(\gamma_{m})}.$$ 
Then as a polyhedral complex  $\Gamma \subset \R^{n+1-\sed(\gamma_{m})}$ is, up to $\GL_{n+1- \sed(\gamma_{m})}(\Z)$,  equal to  a basic open set of $\Gamma' \times \R^{\dim \gamma_m} $ where $\Gamma' = H_{n-\dim \gamma_m  }$ is the standard tropical hyperplane in  $\R^{n+1 - \dim \gamma_{m}}$. Again, for the notion of basic open set see \cite[Definition 3.7]{JSS}.

Moreover, the star of any other face  $\gamma_{\rho}$ in $\gamma^o$ is,
up to $\GL_{n+1 - \sed(\gamma_{m})}(\Z)$, equal to a basic open set of  $\Gamma' \times \R^{\dim \gamma_{\rho}}$. 
Let $v$ be the vertex of $\Gamma'$, then by Lemma \ref{lem:KuennethFp}, for any face $\gamma_{\rho}$ of $\gamma^o$ we have 
$$ \F^X_p(\gamma_{\rho}) \cong  \bigoplus_{l = 0}^p \F_{p-l}^{\Gamma' } (v) \otimes \bigwedge^l \Z^{\dim \gamma_{\rho}   } $$
This isomorphism follows from the tensor product formula for the $\Z$-module $\F^X_p(\gamma_{\rho})$  in  Lemma \ref{lem:KuennethFp}. 

For each $l$ from $0$ to $p$, let 
 $C^{l, p}_{\bullet}$ denote the chain complex whose  terms are $$C^{l,p}_{q} = \bigoplus_{\substack{\rho \ | \ \gamma_{\rho} \neq \emptyset   \\ \sed(\gamma_{\rho}) = \dim \gamma-q}}     \F_{p-l}^{\Gamma' } (v) \otimes \bigwedge^l \Z^{\dim \gamma_{\rho}   }. $$
We define the boundary maps of the complex on the direct summands. If $\gamma_{\rho'}$ is a face of $\gamma_{\rho}$ then the map on the direct summand is  
$$\text{id} \otimes \pi_{\rho \rho'} \colon  \F_{p-l}^{\Gamma' } (v) \otimes \bigwedge^l \Z^{\dim \gamma_{\rho}   }  \to \F_{p-l}^{\Gamma' } (v) \otimes \bigwedge^l \Z^{\dim \gamma_{\rho'}},$$ 
where $\pi_{\rho \rho'}   \colon \bigwedge^l \Z^{\dim \gamma_{\rho}} \to \bigwedge^l \Z^{\dim \gamma_{\rho'}}$ is induced by the projection map $$ \pi_{\rho \rho'} \colon T_{\Z}(Y_{\rho}) \to T_{\Z}(Y_{\rho'})$$ from (\ref{eq:projmaps}). If  $\gamma_{\rho'}$ is not a face of $\gamma_{\rho}$, then  the map is $0$.
   
   Following the description of the cosheaf maps from Lemma \ref{lem:KuennethFp}, there are isomorphisms of chain complexes  $$C_{\bullet}^{BM}(\gamma^o, \F_p^{X}|_{\gamma^o}) \cong \bigoplus_{l = 0}^p C_{\bullet}^{p,l}.$$
By distributivity of tensor products we also have the isomorphisms 
$$C_{\bullet}^{p,l} \cong \F_{p-l}^{\Gamma'}(v) \otimes C_{\bullet}^{BM}(\gamma^o, \F_l^{\gamma^o}).$$
Moreover, the homology of the chain complex $C_{\bullet}^{BM}(\gamma^o, \F_l^{\gamma^o})$  vanishes except in degree $q = \dim \gamma$ by 
Lemma \ref{lem:vanishinggammao}, so we also have   $H_{q}^{BM}(\gamma^o, \F_l^{\gamma^o}) = 0$ for all $q < \dim \gamma$. 
Because the tensor product is right exact,  we have  $H_q(C_{\bullet}^{p,l}) = 0$ for $q <\dim \gamma$ and all $l$ and $p$. It now follows that $H^{BM}_{q}(\gamma^{\circ} , \mathcal{F}_p^{X}|_{\gamma^{\circ}}) = 0$ for $q < \dim \gamma$. 
\end{proof}

\begin{lemma}\label{lem:vanishingNpgamma}
Let $X$  be a combinatorially ample $n$-dimensional  non-singular tropical hypersurface   in a toric variety $Y$.
For a face $\gamma$ of $X$ of sedentarity $0$ we have
$$H^{BM}_{q}(\gamma^{\circ}; \mathcal{N}_p|_{\gamma^{\circ}}) = 0$$
for all $q < \dim \gamma$. 
\end{lemma}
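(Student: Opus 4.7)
The plan is to use the short exact sequence $0 \to \F^X_p \to \F^Y_p|_X \to \N_p \to 0$ of cosheaves on $X$, restrict it to $\gamma^o$, and pass to the long exact sequence in Borel--Moore homology. The restriction remains short exact because each stalk is a free $\Z$-module. Together with the vanishing $H^{BM}_q(\gamma^o;\,\F^X_p|_{\gamma^o})=0$ for $q<\dim\gamma$ provided by Lemma \ref{lem:vanishingFpgamma}, the resulting long exact sequence gives isomorphisms $H^{BM}_q(\gamma^o;\,\F^Y_p|_{\gamma^o})\cong H^{BM}_q(\gamma^o;\,\N_p|_{\gamma^o})$ in that range, so the lemma will follow from showing $H^{BM}_q(\gamma^o;\,\F^Y_p|_{\gamma^o})=0$ for all $q<\dim\gamma$.

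To establish this vanishing, I would adapt the K\"unneth-style argument from the proof of Lemma \ref{lem:vanishingFpgamma}, but applied to the simpler cosheaf $\F^Y_p$. Since $\gamma$ is a face of $X$ and $X$ is proper in $Y$, for each cone $\rho$ with $\gamma_\rho\neq\emptyset$ the transversality identity $\dim\gamma_\rho=\dim\gamma-\dim\rho$ forces $T_\Z(\rho)\subset T_\Z(\gamma)$. Choosing any $\Z$-linear splitting $T_\Z(Y_0)=T_\Z(\gamma)\oplus W$, this splitting descends through the projections $\pi_{0,\rho}$ to a compatible splitting $T_\Z(Y_\rho)=T_\Z(\gamma_\rho)\oplus W$ with the \emph{same} $W$ for every such $\rho$. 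Taking $p$-th exterior powers and summing over internal degree yields a direct sum decomposition of cosheaves on $\gamma^o$
\[ \F^Y_p|_{\gamma^o}\;\cong\;\bigoplus_{l=0}^{p}\bigwedge\nolimits^{p-l} W\,\otimes\,\F^{\gamma^o}_l, \]
which is compatible with the cosheaf maps because $\pi_{\rho,\rho'}$ restricts to the identity on $W$ and to the natural quotient on $T_\Z(\gamma_\rho)\to T_\Z(\gamma_{\rho'})$.

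Passing to Borel--Moore chain complexes, this splits $C^{BM}_\bullet(\gamma^o;\,\F^Y_p|_{\gamma^o})$ as $\bigoplus_l \bigwedge^{p-l} W\otimes C^{BM}_\bullet(\gamma^o;\,\F^{\gamma^o}_l)$; since each $\bigwedge^{p-l}W$ is a free $\Z$-module, the tensor products are exact in homology, so
\[ H^{BM}_q(\gamma^o;\,\F^Y_p|_{\gamma^o})\;\cong\;\bigoplus_{l=0}^{p}\bigwedge\nolimits^{p-l}W\,\otimes\,H^{BM}_q(\gamma^o;\,\F^{\gamma^o}_l). \]
By Lemma \ref{lem:vanishinggammao} each summand $H^{BM}_q(\gamma^o;\,\F^{\gamma^o}_l)$ vanishes for $q\neq\dim\gamma$, so $H^{BM}_q(\gamma^o;\,\F^Y_p|_{\gamma^o})=0$ for all $q<\dim\gamma$, and combining with the isomorphism from the first paragraph completes the proof.

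The main technical obstacle will be verifying the compatibility of the chosen complement $W$ across strata and checking that the induced decomposition is genuinely a decomposition of cosheaves (not just of stalks). Properness of $X$ in $Y$ is crucial here to ensure $T_\Z(\rho)\subset T_\Z(\gamma)$, as is the uniqueness of the minimal face $\gamma_m$ established earlier, which keeps all relevant cones inside a single unimodular cone $\rho_m$. A minor point is that Lemma \ref{lem:vanishinggammao} is formally stated for faces of $Y$, but its application here to a face of $X$ is legitimate (and is already used in the proof of Lemma \ref{lem:vanishingFpgamma}), since the argument depends only on the intrinsic combinatorial structure of $\gamma^o$ and the local hyperplane-times-Euclidean model near the minimal face supplied by combinatorial ampleness.
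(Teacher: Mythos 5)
Your proposal is correct, and its overall skeleton coincides with the paper's: restrict the short exact sequence $0 \to \F^X_p \to \F^Y_p|_X \to \N_p \to 0$ to $\gamma^o$, invoke Lemma \ref{lem:vanishingFpgamma} for the subcosheaf term, and reduce everything to proving $H^{BM}_q(\gamma^o;\F^Y_p|_{\gamma^o})=0$ for $q<\dim\gamma$ plus the long exact sequence. Where you genuinely diverge is in that last vanishing. The paper thickens $\gamma$ to an $(n+1)$-dimensional polyhedron $\tilde{\gamma}$ and observes that $C^{BM}_{\bullet}(\gamma^o;\F^Y_p|_{\gamma^o})$ is, up to the degree shift $n+1-\dim\gamma$, the complex $C^{BM}_{\bullet}(\tilde{\gamma}^o;\F_p^{\tilde{\gamma}})$, to which Lemma \ref{lem:vanishinggammao} applies directly; the constancy of $\F^Y_p$ along $\gamma^o$ within each stratum is what makes this re-indexing work. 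You instead split off a fixed integral complement $W$ of $T_{\Z}(\gamma)$ and decompose $\F^Y_p|_{\gamma^o}\cong\bigoplus_l\bigwedge^{p-l}W\otimes\F^{\gamma^o}_l$ as cosheaves, then apply Lemma \ref{lem:vanishinggammao} to $\gamma^o$ itself; this is sound, since properness forces the span of the rays of each relevant cone $\rho$ to lie in $T_{\Z}(\gamma)$ (so $\pi_{0\rho}$ is injective on $W$ and carries the splitting to $T_{\Z}(Y_\rho)=T_{\Z}(\gamma_\rho)\oplus\pi_{0\rho}(W)$ compatibly with the face maps), and freeness of $\bigwedge^{p-l}W$ lets the tensor decomposition pass to homology. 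Both devices bottom out in the same input, Lemma \ref{lem:vanishinggammao}; yours mirrors the K\"unneth computation already used for $\F^X_p|_{\gamma^o}$ in Lemma \ref{lem:vanishingFpgamma} and so makes the two halves of the argument uniform, while the paper's shift trick avoids any choice of complement and any saturation bookkeeping. Your closing remark about applying Lemma \ref{lem:vanishinggammao} to a face of $X$ is not actually an issue: faces of $X$ are faces of $Y$ in the refined polyhedral structure, which is exactly the setting of that lemma, and the paper uses it the same way.
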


\begin{proof}
To prove the vanishing of the homology of the cosheaf $\mathcal{N}_p|_{\gamma^{\circ}}$ we return to the short exact sequence from (\ref{seq:N}) but restricted to $\gamma^o$,  namely
$$0 \to \mathcal{F}_p^X|_{\gamma^{\circ}}  \to  \mathcal{F}_p^{Y}|_{\gamma^{\circ}} \to  \mathcal{N}_p|_{\gamma^{\circ}} \to 0.$$

Consider the polyhedral complex $\gamma^o$ alone in $Y$. Now let 
 $\tilde{\gamma}$ be a polyhedron in $Y$ of dimension $n+1$ obtained by taking an arbitrarily small polyhedral  neighborhood of $\gamma$ in $Y$.  Notice that for  every face $\sigma$ of $\gamma^o$ of dimension $q$ there is a face $\tilde{\sigma}$ of $\tilde{\gamma}^o$ of dimension $n+1-\dim \gamma +q.$ Moreover, for every face $\sigma$ of $\gamma^o$ and its corresponding face $\tilde{\sigma} $ of $\tilde{\gamma}$ we have 
$\F_p^Y(\sigma) \cong \F_p^{\tilde{\gamma}}(\tilde{\sigma})$. Therefore,  the cellular chain complex $C^{BM}_{\bullet}(\gamma^{\circ} ; \mathcal{F}_p^{Y}|_{\gamma^{\circ}})$ is isomorphic to the chain complex 
$C^{BM}_{\bullet-n-1+\dim \gamma}(\tilde{\gamma}^{\circ};  \mathcal{F}_p^{\tilde{\gamma}})$.
By Lemma \ref{lem:vanishinggammao}, we have  $$H^{BM}_{q}(\tilde{\gamma}^{\circ};  \mathcal{F}_p^{\tilde{\gamma}})= 0$$ for $q \neq  n+1$. This implies that  $H^{BM}_{q}(\gamma^{\circ} ; \mathcal{F}_p^{Y}|_{\gamma^{\circ}}) = 0$ for $q < \dim \gamma$. 

By Lemma \ref{lem:vanishingFpgamma} we have 
$H^{BM}_{q}(\gamma^{\circ} ; \mathcal{F}_p^{X}|_{\gamma^{\circ}}) = 0$ for $q < \dim \gamma$. By considering the long exact sequence in homology from the sequence (\ref{seq:N}) restricted to $\gamma^o$ proves that $H_q(\gamma^o; \mathcal{N}_p|_{\gamma^o}) = 0$ for all $q < \dim \gamma$. 
\end{proof}

\begin{proof}[Proof of Proposition \ref{prop:vanshingHNp}] 

Using Lemma \ref{lemma:vanishingN_p}, we have the following description of the Borel-Moore cellular chain groups with coefficients in $\N_p$, 
%
\begin{equation}\label{eqn:NpchainsBM}
C^{BM}_{q}(X; \mathcal{N}_p) : = \bigoplus_{m = \max \{ q, n-p +1\}}^n  \bigoplus_{\substack{\dim \sigma = q\\  \sed(\sigma)  = m- q}} \mathcal{N}_p(\sigma).
\end{equation}

If in addition $(Y, X)$ is a cellular pair, by Remark \ref{rem:invariance} the cellular chain complexes compute the standard homology of $X$ and we also have the isomorphism 
\begin{equation}\label{eqn:NpchainsUsual}
C_{q}(X; \mathcal{N}_p) : = \bigoplus_{m = \max \{ q, n-p +1\}}^n  \bigoplus_{\substack{\dim \sigma = q\\  \sed(\sigma) = m- q \\ \sigma \text{ compact } }} \mathcal{N}_p(\sigma).
\end{equation}

Notice that in the two sums in Equations \ref{eqn:NpchainsBM} and  \ref{eqn:NpchainsUsual}  there are no faces of $X$ of dimension $q$ and which have order of sedentarity strictly greater than $n-q$. 

We now filter the cellular chain complex for $\mathcal{N}_p$ using the order of sedentarity of faces. 
Set,  
%
$$
C^{BM}_{q,m}(X; \mathcal{N}_p) : =  \bigoplus_{\substack{\dim \sigma  = q \\  \sed(\sigma) \leq m - q}} \mathcal{N}_p(\sigma) \text{  \ \ \ and \ \ \ } C_{q,m}(X; \mathcal{N}_p) : =  \bigoplus_{\substack{ \dim \sigma  = q \\  \sed(\sigma)  \leq m - q \\ \sigma \text{ compact}}} \mathcal{N}_p(\sigma)$$
Since $X$ intersects the boundary of  $Y$ properly, the boundary operator can only increase the order of sedentarity by at most $1$. 
Therefore, 
$$\partial C^{\sqbullet}_{q, m }(X; \mathcal{N}_p)  \subset C^{\sqbullet}_{q-1, m}(X ; \mathcal{N}_p),$$ 
where the $\sqbullet$ in the exponent denotes either Borel-Moore or standard homology. Then there is a  filtration of the chain complex $C^{\sqbullet}_{\bullet}(X; \mathcal{N}_p) $:
\begin{equation*}
C^{\sqbullet}_{\bullet}(X; \mathcal{N}_p)=C^{\sqbullet}_{\bullet,n}(X; \mathcal{N}_p)\supset C^{\sqbullet}_{\bullet,n-1}(X; \mathcal{N}_p) \supset \cdots \supset C^{\sqbullet}_{\bullet, n-p+1}(X; \mathcal{N}_p).	
\end{equation*}

The   spectral sequence associated to this  filtration for the Borel-Moore complex has first page consisting of the terms
\begin{equation}\label{eqn:E1}
E^1_{q, m} \cong \bigoplus_{\substack{\dim \sigma = q \\  \sed(\sigma) = m-   q}} \mathcal{N}_p(\sigma).
\end{equation}
The differentials
$
\partial_1 \colon E^1_{q,m }\rightarrow E^1_{q-1,m }
$
are induced by the usual cellular differentials. 
Notice that the terms $E^1_{q, m }$ are only non-zero when  $m$ satisfies $n-p+1 \leq m \leq n$. 
In general the differentials are $\partial_r \colon E^r_{q, m} \to E^r_{q-1, m+r-1}$. 


Let $t_{\leq n-p+1} E^1_{\bullet,m}$ be the brutal truncation of the complex $E^1_{\bullet ,m}$ in dimension $n-p+1$. 
 This is the complex 
\begin{equation}\label{eqn:truncation}
0 \to  E^1_{n-p + 1,m } \to  E^1_{n-p,m} \to \dots \to  E^1_{1,m} \to  E^1_{0,m} \to 0.
\end{equation}
Moreover, we have $H_{q}(t_{\leq n-p+1} E^1_{\bullet,s}) \cong H_q( E^1_{\bullet, m} ) $ for  $q \leq n - p$.



We claim that the complex in (\ref{eqn:truncation}) is isomorphic to the brutal truncation of  the direct sum of  complexes: 
$$t_{\leq n-p+1}  \bigoplus_{\substack{\dim \gamma = s \\ \sed (\gamma) = 0}} C^{BM}_{\bullet}(\gamma^{\circ}; \mathcal{N}_p|_{\gamma^o}).$$
To see this fact, notice that if  $\sigma$ is a face of dimension $q$ and $\sed(\sigma) = m-q$, then there is  a unique face  $\gamma$ of $X$ of dimension $q$ and sedentarity $0$ which contains $\sigma$. 
Using the isomorphism in (\ref{eqn:E1}), when $q \leq n-p+1$ and  $\max\{q, n-p+1\} \leq m  \leq n$  we have the splitting of vector spaces 
$$ E^1_{q, m} \cong \bigoplus_{\substack{\sed(\gamma) = 0 \\ \dim(\gamma) = m} }  \bigoplus_{\substack{\sigma \subset \gamma^o \\ \dim \sigma   = q} } \N_p(\sigma) =
 \bigoplus_{\substack{\sed(\gamma) = 0 \\ \dim(\gamma) = m} } C^{BM}_q(\gamma^o, \N_p|_{\gamma^o}) .$$
The differentials $\partial_1$ of the first page of the spectral sequence are compatible with the above splitting of the vector spaces $E^1_{q,m}$.


By Lemma \ref{lem:vanishingNpgamma},  for a face $\gamma$ of dimension $s$ and sedentarity $0$, we have  $H^{BM}_{q}(\gamma^{\circ}; \mathcal{N}_p|_{\gamma^{\circ}}) = 0$ for $q <s$.  
Therefore, the second page of the spectral sequence associated to the filtration under consideration satisfies 
$E^2_{q, m} = 0$ if $q \neq m$ and $q\leq n-p$. Since $E^1_{q,m}=0$ if $s\leq n-p+1$, we conclude that $E^2_{q,m}=0$ for $q\leq n-p$ and any $0\leq m \leq n$. Therefore, the spectral sequence $E^\bullet_{\bullet,\bullet}$ satisfies $E^r_{q,m}=0$ for any $r\geq 2$ and $q\leq n-p$. Since $E^\bullet_{\bullet,\bullet}$ converges, we conclude that $H^{BM}_{q}(X; \mathcal{N}_p) = 0$ for $p+q \leq n$.

To obtain the analogous statement for $H_{q}(X; \mathcal{N}_p)$, consider the spectral sequence associated to the filtration of the chain complex for the standard homology. 
The first page of this spectral sequence has terms like in Equation (\ref{eqn:E1}), except that the sum is taken over the faces $\sigma$ which are compact. In order to proceed with the same argument as for standard homology, we require that if $\sigma$ is compact then the unique face $\gamma$ of $X$ of sedentarity $0$ which contains $\sigma$ is also compact. This is guaranteed by the assumption that the Newton polytope of $X$ is full dimensional. Then the rest of the argument is the same as in the case of the Borel-Moore homology except we restrict to only compact faces of $X$.


To complete the proof of the proposition,  consider the long exact sequence in homology associated to the short exact sequence in (\ref{seq:N}). Applying the vanishing statements for $H^{BM}_{q}(X; \mathcal{N}_p) $ gives  the isomorphisms $H^{BM}_q(X; \F^X_p) \cong H^{BM}_q(Y; \F^Y_p)$  for all $p +q <n$. 
This completes the proof. 
\end{proof}

\begin{proof}[Proof of Theorem \ref{thm:lef}]
The proof of the theorem follows by combining the statements in Propositions \ref{prop:Qp} and \ref{prop:vanshingHNp}. 
\end{proof}


We now present the proof of the Lefschetz section theorem for the tropical homology groups with real coefficients  of singular tropical hypersurfaces intersecting the boundary of a non-singular tropical toric variety transversally.

\begin{proof}[Proof of Theorem \ref{thm:singularcase}]
The proof follows the same strategy as the proof of the Lefschetz theorems for the integral tropical homology groups. First we tensor the  two exact sequences of  $\Z$-module cosheaves from (\ref{seq}) and (\ref{seq:N}) with $\R$  to obtain two exact sequences of cosheaves of $\R$-vector spaces.
Proposition \ref{prop:Qp} holds over $\R$, since the tropical homology with real coefficients of basic open subsets of tropical manifolds satisfies the vanishing theorems used to prove the proposition in the integral case by \cite[Section 4]{JSS}.

We claim that a variant of  Proposition \ref{prop:vanshingHNp} holds for the cosheaf $\N_p \otimes \R$. In order to prove this we describe the dimensions of the vector spaces $\F_p(\sigma)$ when $X$ is a tropical singular hypersurface. 
Consider the polyhedral decomposition of $Y$ induced by $X$, and let $v$ be a vertex of $X$ of sedentarity $0$. Then $v$ is contained in some $n+1$ dimensional face  $\gamma$ of this polyhedral decomposition of $Y$. For  $p \leq n$ we have  
$$\bigwedge^p \R^{n+1} = \bigwedge^p T(\gamma) = \sum_{\substack{v \subset \sigma \subset \gamma \\ \dim \sigma = n }} \bigwedge^p T(\sigma),$$
where in the last sum the faces of $\sigma$ are faces of $X$.  
For any $p \leq n$ we have 
$\mathcal{F}^{X}_p(v ) \otimes \R   = \bigwedge^p \R^{n+1}$, and for $p > n$ we have  $\mathcal{F}^{X}_{p}(v ) \otimes \R   = 0 $.
Therefore, we have 
$$\chi^{\R}_v(\lambda):=\sum_{p=0}^n (-1)^p \dim (\mathcal{F}^{X}_p(v ) \otimes \R) \lambda^p = (1-\lambda)^{n+1} - (-\lambda)^{n+1}.$$
\\
We can repeat the above argument for vertices of $X$ of non-zero sedentarity and also apply the same argument for  the K\"unneth type formula from Lemma \ref{lem:KuennethFp}. Therefore, if $\tau$ is a face of $X$ of dimension $q$ whose relative interior is contained in a stratum $Y_{\rho}$ of dimension $m$, we obtain 
$$ \F^X_p(\tau) \cong  \bigoplus_{l = 0}^p \F_{p-l}^{H_{m-q-1}} (v) \otimes \bigwedge^l T_{\Z}(\tau),$$
so that,  as in Corollary \ref{cor:EPpoly}, we have
$$
\chi_{\tau}(\lambda)=(1-\lambda)^{s}-(1-\lambda)^q(-\lambda)^{m-q}.
$$
This description enables us to conclude that Lemma \ref{lemma:vanishingN_p}  holds for $\N_p \otimes \R$. 
Similarly the proofs of Lemmas \ref{lem:vanishingFpgamma} and \ref{lem:vanishingNpgamma} as well as Proposition \ref{prop:vanshingHNp}
 hold for a singular hypersurface when using $\R$ coefficients. 
Then the proof of the theorem is completed in the same way as the proof of Theorem \ref{thm:lef}. 
\end{proof}

\section{The tropical  homology of hypersurfaces is torsion free}
\label{section:torsionfree}

We start this section with the proof of Theorem \ref{thm:torsionfree}, which 
using the Lefschetz section theorem for the integral homology of a non-singular tropical hypersurface.  This proposition  establishes that the integral tropical homology groups of the hypersurface are also torsion free if the integral tropical homology groups of the toric variety are as well. 
\begin{proof}[Proof of Theorem \ref{thm:torsionfree}]
Let $X$ be a non-singular tropical hypersurface of a tropical toric variety $Y$ such that the standard tropical homology of $Y$ is torsion free.
Suppose that $p +q  \geq n$, then by the universal coefficient theorem for cohomology \cite[Theorem 3.2]{Hatcher} for every $p$ and $q$ 
we have the following short exact sequence: 
\begin{equation*}
\begin{split}
0 \to \text{Ext}(H_{n-q-1}(X; \F^X_{n-p}),  \Z)  & \to H^{n-q}(X; \F^{n-p}_X) \to  \\
&  \text{Hom}( H_{n-q}(X; \F_{n-p}^X), \Z) \to 0.
\end{split}
\end{equation*}
If $p + q \geq n$, then  $\text{Ext}(H_{n-q-1}(X; \F^X_{n-p}),  \Z)=0$ since  $$H_{n-q-1}(X; \F^X_{n-p}) \cong H_{n-q-1}(Y, \F^Y_{n-p}) $$ 
and $H_{n-q-1}(Y; \F^Y_{n-p})$  is a free $\Z$-module by hypothesis. 
Also the $\Z$-module $ \text{Hom}( H_{n-q}(X; \F_{n-p}^X), \Z)$ is free since it consists of maps to a free module. 
Therefore, for all $p +q \geq n$ we have 
$$H^{n-q}(X; \F_X^{n-p}) \cong \text{Hom}( H_{n-q}(X; \F_{n-p}^X), \Z) $$  and 
 $H^{n-q}(X; \F^{n-p}_X)$ is torsion free. 
The tropical hypersurface $X$ is a non-singular tropical manifold, so by Poincar\'e duality for tropical homology with integral coefficients from \cite{JRS} we have 
$$H^{n-q}(X; \F_X^{n-p}) \cong H^{BM}_q(X; \F^X_p)$$  for all $p, q$.  This combined with the above argument proves that  $H^{BM}_q(X; \F^X_p)$ is  torsion free for all $p$ and $q$.
By the universal coefficient theorem for cohomology with compact support, for every $p$ and $q$ we have the following
\begin{equation*}
\begin{split}
0 \to \text{Ext}(H^{BM}_{q-1}(X; \F^X_{p}),  \Z)  & \to H_c^{q}(X; \F^{p}_X) \to  \\
&  \text{Hom}( H^{BM}_{q}(X; \F_{p}^X), \Z) \to 0.
\end{split}
\end{equation*}
Since the $\Z$-modules $H^{BM}_q(X; \F^X_p)$ are torsion free for all $p$ and $q$, the $\Z$-modules $H_c^{q}(X; \F^{p}_X)$ are also torsion free for all $p$ and $q$. Applying again Poincar\'e duality, we have
$$H_c^{q}(X; \F_X^{p}) \cong H_{n-q}(X; \F^X_{n-p}),$$
and $H_q(X; \F^X_p)$ are also torsion free for all $p$ and $q$.
\end{proof}

We now establish that the integral tropical homology groups of a compact tropical  toric variety are torsion free. 
For a non-singular compact complex toric variety $Y_{\C}$, we let $h^{p, q}(Y_{\C})$ denote its $(p,q)$-th Hodge number. Recall that $h^{p, q}(Y_{\C}) = 0 $ if $p \neq q$ and  the numbers $h^{p,p}(Y_{\C})$ form the toric $h$-vector of the simple polytope $\Delta$ whose normal fan is the fan defining $Y_{\C}$ \cite[Section 5.2]{FultonToric}.

\begin{prop}\label{prop:torichomo}
The integral tropical homology groups of a non-singular compact tropical toric variety $Y$ are torsion free. Moreover, we have  $$\rank H_q(Y; \F_p^Y) =  h^{p, q}(Y_{\C})$$ 
where $Y_\C$ is the corresponding non-singular compact complex toric variety. In particular, we have $H_q(Y; \F_p^Y) = 0$ unless $p = q$. 
\end{prop}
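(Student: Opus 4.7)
The plan is to analyze the cellular chain complex $C_\bullet(Y;\F_p^Y)$ for the polyhedral structure on $Y$ given by the toric stratification from Example \ref{ex:FpY}, and compare its homology with the Danilov-Jurkiewicz description of $H^*(Y_\mathbb{C};\mathbb{Z})$. Each cone $\rho$ of dimension $n+1-q$ in the defining fan $\Sigma$ gives a compact closed orbit $\overline{Y}_\rho$ of dimension $q$ carrying $\F_p^Y(\overline{Y}_\rho) \cong \bigwedge^p \mathbb{Z}^q$, so
\begin{equation*}
C_q(Y;\F_p^Y) \;=\; \bigoplus_{\dim\rho = n+1-q} \bigwedge^p \mathbb{Z}^q.
\end{equation*}
This vanishes for $q<p$, immediately giving $H_q(Y;\F_p^Y)=0$ for $q<p$, in agreement with $h^{p,q}(Y_\mathbb{C})=0$ in this range.

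For the remaining degrees I would argue by induction on the number of rays of $\Sigma$. The base case consists of a product of copies of $\mathbb{TP}^1$, where a direct computation combined with a K\"unneth formula on cellular chain complexes matches the cohomology of $(\mathbb{C}P^1)^{n+1}$ exactly. For the inductive step, select a ray $r$ of $\Sigma$ and set $D = \overline{Y}_r$, a non-singular compact tropical toric variety of dimension $n$. Consider the short exact sequence of cosheaves on $Y$,
\begin{equation*}
0 \to \F^Y_p|_{Y\setminus D} \to \F^Y_p \to \F^Y_p|_D \to 0,
\end{equation*}
and the induced long exact sequence in tropical homology. The inductive hypothesis controls $H_*(D;\F^Y_p|_D)$, and the term $H_*(Y\setminus D;\F^Y_p|_{Y\setminus D})$ is computed by retracting $Y\setminus D$ onto the smaller tropical toric variety whose fan is obtained by removing $r$ from $\Sigma$, then applying a K\"unneth decomposition to account for the tangent direction along $r$.

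The matching of ranks with Hodge numbers $h^{p,q}(Y_\mathbb{C})$ follows once one observes that the resulting recursion on the tropical side coincides with the recursion satisfied by the toric $h$-vector under toric modifications, which by Danilov-Jurkiewicz computes the Hodge numbers of $Y_\mathbb{C}$. For torsion-freeness, once ranks are known to vanish outside the diagonal $p=q$, the connecting maps in the long exact sequence go between groups of mismatched bidegree and must therefore vanish; the long exact sequence then splits into short exact sequences of torsion-free modules, and torsion-freeness is preserved throughout the induction.

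The main obstacle I anticipate is the precise identification of $H_*(Y\setminus D;\F^Y_p|_{Y\setminus D})$ with the tropical homology of the smaller toric variety. The restricted cosheaf $\F^Y_p|_{Y\setminus D}$ retains the full ambient tangent data of $Y$, not that of the smaller ambient variety, so bridging the two requires a careful tangent space decomposition relative to $r$ in the spirit of the K\"unneth splitting of Lemma \ref{lem:KuennethFp}, together with verification that this decomposition intertwines the cellular differentials. This bookkeeping is the crux of the induction.
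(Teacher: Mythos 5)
Your first step (chain-level vanishing of $C_q(Y;\F_p^Y)$ for $q<p$) agrees with the paper, but the induction you build on top of it has a structural flaw: it is not well-founded. Removing a ray $r$ from a complete simplicial unimodular fan $\Sigma$ does not in general leave a fan of a non-singular compact toric variety (the cones containing $r$ have no coherent replacement), and not every such $Y$ can be reached from a product of copies of $\TP^1$ by adding rays one at a time -- already $\TP^2$ is a minimal smooth complete toric surface that is not a product of $\TP^1$'s, and removing any of its three rays destroys completeness. Ray removal corresponds to a toric blow-down only for very special rays, and a general smooth complete fan need not admit any such ray. So the recursion you want to set up, and its claimed match with the $h$-vector recursion, never gets off the ground. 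A second, independent problem is the torsion-freeness step: knowing that \emph{ranks} vanish off the diagonal does not force the connecting homomorphisms to vanish (the off-diagonal groups could a priori be nonzero torsion), and even when the outer terms of your long exact sequence are free, the middle term is an extension of a subgroup by a \emph{quotient}, and quotients of free modules can have torsion. The argument as stated is circular. Finally, you correctly flag that $H_*(Y\setminus D;\F_p^Y|_{Y\setminus D})$ is not the tropical homology of a smaller toric variety (the cosheaf carries the ambient tangent data), but this "crux" is left entirely open.

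The paper's proof sidesteps all of this with no induction. From the chain-level vanishing $H_q(Y;\F_p^Y)=0$ for $q<p$, the universal coefficient theorem gives $H^{q+1}(Y;\F^p_Y)\cong \Hom(H_{q+1}(Y;\F^Y_p),\Z)$ in that range, and integral Poincar\'e duality for the tropical manifold $Y$ identifies $H^{q+1}(Y;\F^p_Y)$ with $H_{n-q}(Y;\F^Y_{n+1-p})$; for $q\geq p$ one has $n-q<n+1-p$, so this group vanishes by the same chain-level observation, killing the $\mathrm{Ext}$ term and forcing both torsion-freeness and the vanishing of $H_q(Y;\F^Y_p)$ for $q\neq p$. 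The rank on the diagonal is then read off from the Euler characteristic $\sum_q(-1)^q\binom{q}{p}f_q=(-1)^ph_p$ of the chain complex, which is the toric $h$-vector and hence $h^{p,p}(Y_\C)$ by Danilov--Jurkiewicz. I recommend you replace the induction with this duality bootstrap.
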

\begin{proof}
We now switch to using  the cellular homology groups of $Y$ using the polyhedral structure on $Y$ which is dual to the polyhedral structure on the defining fan $\Sigma$. Notice that every stratum $\overline{Y}_{\sigma}$ is compact. 
Let us first show that $H_q(Y; \F_p^Y) = 0$ for all $p>q$. 
With this cellular structure on $Y$, a face $\overline{Y}_{\sigma}$ of dimension $q$ has sedentarity order $n+1-q$ where $\dim Y = n+1$. By Definition \ref{def:Fp},   we have that   $\F^Y_p(\overline{Y}_{\sigma})= \bigwedge ^p \F^Y_1(\overline{Y}_{\sigma})$  where    $\dim \F^Y_1(\overline{Y}_{\sigma}) = q$.  Therefore, we have  $\F^Y_p(\overline{Y}_{\sigma})=0$ if $p>q$. Hence the chain groups $C_{q}(Y; \F^Y_p )$ are equal to zero for any $q<p$, which implies that $H_q(Y; \F_p^Y) = 0$ for $q <p$.

Recall by Remark \ref{rem:trophomocohomo} that the tropical cohomology groups are the  cohomology of the  complex dual to the tropical cellular complexes. 
 Therefore we can apply the universal coefficient theorem for cohomology  \cite[Theorem 3.2]{Hatcher}  to get the exact sequence
\begin{equation}\label{eqn:unicoeffY}
\begin{split}
0 \to \text{Ext}(H_{q}(Y; \F^Y_{p}),  \Z)  & \to H^{q+1}(Y; \F^{p}_Y) \to  \\
&  \text{Hom}( H_{q+1}(Y; \F_{p}^Y), \Z) \to 0.
\end{split}
\end{equation}
When  $q <p $ we have $H_q(Y; \F_p^Y) = 0 $, so there is  the isomorphism 
$$H^{q+1}(Y; \F^{p}_Y) \cong   \text{Hom}( H_{q+1}(Y; \F_{p}^Y), \Z).$$
The tropical toric variety $Y$ is a tropical manifold, thus Poincar\'e duality for tropical homology with integral coefficients from \cite{JRS} states that 
$$H^{q+1}(Y; \F^{p}_Y) \cong H_{n-q}(Y; \F_{n+1-p}^Y).$$

If $q \geq p$, then $n-q < n+1-p$ and applying the isomorphism above we obtain
$$H^{q+1}(Y; \F^{p}_Y)=H_{n-q}(Y; \F_{n+1-p}^Y)=0.$$ This means that $$\text{Tor}(H_{q}(Y; \F^Y_{p}))= \text{Ext}(H_{q}(Y; \F^Y_{p}),  \Z) = 0,$$ and so $H_{q}(Y; \F^Y_{p})$ is torsion-free for all $q\geq p$ and thus for all $p,q$. 
We also see from the sequence in (\ref{eqn:unicoeffY}) that $H_{q}(Y; \F^Y_{p})=0$ for all $q \neq p$.

All of the chain groups for the cellular tropical homology of $Y$ are also free so we have 
$$\chi(C_{\bullet}(Y; \mathcal{F}^Y_p)) := \sum_{q = 0}^{n+1}  (-1)^q \rank C_{q}(Y; \mathcal{F}^Y_p) =  (-1)^p \rank  H_p(Y; \mathcal{F}^Y_p).$$

Let $f_q $ denote the number of strata of $Y$ of dimension $q$. Then $(f_0, \dots, f_{n+1})$ is the $f$-vector of a polytope $P_Y$ whose normal fan is the fan defining $Y$. 
Then for every $p$ and $q$ we have  $\rank C_{q}(Y; \mathcal{F}^Y_p)  = \binom{q}{p} f_q$. Therefore,  
$$\chi(C_{\bullet}(Y; \mathcal{F}^Y_p)) := \sum_{q = 0}^{n+1}  (-1)^q \binom{q}{p} f_q = (-1)^p h_{p},$$
where $(h_0, \dots, h_{n+1})$ is the $h$-vector of the simple polytope $P_Y$. By \cite[Section 5.2]{FultonToric}, we have $h_p = \dim H^{2p}(Y_{\C}) = h_{p, p}(Y_{\C})$ which completes the proof. 
\end{proof}

\begin{proof}[Proof of Corollary \ref{cor:compactTorsionFree}] 
By Proposition \ref{prop:torichomo}, if $Y$ is compact, all its integral tropical homology groups are torsion free. Then by Theorem \ref{thm:torsionfree}, all the integral tropical homology groups of $X$ are torsion free.
\end{proof}

\begin{proof}[Proof of  Corollary \ref{cor:torsionfreequasiproj}]
Assume that the convex cone supporting the fan of $Y$ is full dimensional in $\R^{n+1}$.  We will first show that the tropical toric variety $Y$ equipped with the polyhedral structure dual to the polyhedral structure on its defining fan is a regular CW-complex. Thus the cellular tropical chain complexes can compute the standard and Borel-Moore homology groups of $Y$.
To prove this claim, consider  $Y_\C$, the quasi-projective toric variety associated to a fan $\Sigma$. 
 Let $D$ be any ample Cartier divisor on $Y_{\C}$ and consider the associated polyhedron $P$ (see for example \cite[Chapter 3]{FultonToric}). The hypothesis on the support of $\Sigma$ implies that it is the normal fan of $P$ (\cite[Chapter 6]{Mustata}). Therefore, the polyhedron $P$ is combinatorially isomorphic to $Y$, the tropical toric variety associated to $\Sigma$. Since $P$ is a polyhedron, it is a cell-complex in the sense of \cite[Chapter 4]{CurryThesis}, and one can use the cellular description to compute the standard homology groups of $Y$. As in the proof of Proposition \ref{prop:torichomo}, both standard and Borel-Moore tropical homology groups of $Y$ vanishes if $p>q$. It follows again from Poincar\'e duality and universal coefficient theorem that both standard and Borel-Moore tropical homology groups of $Y$ are torsion free. The statement for $X$ follows again from Theorem \ref{thm:lef}. 
Now suppose that the convex cone supporting the fan $Y$ is of codimension $s$ in $\R^{n+1}$. Then the tropical toric variety $Y$ is a product $\R^s \times Y'$ where $Y'$ is a tropical toric variety of dimension $n+1-s$ satisfying the assumptions above. The tropical toric variety $Y'$ is then combinatorially isomorphic to a polyhedron $P'$.  
By the  K\"unneth formula for Borel-Moore tropical homology  \cite[Theorem B]{gross} we have 
$$H^{BM}_q(Y; \F_p^{Y})  = \bigoplus_{\substack{ i+j = p \\k +l = q}} H^{BM}_k(\R^s; \F_{i}^{\R^s}) \otimes H^{BM}_l(Y'; \F_{j}^{Y'}) .$$
Therefore, the Borel-Moore  tropical homology groups of $Y$ are all torsion free and thus so are the standard tropical homology groups. This completes the proof.
\end{proof}



\section{Betti numbers of tropical homology and Hodge numbers}
\label{section:epoly}

The $k$-compactly supported cohomology group of a complex  hypersurface $X_{\C} \subset (\C^*)^{n+1}$ carries a mixed Hodge structure, see \cite{DanilovKhovansky}. 
The numbers $e_c^{p,q}(X_{\C})$ are defined to be 
$$
e_c^{p, q}(X_{\C}):= \sum_k (-1)^k h^{p,q}(H_c^k (X_{\C})),
$$
where $h^{p,q}(H_c^k (X_{\C}))$ denote the Hodge-Deligne numbers of $X_{\C}$.
The numbers $e_c^{p,q}(X_{\C})$ are  the coefficients of the $E$-polynomial  of $X_{\C}$, 
$$E(X_{\C};u,v) := \sum_{p,q} e_c^{p, q}(X_{\C})u^pv^q.$$ 
The $\chi_y$ genus of $X_{\C}$ is defined to be  $$\chi_y(X_{\C}) = E(X_{\C};y,1) := \sum_{p} \sum_{q = 0}^n e_c^{p, q}(X_{\C})y^p.$$

Theorem \ref{thm:Epoly} 
relates  the coefficients of the $\chi_y$ genus and the Euler characteristics of the chain complexes 
$C_{\bullet}^{BM}(X; \mathcal{F}_p)$. 
For the proof of the theorem we require the notion of torically non-degenerate complex hypersurfaces.

\begin{definition}\label{def:torNonDeg}
If $Y_\C$ is a toric variety, a hypersurface $X_{\C} \subset Y_\C$ is \emph{torically non-degenerate} if the intersection of $X_{\C}$ with any torus orbit of $Y_\C$ is non-singular and $X_{\C}$ intersects each torus orbit of $Y_\C$ transversally. If $Y_\C$ is the toric variety associated to the Newton polytope of $X_\C$, then the second condition follows from the first one (see for example \cite{Kho77}).
\end{definition}

\begin{proof}[Proof of Theorem \ref{thm:Epoly}]

Firstly, the variety $X_{\C}$ is stratified by its intersection with the open torus orbits of $Y_{\C}$.  Moreover, the numbers $e_c^{p,q}(X_{\C})$ are  additive along strata by \cite[Proposition 1.6]{DanilovKhovansky}. So  we have
$$ \sum_{q = 0}^n e_c^{p, q}(X_{\C}) =  \sum_{\rho}  \sum_{q = 0}^n e_c^{p, q}(X_{\C,  \rho}) $$ 
for  $X_{\C} =  \sqcup_{\rho} X_{\C , \rho} $, where $X_{\C,  \rho} := \overline{X}_{\C} \cap Y_{\C, \rho}$ and $Y_{\C, \rho}$ is the open torus orbit corresponding to the face $\rho$ of the fan $\Sigma$ defining $Y$ and $Y_{\C}$. 

The tropical hypersurface $X$ admits a stratification analogous to $X_{\C}$.    
The Euler characteristics of the chain complexes for cellular tropical Borel-Moore homology of 
$X$  satisfy the same additivity property.
Namely, 
$$\chi(C^{BM}_{\bullet}(X; \mathcal{F}^X_p)) = \sum_{\rho} \chi(C^{BM}_{\bullet}(X_{\rho};  \mathcal{F}^{X_{\rho}}_p)).$$
Moreover, for any face $\rho$ of the fan $\Sigma$ defining $Y$ and $Y_\C$, the Newton polytope of $X_{\C,\rho}$ is equal to the Newton polytope of $X_\rho$. In fact, since $X$ and $X_{\C}$ intersect properly the boundaries of $Y$ and $Y_{\C}$, respectively, it is enough to prove it for $\rho$ a ray of $\Sigma$ and then proceed by recurrence. Up to a  toric change of coordinates, one can assume that $\rho$ is a  ray in direction $e_1=(1,0,\cdots,0)$. Then the hypersurface $X_{\C,\rho}$ is given by the polynomial $f^\C(0,x_2,\cdots,x_{n+1})$, where $f^\C$ is the polynomial defining $X_\C$. Similarly  the tropical polynomial of $X_{\rho}$ is obtained from the tropical polynomial of $X$ by removing all monomials containing $x_1$. So, the fact that $X$ and $X_{\C}$ have the same Newton polytope implies that $X_{\C,\rho}$ and $X_\rho$ do as well. Therefore, it suffices to prove the statement for $X \subset \R^{n+1}$ and $X_{\C} \subset (\C^*)^{n+1}$.

We now assume that $X$ is in $\R^{n+1}$ and $X_{\C}$ is in $(\C^*)^{n+1}$. 
In \cite[Section 5.2]{KatzStapledon}, Katz and Stapeldon give a  formula for the $\chi_y$ genus of a torically non-degenerate hypersurface in the torus. Their formula utilizes regular subdivisions of polytopes to refine  the formula in terms of Newton polytopes of Danilov and Khovanskii \cite{DanilovKhovansky}.    Note that they use the term sch\"on in exchange for torically non-degenerate. 
Let $\Delta$ be the Newton polytope for $X_{\C}$ and $\tilde{\Delta}$ a regular subdivision of the lattice polytope $\Delta$. Then the formula is 
\begin{equation}\label{eq:chiyC}
\chi_y(X_{\C}) 
= \sum_{\substack{F \subset \tilde{\Delta} \\ F \not \subset \partial \Delta}} \chi_y(X_{\C, F}),
\end{equation} 
where $X_{\C, F}$ is the hypersurface in the torus $(\C^*)^{n+1}$ defined by the polynomial obtained by restricting the polynomial defining $X_{\C}$ to the monomials corresponding to the lattice points in $F$. Notice our description of  $X_{\C, F}$ differs from the one in \cite{KatzStapledon} up to the direct product with a torus.

Suppose that $\tilde{\Delta}$ is a primitive regular subdivision of $\Delta$. Then for each face $F$ of $\tilde{\Delta}$ the variety $X_{\C, F}$ is the complement of a hyperplane arrangement and its mixed Hodge structure is pure by \cite{Shapiro}. 
So that $\chi_y(X_{\C, F}) = \sum_{p = 0}^n (-1)^p \dim H^p_c(X_{\C, F}).$
In fact, this hyperplane arrangement complement is $\CC_{n-q} \times (\C^*)^{q}$,  where $\dim F = n + 1- q $ and $\CC_{n-q}$ is the complement of $n+2-q$ generic hyperplanes in $\C P^{n-q}$.
By \cite{Zharkov13}, we have $\dim H^p(X_{\C, F}) = \rank \F_p(\sigma_F)$ where $\sigma_F$ is the face of the tropical hypersurface $X$ dual to  $F$. 
By Poincar\'e duality for $X_{\C, F}$ we obtain  $\dim H^p_c(X_{\C, F} ) =  \rank \F_{n-p}(\sigma_F)$. 
Therefore, we obtain the formula $$\chi_y(X_{\C, F}) = y^{-1}(y - 1) ^q [ (y-1)^{n+1-q} - (-1)^{n+1-q}].$$
Therefore when the subdivision is primitive $\chi_y(X_{\C, F})$ only depends on the dimension of $F$. 
Moreover, if $\tilde{\Delta}$ is the subdivision dual to the tropical hypersurface $X$ then 
formula in Equation (\ref{eq:chiyC}) can be expressed in terms of the $f$-vector of bounded faces of $X$. Namely, 
\begin{equation}\label{eqn:chiyX}
 \chi_y(X_{\C}) = \sum_{q = 0}^{n} y^{-1}(y - 1) ^q [ (y-1)^{n+1-q} - (-1)^{n+1-q}] f^b_q,
\end{equation}
where $f^b_q$ denotes the number of bounded faces of $X$ of dimension $q$. 

On the other hand we can compute the Euler characteristics of the Borel-Moore chain complexes 
\begin{equation}\label{eqn:EulerBM}
 \chi (C^{BM}_{\bullet}(X; \F_p))  = \sum_{\tau \in X} (-1)^{\dim \tau} \rank \F_p(\tau).
 \end{equation}
The star of a face $\tau$ of $X$ is a basic open subset and satisfies Poincar\'e duality from \cite{JRS}. Therefore, we have 
\begin{equation*}
\begin{split}
\rank \F_p(\tau) = \rank H_0(\text{star}(\tau); \F_{p}) = \rank H^n_c(\text{star}(\tau) ; \F^{n-p}) \\
= \sum_{\sigma \supset \tau \dim\sigma=q} (-1)^{n-q} \rank \F_{n-p}(\sigma).
\end{split}
\end{equation*}
since $\rank \F^{n-p}(\tau) = \rank \F_{n-p}(\tau)$ and also $ H^n_c(\text{star}(\tau) ; \F^{n-p}) $ is torsion free. 
Swapping the order of the sum we obtain 
 $$ \chi (C^{BM}_{\bullet}(X; \F_p)) = \sum_{\sigma \in X} (-1)^{n-\dim \sigma}  \rank \F_{n-p}(\sigma) \sum_{\tau \subset \sigma}  (-1)^{\dim \tau}.$$
If $\sigma$ is a bounded face of $X$, then $\sum_{\tau \subset \sigma}  (-1)^{\dim \tau} = 1$. If $\sigma$ is an unbounded face of $X$ then $\sum_{\tau \subset \sigma}  (-1)^{\dim \tau} = 0$. Therefore, the sum in Equation (\ref{eqn:EulerBM})
becomes
$$
 \chi (C^{BM}_{\bullet}(X; \F_p))  = \sum_{\substack{\tau \in X \\ \tau \text{ bounded}}} (-1)^{n-\dim \tau} \rank \F_{n-p}(\tau).$$
 For a face $\tau$ of dimension $q$ we have 
 \begin{align*}
  \sum_{p = 0}^n (-1)^p \rank \F_{n-p}(\tau)y^p  & =   (-1)^n y^n \chi_ \tau(\frac{1}{y})  \\
    &  =   y^{-1} (y - 1) ^q [(y-1)^{n+1-q} - (-1)^{n+1-q}],
   \end{align*} 
 where $ \chi_ \tau$ is the polynomial from Corollary \ref{cor:EPpoly}. 
By comparing this with Equation (\ref{eqn:chiyX}) we obtain 
 $$\chi_y(X_{\C}) =  \sum_{p = 0}^{n} (-1)^p \chi (C^{BM}_{\bullet}(X;  \mathcal{F}^X_p) ) y^p,$$
 and the proof of the theorem is complete. 
\end{proof}


\begin{proof}[Proof of Corollary \ref{cor:hodgeZtrop}]
By combining Proposition \ref{prop:torichomo} with  the Lefschetz hyperplane section theorems for  tropical homology and the homology of complex hypersurfaces of toric varieties, for $p + q < n$, we have 
\begin{equation}\label{eq:hodgebetti}
\rank H_{q}(X; \mathcal{F}_p)  = \rank H_{q}(Y; \mathcal{F}^Y_p)  = h^{p,q}(Y_{\C}) = h^{p,q}(X_{\C}).
\end{equation}
The above equations combined with the Poincar\'e duality statements for all of $X, Y, X_{\C}$  and $Y_{\C}$ establishes the same equalities when $p + q > n$. 

Therefore, it only remains to prove the statement when $q = n-p$. 
It follows from the tropical and complex versions of Lefschetz theorems and from Proposition \ref{prop:torichomo} that
$$\chi(C^{BM}_{\bullet}(X; \mathcal{F}^X_p)) = (-1)^p \rank H_{p}(Y; \mathcal{F}^Y_p) + (-1)^{n-p} \rank H_{n-p}(X; \mathcal{F}^X_p),$$
and 
$$
\sum_q e_c^{p,q}(X_{\C}) = \dim H^{p,p}(Y_{\C}) + (-1)^{n} \dim H^{p, n-p}(X_{\C})
$$
for $p\neq \frac{n}{2}$.

For $p=\frac{n}{2}$, we get
$$\chi(C^{BM}_{\bullet}(X; \mathcal{F}^X_\frac{n}{2})) = (-1)^{\frac{n}{2}} \rank H_{\frac{n}{2}}(X; \mathcal{F}^X_\frac{n}{2}),$$
and 
$$
\sum_q e_c^{\frac{n}{2},q}(X_{\C}) = \dim H^{\frac{n}{2},\frac{n}{2}}(X_{\C}).
$$

Again by Proposition \ref{prop:torichomo} for  toric varieties we have 
$$  \rank H_{p}(Y; \mathcal{F}^Y_p)  = \dim H^{p,p}(Y_{\C}).$$
The statement of the corollary follows after applying Theorem \ref{thm:Epoly}. 
\end{proof}



\begin{proof}[Proof of Corollary \ref{cor:affine}]
It follows from \cite[Theorem 3.6]{CMM} that if $p\neq q$ or $k\neq 2p$, then
$h^{p,q}(H^k_c(Y_\C))=0$. Therefore,  if $p\neq q$, then  $e^{p,q}(Y_\C)=0$ and when $p = q$ we have
$$
e_c^{p,p}(Y_\C)=h^{p,p}(H^{2p}_c(Y_\C)).
$$
From the proof of Corollary \ref{cor:torsionfreequasiproj}, we also have  $H^{BM}_q(Y; \F_p)= 0$ if $p\neq q$. The equality  in Theorem \ref{thm:Epoly} also holds if we replace $X$ and $X_{\C}$ with non-singular toric varieties $Y$ and $Y_{\C}$. This is because it holds for $(\C^*)^{k}$ and the Euler characteristic of the Borel-Moore complexes and the $\chi_y$ genus are both additive. Therefore, we obtain 
$$ \rank H^{BM}_{p}(Y;  \mathcal{F}^Y_p) =   h^{p,p}(H^{2p}_c(Y_\C)).$$
Notice that since $Y_\C$ is affine, the Andreotti-Frankel theorem imply that
$h^{p,p}(H^{2p}_c(Y_\C))=0$ if $2p<n$, and thus $\rank H^{BM}_{p}(Y;  \mathcal{F}^Y_p) =  0$ if $2p<n$.
Combining the tropical  Lefschetz theorem and Poincar\'e duality, we obtain that if $p+q\neq n$ 
$$
\rank H_q^{BM}(X;\F_p^X)= 
\begin{cases}
\rank H_{p+1}^{BM}(Y;\F_{p+1}^Y) &  \text{ if } p=q>\frac{n}{2}, \\
0  & \text{ otherwise}. 
\end{cases}
$$
Since $X_\C$ is affine, one has again that
$h^{p,q}(H^k_c(X_\C)) = 0$
if $k<n$. By the Lefschetz-type theorems for the Hodge Deligne numbers on $H^n_c(X_\C)$ \cite[Section 3]{DanilovKhovansky}, we get $h^{p,q}(H^k_c(X_\C)) = 0$ if $k>n$ and $p\neq q$ 
and that if $2p>n$
$$h^{p,p}(H^{2p}_c(X_\C)) = h^{p+1,p+1}(H^{2p+2}_c(Y_\C)).
$$
Therefore, 
$$
e_c^{p,q}(X_\C)=
\begin{cases}
(-1)^nh^{p,q}(H^n_c(X_\C)) \text{ if } p+q\leq n \\
h^{p+1,p+1}(H^{2p+2}_c(Y_\C)) \text{ if } p=q>\frac{n}{2} \\
0 \text{ otherwise}.
\end{cases}
$$
Then by applying Theorem \ref{thm:Epoly} and using  the fact that the Borel-Moore tropical homology groups of $X$ are torsion free by Corollary \ref{cor:torsionfreequasiproj},   we obtain the statement of corollary.
\end{proof}

\begin{proof}[Proof of Corollary \ref{cor:torus}]
The proof follows exactly the same lines as the proof of Corollary \ref{cor:affine}. It follows from \cite{DanilovKhovansky} that 
$$
h^{p,q}(H^k_c((\C^*)^{n+1}))=
\begin{cases}
\dbinom{n+1}{p} \text{ if } p=q \text{ and } k=n+1+p \\
0 \text{ otherwise }.
\end{cases}
$$ 
The Borel-Moore tropical homology groups satisfy $H^{BM}_q(\R^{n+1};\F_p)=0$ if $q\neq n+1$ and $$\rank H^{BM}_{n+1}(\R^{n+1};\F_p)={\dbinom{n+1}{p}}.$$ 
Combining Theorem \ref{thm:lef} and Poincar\'e duality for the tropical homology of $X$, when $p+q \neq n$ we have 
$$
\rank H_q^{BM}(X;\F_p^X)= 
\begin{cases}
{\binom{n+1}{p+1}} &  \text{ if } q = n, \\
0  & \text{ if } q \neq n.
\end{cases}
$$
The hypersurface $X_\C$ is a non-singular affine variety, so the Andreotti-Frankel  theorem and Poincar\'e duality  imply   $H^k_c(X_\C)=0$ if $k<n$.
By the Lefschetz-type theorems for the Hodge Deligne numbers on $H^n_c(X_\C)$ \cite[Section 3]{DanilovKhovansky}, if $k>n$ one has
$$h^{p,q}(H^k_c(X_\C)) = 
\begin{cases}
\dbinom{n+1}{p+1} &  \text{ if } p = q \text{ and } k = n+p \\
0 & \text{ otherwise.}
\end{cases}
$$
Therefore
$$e^{p,q}_c(X_\C)
=
\begin{cases}
(-1)^nh^{p,q}(H^n_c(X_\C)) &  \text{ if } p+q\leq n \text{ and } p\neq q \\
(-1)^nh^{p,q}(H^n_c(X_\C)) + (-1)^{n+p}{\dbinom{n+1}{p+1}} &  \text{ if } p+q\leq n \text{ and } p=q \\ 
(-1)^{n+p}{\dbinom{n+1}{p+1}} & \text{ if } p+q>n \text{ and } p=q\\
0 & \text{ otherwise}.
\end{cases}
$$
Then by applying Theorem \ref{thm:Epoly} and using  the fact that the Borel-Moore tropical homology groups of $X$ are torsion free by Corollary \ref{torsionfreeRn},  we obtain the statement of corollary.
\end{proof}

\bibliographystyle{alpha}
\bibliography{biblio}

\end{document}